\newcommand{\subjclass}[2][2010]{%
  \let\@oldtitle\@title%
  \gdef\@title{\@oldtitle\footnotetext{#1 \emph{Mathematics subject classification.} #2}}%
}
\newcommand{\keywords}[1]{%
  \let\@@oldtitle\@title%
  \gdef\@title{\@@oldtitle\footnotetext{\emph{Key words and phrases.} #1.}}%
}
\title{Coagulation-Fragmentation equations with multiplicative coagulation kernel and constant fragmentation kernel}
\begin{document}

\author[1]{Hung V. Tran}
\author[2]{Truong-Son Van}
\affil[1]{Department of Mathematics,
University of Wisconsin Madison,
Van Vleck Hall, 480 Lincoln Drive, Madison, WI 53706}
\affil[2]{Department of Mathematical Sciences and
Center for Nonlinear Analysis,
Carnegie Mellon University, Pittsburgh, PA 15213}

\date{\today}

\keywords{critical Coagulation-Fragmentation equations; singular Hamilton-Jacobi equations; regularity; wellposedness; Bernstein functions; long-time behaviors; gelation; mass-conserving solutions; viscosity solutions.}

\subjclass[2010]{ 35B65, 35F21, 35Q99, 44A10, 45J05.}

\maketitle
\begin{abstract}
    We study a critical case of Coagulation-Fragmentation equations with multiplicative coagulation kernel and constant fragmentation kernel.
    Our method is based on the study of viscosity solutions to a new singular Hamilton-Jacobi equation, which results from applying the Bernstein transform to the original Coagulation-Fragmentation equation.
    Our results include wellposedness, regularity, and long-time behaviors of viscosity solutions to the Hamilton-Jacobi equation in certain regimes, which have implications to wellposedness and long-time behaviors of \emph{mass-conserving} solutions to the Coagulation-Fragmentation equation.
\end{abstract}

\tableofcontents
%auto-ignore

\section{Introduction}
The Coagulation-Fragmentation equation (C-F) is an integrodifferential equation that finds applications in many different fields, ranging from astronomy to polymerization to the study of animal group sizes. 
The equation, with pure coagulation, dates back to Smoluchowski~\cite{Smoluchowski16}, when he studied the evolution of number density of particles as they coagulate.
Later on, Blatz and Tobolsky~\cite{BlatzTobolsky45} use the full C-F to study polymerization-depolymerization phenomena.
The mathematical studies of this equation did not start until the work of Melzak~\cite{Melzak57}, which was concerned with existence and uniqueness of the solutions for bounded kernels. 
Since then, although there are still a lot of open questions remained, major advancements have been made by both analytic and probabilistic tools.
We list here some, but not exhaustive, important works that are relevant to our work.
For existence and uniqueness of solutions, there are the works of McLeod~\cite{McLeod62}, Ball and Carr~\cite{BallCarr90}, Norris~\cite{Norris99}, Escobedo, Lauren\c{c}ot, Mischler and Perthame~\cite{EscobedoMischlerPerthame2002, EscobedoLaurencotEA03}.  
For large time behavior of solutions, there are the works of Aizenman and Bak~\cite{AizenmanBak79}, Ca\~{n}izo~\cite{Canizo07}, Carr~\cite{Carr92}, Menon and Pego~\cite{MenonPego04, MenonPego06, MenonPego08}, Degond, Liu and Pego~\cite{DegondLiuEA17}, Liu, Niethammer and Pego~\cite{LiuNiethammerPego19}, Niethammer and Vel\'{a}zquez~\cite{NiethammerVelazquez13} and Lauren\c{c}ot~\cite{LAURENCOT2019}.
For surveys of what has been done, we refer the readers to two dated by now but still excellent surveys by Aldous~\cite{Aldous99} and da Costa~\cite{Costa15} and the new monographs by Banasiak, Lamb, and Lauren\c{c}ot~\cite{BanasiakLambEA19}.  

Here, coagulation represents binary merging when two clusters of particles meet, which happens at some pre-determined rates; and fragmentation represents binary splitting of a cluster, also at some pre-determined rates. 
Thus, the C-F describes the evolution of cluster sizes over time given that there are only coagulation and fragmentation that govern the dynamics.

A particularly interesting phenomenon of the C-F is that
 given the right conditions, the solution, while still physical, does not conserve mass at all time.
 There are two ways that this could happen. One comes from the formation of particles of infinite size; the other comes from the formation of particles of size zero, both in finite time. 
 The first, called \emph{gelation}, happens when the coagulation is strong enough~\cite{EscobedoLaurencotEA03}.
 The latter, called \emph{dust formation}, happens when the fragmentation is strong enough (see Bertoin~\cite{Bertoin06}). 
 Typically, these phenomena happen depending on the relative strengths between the coagulation kernel and fragmentation kernel, not so much on the initial data. However, there are borderline situations, where it is not very clear how solutions would behave, hence more careful analysis needs to be done based on initial data.
 Both are very interesting and rich phenomena, and have been studied in various contexts.

  The main goal of this article is to propose a new framework to analyze a borderline situation described by Escobedo, Lauren\c{c}ot, Mischler and Perthame~\cite{EscobedoMischlerPerthame2002, EscobedoLaurencotEA03}, where solutions to the C-F may or may not exhibit gelation, depending on the initial data (as opposed to the type of kernels). 
 In particular, we analyze the properties of viscosity solutions of a new singular Hamilton-Jacobi equation (H-J), which results from transforming the C-F equation via the so-called Bernstein transform.
 This, in our opinion, is natural and elegant since it requires very minimal assumptions.

 We note that, the Bernstein transform was first used to analyze this type of equations by Menon and Pego~\cite{MenonPego04} (under the name ``desingularized Laplace transform''). This transform is a generalization of the Laplace transform and has properties that fit well with properties of solutions of C-F.

 \subsection{The Coagulation-Fragmentation equation}
 Let $c(s,t) \geq 0$ be the density of clusters of particles of size $s \geq 0$ at time $t\geq 0$.
 We write the (continuous) Coagulation-Fragmentation equation as following
\begin{equation}\label{eq:CF}
\partial_t c(s,t) = Q_c(c) + Q_f(c)\,.
\end{equation}
Here, the coagulation term  $Q_c$ and the fragmentation term $Q_f$ are given by
\[
Q_c(c)(s,t)=\frac{1}{2} \int_0^s a(y,s-y)c(y,t) c(s-y,t)\,dy - c(s,t) \int_0^\infty a(s,y) c(y,t)\,dy\,,
\]
and
\[
Q_f(c)(s,t)=-\frac{1}{2} c(s,t) \int_0^s b(s-y,y)\,dy + \int_0^\infty b(s,y) c(y+s,t)\,dy\,.
\]
In the above, $a, b$ denote the coagulation kernel and fragmentation kernel, respectively, which are nonnegative and symmetric functions defined on $(0,\infty)^2$.

Throughout this paper, we always assume that
\[
a(s,\hat s) = s\hat s \quad \text{ and } \quad
b(s,\hat s) = 1
\quad \text{ for all } s, \hat s>0\,.
\]

\subsection{The Bernstein transform}
We take a weak form of the coagulation-fragmentation equation. 
We say that $c(s,t)$ is a solution to the coagulation-fragmentation equation \eqref{eq:CF} if
for every test function $\phi \in BC([0,\infty)) \cap \rm{Lip}([0,\infty))$ with $\phi(0)=0$,
we have
\begin{equation}\label{weak sln}
  \begin{split}
    \frac{d}{dt} \int_0^\infty \phi(s) c(s,t) \, ds &= \frac{1}{2} \int_0^\infty \int_0^\infty (\phi(s + \hat s) - \phi(s) - \phi(\hat s)) s c(s,t) \hat s c(\hat s,t) \, d\hat s ds \\ 
    & \quad -\frac{1}{2} \int_0^\infty \paren[\Big]{ \int_0^s (\phi(s) - \phi(\hat s) - \phi(s-\hat s)) \,d\hat s  } c(s,t) \, ds \,.
  \end{split}
\end{equation}
Here, $BC([0,\infty))$ is the class of bounded continuous functions on $[0,\infty)$, and $\rm{Lip}([0,\infty))$ is the class of Lipschitz continuous functions on $[0,\infty)$.
Consider the Bernstein transform of $c$, for $(x,t)\in [0,\infty)^2$,
\begin{equation*}
  F(x,t) \defeq \int_0^\infty (1 - e^{-sx}) c(s,t) \, ds 
\end{equation*}
and let
\begin{equation*}
  \phi_x (s) = 1 - e^{-sx}  \,,
\end{equation*}
we have

\begin{align*}
  \partial_t F(x,t) &= \frac{1}{2} \int_0^\infty \int_0^\infty ( 1 - e^{-(s + \hat s) x} - 1 + e^{-sx} -1 + e^{-\hat s x} ) s c(s,t) \hat c(\hat s, t) \, d\hat s ds \\
  & - \frac{1}{2} \int_0^\infty \int_0^s ( 1 - e^{-sx} - 1 + e^{-(s - \hat s)x} - 1 + e^{- \hat s x} ) \, d \hat s \, c(s,t) \, ds \\ 
  &=  -\frac{1}{2} \int_0^\infty \int_0^\infty (1 - e^{-sx})(1 - e^{-\hat s x}) s c(s,t) \hat s c(\hat s,t) \, d\hat s ds \\
  & - \frac{1}{2} \int_0^\infty ( - s - s e^{-sx} + \frac{2}{x}(1  - e^{-sx}) ) c(s,t) \, ds  \\
  &= - \frac{1}{2}( m_1(t) - \partial_x F(x,t))^2 + \frac{m_1(t)}{2} + \frac{ \partial_x F(x,t)}{2} - \frac{F(x,t)}{x} \\
  &= - \frac{1}{2} ( m_1(t) - \partial_x F(x,t)) ( m_1(t) - \partial_x F(x,t) + 1) - \frac{F(x,t)}{x} + m_1(t) \,.
\end{align*}
Here, $m_1(t)$ is the total mass (first moment) of all particles at time $t \geq 0$, that is,
\[
m_1(t) = \int_0^\infty sc(s,t)\,ds\,.
\]
Let us assume that $m_1(t)<\infty$ for all $t \geq 0$.
The key point is to transform a seemingly hopeless nonlocal equation to a somewhat more tractable nonlinear PDE, which enjoys
some major developments in the past few decades.
If conservation of mass holds, then we can assume $m_1(t)=m >0$ for all $t\geq 0$ for some $m\in (0,\infty)$.
This fact, together with the above computations, leads to the following PDE for $F$.
\begin{subequations} \label{e:main}  
  \begin{gather}
    \partial_t F  + \frac{1}{2}(\partial_x F - m)(\partial_x F - m -1) + \frac{F}{x} - m = 0 \quad \text{ in } (0,\infty)^2 \,, \\
    0 \leq F(x,t) \leq mx \quad \text{ on } [0,\infty)^2 \,, \label{ine:sublinear} \\
    F(x,0) = F_0(x) \quad \text{ on } [0,\infty) \,. 
  \end{gather}
\end{subequations}
One then can study wellposedness and properties of solutions of \eqref{e:main} to deduce back information of C-F. Indeed,
this is our main goal.

Note that the condition~\eqref{ine:sublinear} implies that $F(0,t) = 0$ and that it comes directly from the Bernstein transform. 
Indeed, as $c \geq 0$, it is clear that $F \geq 0$.
Besides, the inequality $1- e^{-sx} \leq sx$ for $s,x \geq 0$ gives
\[
F(x,t) = \int_0^\infty (1 - e^{-sx}) c(s,t) \, ds \leq \int_0^\infty sx c(s,t) \, ds =mx\, .
\]
Moreover, the dominated convergence theorem gives
\[
\lim_{x\to \infty} \frac{F(x,t)}{x} = \lim_{x\to \infty} \int_0^\infty \frac{1 - e^{-sx}}{x} c(s,t) \, ds =0\, ,
\]
which means that $F(x,t)$ is sublinear in $x$.
Here, for a given function $\psi:[0,\infty) \to \R$, we say that it is sublinear if
\[
\lim_{x\to \infty} \frac{\psi(x)}{x} = 0.
\]
It is therefore natural to search for solutions of \eqref{e:main} that are sublinear in $x$.

\medskip

It is worth noting that \eqref{e:main} is a Hamilton-Jacobi equation with the Hamiltonian
\[
H(p,z,x) = \frac{1}{2}(p-m)(p-m-1) + \frac{z}{x} -m \quad \text{ for all } (p,z,x) \in \R \times \R \times (0,\infty)\, ,
\]
which is of course singular at $x=0$. 
Besides, $H$ is monotone, but not Lipschitz in $z$ as
\[
\partial_z H(p,z,x)=\frac{1}{x} \geq 0 \quad \text{ and } \quad
\lim_{x \to 0+} \partial_z H(p,z,x) = \lim_{x \to 0+} \frac{1}{x} = +\infty\,.
\]
This means that \eqref{e:main} does not fall into the classical theory of viscosity solutions to Hamilton-Jacobi equations developed by Crandall and Lions \cite{CrandallLions83} (see also Crandall, Evans and Lions \cite{CrandallEvansLions84}).
It is thus our purpose to develop a framework to study wellposedness and further properties of solutions to \eqref{e:main}.
For a different class of Hamilton-Jacobi equations that is singular in $p$ (but not in $z$), see the radially symmetric setting in Giga, Mitake and Tran \cite{GigaMitakeTran17}.

We emphasize that for wellposedness and regularity results, we do not need to impose all the properties of the Bernstein transform of the initial data $c_0=c(\cdot,0)$. To be precise, a Bernstein transform of a measure is a $C^\infty((0,\infty))$ (in fact, analytic) function. However, we only assume $F_0$ to be Lipschitz and sublinear for our wellposedness result and more regular for our regularity results.

A more important point is that our assumption on $c_0$ is minimal. For existence and uniqueness results, we do not have any restrictions on moments of $c_0$ except finite first moment so that the derivative of the Bernstein transform makes sense. 
In particular, we  only require
\begin{equation*}
    m_1(0) = \int_0^\infty s c_0(s) \, ds <\infty \,.
\end{equation*}
This also makes physical sense since one often wishes that the initial total mass to be finite before talking about conservation of mass.
Of course, we will need to put in more conditions for our regularity results.

\begin{remark}
In fact, we are also able to define weak solutions in the measure sense to \eqref{eq:CF} in a similar fashion.

\medskip

For each $t \geq 0$, let $c_t(ds)$ be a positive Radon measure in $(0,\infty)$.
Then, we say that $c_t(ds)$ is a weak solution in the measure sense to \eqref{eq:CF} if for every test function $\phi \in BC([0,\infty)) \cap \rm{Lip}([0,\infty))$ with $\phi(0)=0$,
we have
\begin{equation*}
  \begin{split}
    \frac{d}{dt} \int_0^\infty \phi(s) \, c_t(ds) &= \frac{1}{2} \int_0^\infty \int_0^\infty (\phi(s + \hat s) - \phi(s) - \phi(\hat s)) s \hat s  \, c_t(ds)c_t(d\hat s) \\ 
    & \quad -\frac{1}{2} \int_0^\infty \paren[\Big]{ \int_0^s (\phi(s) - \phi(\hat s) - \phi(s-\hat s)) \,d\hat s  }  \, c_t(ds) \,.
  \end{split}
\end{equation*}
This is clearly a weaker notion of solutions than that in \eqref{weak sln}. Nevertheless, the Bernstein transform of $c_t(ds)$ and \eqref{e:main} still make perfect sense. We will use this notion of solutions when talking about the existence results for the C-F.
\end{remark}

\subsection{A conjecture}
In \cite{EscobedoMischlerPerthame2002, EscobedoLaurencotEA03}, the authors conjectured that in borderline situations where coagulation kernel and fragmentation kernel balance each other out, the solution will conserve mass if the initial data have small enough total mass. Otherwise, for large total mass initial data, gelation will occur. In the paper by Vigil and Ziff~\cite{VigilZiff89}, the authors argued that if the zeroth moment of the solution reaches negative value in finite time, one expects coagulation to dominate, hence gelation will occur. 

It has been expected by experts in the field that for our specific kernels, the critical initial mass should be $m_1(0) =1$ so that for $m_1(0) >1$, one has gelation; and for $m_1(0)\leq 1$, one has solutions that conserve mass. We give here a simple reason why such expectation arises.

Integrating equation~\eqref{eq:CF} and denoting $m_0(t) = \int_0^\infty c(s,t) \,ds$, the zeroth moment, we get the following equation

\begin{equation*}
    \frac{d}{dt} m_0(t) =  \frac{1}{2}m_1(t)( 1 - m_1(t)) \,.
\end{equation*}
Suppose now $m_1(t) = m_1(0) >1$ as it is true before gelation occurs (if ever). Then $m_0(t)$ will be negative in finite time.
On the other hand, $m_0(t)$ remains positive if $0\leq m_1(0) \leq 1$. Therefore, by the reasoning above, $m_1(0) = 1$ is believed to be the critical mass.
Our goal is to give  results towards resolving this conjecture, which will be detailed in the next subsection.

\subsection{Main results}
%Describe main results.
%Note that for many results, we only need to assume that $F_0$ is sublinear and satisfies certain regularity conditions.
In this subsection, we give rigorous statements about our results, which we believe to be the stepping stones for further investigations in the future, both in the theory of viscosity solutions and in the theory of C-F.

First and foremost, we need to understand the existence and uniqueness of viscosity solutions for equation~\eqref{e:main}.

\begin{theorem}\label{thm:wellposedness} 
Assume that $0 <m \leq 1$.
Assume further that $F_0$ is Lipschitz, sublinear, and $ 0 \leq F_0(x) \leq mx$.
Then, \eqref{e:main} has a unique Lipschitz, sublinear solution $F$.
\end{theorem}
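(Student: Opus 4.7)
The plan is to prove uniqueness via a comparison principle designed around the singular term $F/x$, construct the solution by Perron's method between the two natural barriers of the equation, and finally propagate the Lipschitz and sublinear bounds from the initial data to the solution.

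\textbf{Barriers.} I would first observe that $\underline F(x,t)\equiv 0$ is a subsolution and $\overline F(x,t) = mx$ is an exact (hence super) solution of \eqref{e:main} whenever $0<m\leq 1$: plugging $\underline F\equiv 0$ yields $\tfrac12(-m)(-m-1)-m = \tfrac{m(m-1)}{2}\leq 0$, while $\overline F = mx$ makes every term in the equation vanish identically. The condition $F\leq mx$ therefore propagates and, crucially, forces $F(0,t)=0$ and gives the a priori bound $F/x\leq m$. This bound is the key structural ingredient that tames the singular coefficient $\partial_z H = 1/x$ of the Hamiltonian and is the starting point for every subsequent estimate.

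\textbf{Comparison.} For a Lipschitz sublinear subsolution $u$ and a Lipschitz sublinear supersolution $v$ satisfying $0\leq u,v\leq mx$ and $u(\cdot,0)\leq v(\cdot,0)$, I would establish $u\leq v$ by doubling variables with an auxiliary penalty chosen to control both ends of $(0,\infty)$. Concretely, for small $\varepsilon,\eta,\delta,\alpha>0$ consider
\[
\Phi(x,y,t) = u(x,t) - v(y,t) - \frac{(x-y)^2}{2\varepsilon} - \eta\!\left(\frac{1}{x}+\frac{1}{y}\right) - \delta(x+y) - \alpha t
\]
on $(0,\infty)^2\times[0,T]$. The term $\eta(1/x+1/y)$ keeps any maximizer $(x_\varepsilon,y_\varepsilon,t_\varepsilon)$ uniformly away from the singularity, while $\delta(x+y)$ together with the sublinearity of $u,v$ keeps it bounded at infinity. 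Subtracting the viscosity inequalities, the singular contribution
\[
\frac{u(x_\varepsilon,t_\varepsilon)}{x_\varepsilon}-\frac{v(y_\varepsilon,t_\varepsilon)}{y_\varepsilon} = \frac{u(x_\varepsilon,t_\varepsilon)-v(y_\varepsilon,t_\varepsilon)}{x_\varepsilon} + v(y_\varepsilon,t_\varepsilon)\,\frac{y_\varepsilon-x_\varepsilon}{x_\varepsilon y_\varepsilon}
\]
has a favorable sign in its first piece at a positive maximum, while the second piece is $O(m|y_\varepsilon-x_\varepsilon|/x_\varepsilon)\to 0$ as $\varepsilon\to 0$, using $v\leq my$ and the standard penalty bound $(x_\varepsilon-y_\varepsilon)^2/\varepsilon\to 0$. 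The quadratic difference in the Hamiltonian is handled by the usual convex-combination estimate, since the matched penalty gradients $p,q\in[0,L]$ stay bounded by the common Lipschitz constant. Sending $\varepsilon\to 0$, then $\eta,\delta\to 0$, then $\alpha\to 0^+$ yields the contradiction and hence $u\leq v$.

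\textbf{Existence and regularity.} With comparison available, I would construct a solution by Perron's method as the supremum of all subsolutions wedged between $0$ and $mx$, using one-sided barriers of the form $F_0(x)\pm C\,t$ (with $C$ chosen from the Lipschitz constant of $F_0$ and the bound $F_0/x\leq m$) to pin down the initial trace $F(\cdot,0)=F_0$. Lipschitz regularity in $x$ follows by applying the comparison principle to $F(x+h,t)$ and $F(x,t)+L h$ after correcting for the non-translation-invariance of the term $F/x$, which contributes an $O(h)$ defect that is absorbed by using $F/x\leq m$. Lipschitz regularity in $t$ is then automatic from the equation itself, since $|\partial_t F|$ is bounded in terms of $|\partial_x F|$ and $F/x\leq m$. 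Finally, the sublinearity of $F(\cdot,t)$ is preserved because the upper barrier gives $F(x,t)/x\leq F_0(x)/x + Ct/x\to 0$ as $x\to\infty$.

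\textbf{Main obstacle.} I expect the principal difficulty to lie entirely in the comparison step: ensuring the doubling maximum does not run to $x=0$, and quantifying the cancellation in $u/x-v/y$ at that maximum. The Lipschitz-in-$x$ propagation is a secondary technical hurdle precisely because $F/x$ breaks $x$-translation invariance, so the classical shift-and-compare argument must be adapted using the a priori sup bound $F/x\leq m$.
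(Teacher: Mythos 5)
Your comparison argument is genuinely different from the paper's and, with one ingredient made explicit, it works. At the penalized maximum $(x_\varepsilon,y_\varepsilon,t_\varepsilon)$ one has $u(x_\varepsilon,t_\varepsilon)-v(y_\varepsilon,t_\varepsilon)>\sigma/4$ where $\sigma=\sup(u-v)>0$, and combined with $u\leq mx$, $v\geq 0$ this gives $x_\varepsilon\geq \sigma/(4m)$ \emph{uniformly in all penalty parameters}. That is the real reason the contribution $\eta/x_\varepsilon^2$ to the test gradients vanishes as $\eta\to 0$; your assertion that ``the matched penalty gradients $p,q\in[0,L]$ stay bounded by the common Lipschitz constant'' is not quite right as written (they are shifted by $\mp\eta/x_\varepsilon^2$, and it is the vanishing of that shift, not its boundedness, that matters, since $H$ is quadratic in $p$). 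The paper's comparison proof is shorter and exposes the hypothesis $m\leq 1$ transparently: it shows that $v+\frac{1}{n}$ is a supersolution of the cutoff equation \eqref{e:cutoff} (this step uses both $v\leq mx$ and $m\leq 1$), applies classical comparison to the cutoff, and lets $n\to\infty$. Your direct doubling route is arguably more robust but makes the role of $m\leq 1$ less visible.

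There is a genuine gap in your Lipschitz-in-$x$ propagation. Shifting in $x$ is \emph{not} an $O(h)$ perturbation here: writing the equation for $F(\cdot+h,t)$ at the point $x$ produces the defect
\[
\frac{F(x+h,t)}{x}-\frac{F(x+h,t)}{x+h}=\frac{h\,F(x+h,t)}{x(x+h)}\leq \frac{mh}{x},
\]
which is unbounded as $x\to 0^+$ and cannot be absorbed by $F/x\leq m$. Worse, this defect has a \emph{fixed} sign (nonnegative for a $+h$ shift, nonpositive for a $-h$ shift), so $F(\cdot+h,\cdot)$ is always a supersolution and $F(\cdot-h,\cdot)$ always a subsolution; shift-and-compare therefore only delivers the one-sided bound $\partial_x F\geq -L$ and never the upper bound. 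The paper avoids this by inverting your order: first obtain $t$-Lipschitz from the barriers $F_0\pm Ct$ together with the $L^\infty$-contraction property (a $t$-shift, which is exact because the Hamiltonian is autonomous in $t$), then read the $x$-Lipschitz bound off the equation using that $H$ is quadratic (hence coercive) in $p$ and that $F/\phi_n\leq m+1$ is bounded. You should adopt that order; with it, the rest of your plan (Perron between the barriers $0$ and $mx$, sublinearity from $F\leq F_0+Ct$) goes through.
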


The proof of this theorem is given in Section~\ref{S:Wellposedness}.
Theorem \ref{thm:wellposedness} gives us a simple but important implication about C-F.
\begin{corollary}\label{cor:m leq 1}
 Assume that $m_1(0)=m \in (0,1]$.
 Then, equation~\eqref{eq:CF} has at most one mass-conserving solution.
\end{corollary}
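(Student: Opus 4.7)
The plan is to reduce uniqueness for the Coagulation-Fragmentation equation to the wellposedness result for the Hamilton-Jacobi equation in Theorem~\ref{thm:wellposedness}, via the Bernstein transform.

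Suppose $c_1, c_2$ are two mass-conserving solutions of \eqref{eq:CF} sharing the same initial datum $c_0$ with $m_1(0)=m\in(0,1]$. Mass conservation gives $m_1^{(i)}(t)=m$ for all $t\geq 0$ and both $i=1,2$. Define
\[
F_i(x,t) \defeq \int_0^\infty (1-e^{-sx})\,c_i(s,t)\,ds, \qquad i=1,2,
\]
and $F_0(x) \defeq \int_0^\infty (1-e^{-sx}) c_0(s)\,ds$. The computations in the introduction, applied with the test functions $\phi_x(s)=1-e^{-sx}$ (which lie in $BC\cap\mathrm{Lip}$ with $\phi_x(0)=0$), show that $F_i$ satisfies \eqref{e:main} with initial datum $F_0$.

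The first task is to check that $F_0$ meets the hypotheses of Theorem~\ref{thm:wellposedness}: nonnegativity and $F_0(x)\leq mx$ follow from the very estimate $1-e^{-sx}\leq sx$ that is already recorded in the excerpt; sublinearity follows from dominated convergence (also recorded in the excerpt); and Lipschitzness with constant $m$ follows from $\partial_x F_0(x) = \int_0^\infty s e^{-sx} c_0(s)\,ds \leq m$. The second task is to argue that $F_i$ is a genuine viscosity solution of \eqref{e:main}, not merely a distributional one; I would verify that each $F_i(\cdot,t)$ is Lipschitz with constant $m$ uniformly in $t$ (same computation as for $F_0$, using mass conservation), that $F_i$ is sublinear, that $0\leq F_i(x,t)\leq mx$, and then interpret the classical pointwise identity derived in the introduction in the viscosity sense — since $F_i$ is Lipschitz in $x$, the computation from the weak formulation goes through for almost every $(x,t)$ and upgrades to the viscosity sense by standard arguments.

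With both $F_1$ and $F_2$ identified as Lipschitz, sublinear viscosity solutions of \eqref{e:main} with common datum $F_0$, Theorem~\ref{thm:wellposedness} yields $F_1\equiv F_2$. The final step is to invert the Bernstein transform: since
\[
F_i(x,t) = \int_0^\infty (1-e^{-sx})\,c_i(s,t)\,ds,
\]
one has $\partial_x F_i(x,t) = \int_0^\infty s e^{-sx} c_i(s,t)\,ds$, which is the Laplace transform of the finite measure $s\,c_i(s,t)\,ds$. Injectivity of the Laplace transform on finite measures then forces $c_1(\cdot,t)=c_2(\cdot,t)$ for every $t\geq 0$.

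The main obstacle is the middle step: verifying carefully that the $F_i$ derived from measure/weak solutions of \eqref{eq:CF} qualify as viscosity solutions of the singular Hamilton-Jacobi equation \eqref{e:main} in the precise sense used by Theorem~\ref{thm:wellposedness}, especially regarding behavior near the singularity at $x=0$ and the uniform-in-$t$ Lipschitz bound needed to invoke uniqueness. Once that technical bridge is in place, the rest is a direct application of Theorem~\ref{thm:wellposedness} and Laplace injectivity.
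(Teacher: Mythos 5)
Your proposal is correct and follows essentially the same route as the paper: Bernstein-transform both mass-conserving solutions, observe that their transforms solve \eqref{e:main} with the same initial datum, invoke the uniqueness in Theorem~\ref{thm:wellposedness}, and then invert the transform via Laplace injectivity. The ``main obstacle'' you flag at the end dissolves once you note that the Bernstein transform of a finite-first-moment measure is $C^\infty$ in $x$ on $(0,\infty)$ and $C^1$ up to $x=0$ (indeed $\partial_x F_i(x,t)=\int_0^\infty s e^{-sx}c_i(s,t)\,ds \to m$ as $x\to 0^+$), so each $F_i$ is a \emph{classical} solution of \eqref{e:main} and hence automatically a viscosity solution; there is no need to work ``almost everywhere'' and then upgrade, and the Lipschitz constant $m$ and the bound $0\leq F_i\leq mx$ come for free from mass conservation — this is precisely how the paper disposes of the step.
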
 

We believe that the uniqueness result of Corollary \ref{cor:m leq 1} is new in the literature although existence results of mass-conserving solutions for \eqref{eq:CF} for the whole range of $m_1(0) \in (0,1]$ are still not yet available.
In a recent important work, Lauren\c{c}ot \cite{LAURENCOT2019-2} showed existence and uniqueness of mass-conserving solutions to  \eqref{eq:CF} under some additional moment conditions for $0<m_1(0) < \frac{1}{4 \log 2}$.
In Theorem \ref{thm: existence of solution to C-F} below, we obtain existence (and of course uniqueness) of mass-conserving weak solutions in the measure sense to \eqref{eq:CF} in case that $0<m_1(0)<\frac{1}{2}$, and $c(\cdot,0)$ has bounded second moment.

 \medskip
 
We note that, in general, if the viscosity solution to the Hamilton-Jacobi equation forms shocks, one cannot have a solution of C-F that conserves mass anymore. This is because if there were a solution of C-F that conserves mass, its Bernstein transform would need to solve the Hamilton-Jacobi equation and at the same time would need to be smooth. This cannot be the case if there were shocks.

It is, therefore, of our interest to study the regularity of the viscosity solutions of the equation~\eqref{e:main}.
Moreover, regularity results in the theory of viscosity solutions are important in their own rights.

\begin{theorem}\label{thm:no C1 sln}
  Suppose $m >1$. 
  Assume that $F_0$ is smooth, sublinear, and $ 0 \leq F_0(x) \leq mx$.
  Then equation~\eqref{e:main} does NOT admit a solution  $F \in C^1([0,\infty)^2)$ which is sublinear in $x$.
\end{theorem}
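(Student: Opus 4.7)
The plan is by contradiction: suppose $F \in C^1([0,\infty)^2)$ is a sublinear solution of \eqref{e:main}. The constraint $0 \le F \le mx$ forces $F(0,t) = 0$ for every $t$, hence $\partial_t F(0,t)=0$, while $C^1$ regularity gives $F(x,t)/x \to \partial_x F(0,t)$ as $x \to 0^+$. Passing $x \to 0^+$ in \eqref{e:main} therefore yields the algebraic identity
\[
\frac{1}{2}\bigl(\partial_x F(0,t)-m\bigr)\bigl(\partial_x F(0,t)-m-1\bigr) + \partial_x F(0,t) - m = 0,
\]
which factors as $(\partial_x F(0,t)-m)(\partial_x F(0,t)-m+1)=0$. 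Hence $\partial_x F(0,t) \in \{m, m-1\}$, and by continuity in $t$ this value is constant, so in particular $\partial_x F_0(0) \in \{m, m-1\}$. If $\partial_x F_0(0) \notin \{m, m-1\}$ the contradiction is already achieved, so it remains to handle the two exceptional cases.

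In the main case $\partial_x F(0,t) \equiv m$, set $G(x,t) := mx - F(x,t) \geq 0$ and $H(x,t) := G(x,t)/x$, so that $H(0,t) = 0$, $H(x,t) \to m$ as $x \to \infty$ by sublinearity, and $0 \leq H \leq m$. A direct substitution transforms \eqref{e:main} into
\[
2x\, H_t = (H + xH_x)^2 + (xH_x - H).
\]
Since $m > 1$, for each $t$ the intermediate value theorem produces the smallest level crossing $x_\ast(t) := \inf\{x > 0 : H(x,t) \geq 1\} \in (0,\infty)$, at which $H(x_\ast(t),t)=1$ and $H_x(x_\ast(t),t) \geq 0$. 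The right-hand side of the PDE above at the level $H=1$ collapses to $x_\ast H_x\,(3 + x_\ast H_x)$; the implicit function theorem and the chain rule then give
\[
\dot x_\ast(t) \;=\; -\frac{3 + x_\ast(t)\,H_x(x_\ast(t),t)}{2} \;\leq\; -\frac{3}{2}.
\]
Consequently $x_\ast(t) \leq x_\ast(0) - 3t/2$, forcing $x_\ast(t)=0$ at some finite $t^\ast$, which contradicts $H(0,t^\ast)=0 \neq 1$.

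For the remaining case $\partial_x F(0,t) \equiv m-1$, write $F(x,t) = (m-1)x + \widetilde F(x,t)$; a direct substitution shows that $\widetilde F$ satisfies \eqref{e:main} with $m$ replaced by $1$, while sublinearity of $F$ becomes $\widetilde F/x \to -(m-1)$ as $x \to \infty$, with $\widetilde F(0,t)=0$ and $\partial_x \widetilde F(0,t)=0$. Applying the same level-set strategy to $\widetilde H(x,t) := -\widetilde F(x,t)/x$ (which satisfies $\widetilde H(0,t)=0$ and $\widetilde H(\infty,t) = m-1 > 0$), the analogous computation at any level $c \in (0, m-1)$ yields a differential inequality of the form $\dot x_\ast(t) \leq -3/2$, producing the same finite-time contradiction.

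The principal technical obstacle is the degenerate sub-case in which $H_x$ (or $\widetilde H_x$) vanishes at the selected level crossing, which prevents direct use of the implicit function theorem. The natural remedy is to work with a slightly sub-critical level $\{H = 1-\varepsilon\}$, carry through the above differential inequality uniformly in $\varepsilon$, and pass to the limit $\varepsilon \to 0^+$---or equivalently to argue in a viscosity-subsolution sense, using upper-semicontinuity of the inf-time-of-first-crossing to preserve $\dot x_\ast \leq -3/2$ in the limit.
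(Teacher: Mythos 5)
Your opening step matches the paper exactly: letting $x \to 0^+$ in \eqref{e:main} and using $\partial_x F(0,t) = \lim_{x\to 0^+} F(x,t)/x$ gives $(\partial_x F(0,t)-m)(\partial_x F(0,t)-m+1)=0$, hence $\partial_x F(0,t) \in \{m, m-1\}$. After that point, however, your argument diverges sharply from the paper's and runs into a genuine gap.

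The paper does not split into cases at all; it only uses the lower bound $\partial_x F(0,t) \geq m-1 > 0$. Fixing any $\sigma \in (0,m-1)$, it considers $\varphi(t) := \max_{x\ge 0}\bigl(F(x,t) - \sigma x\bigr)$, which is strictly positive (since $\partial_x F(0,t) > \sigma$) and attained at some $x_t>0$ (by sublinearity). At the maximizer one has $\partial_x F(x_t,t)=\sigma$ and $F(x_t,t)/x_t > \sigma$, whence $\partial_t F(x_t,t) \le -\tfrac12(\sigma-m)(\sigma-m+1) =: -c_0 < 0$, and the envelope inequality $D^-\varphi(t) \le \partial_t F(x_t,t) \le -c_0$ then forces $\varphi$ negative in finite time — contradiction. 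The key point is that one never needs the maximizer $x_t$ to move regularly: the one-sided Dini-derivative estimate for $\varphi$ sidesteps the implicit function theorem entirely.

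Your level-set argument, by contrast, has a hole exactly where you flag the ``principal technical obstacle.'' The implicit function theorem to define and differentiate $x_*(t)$ needs $H_x(x_*(t),t)\neq 0$, and your proposed fix — working at a sub-critical level $H = 1-\varepsilon$ — makes things worse, not better. At level $c = 1-\varepsilon$ one computes
\[
2x H_t = u^2 + (2c+1)\,u - c\varepsilon, \qquad u := x H_x \ge 0,
\]
and the right-hand side is \emph{negative} whenever $0 \le u < u_+(\varepsilon)$, where $u_+(\varepsilon) := \tfrac12\bigl(-(2c+1) + \sqrt{(2c+1)^2+4c\varepsilon}\bigr) > 0$. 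In that regime $H_t < 0$, so $\dot x_* = -H_t/H_x > 0$: the crossing point moves to the \emph{right}. So the inequality $\dot x_*\le -3/2$ is not preserved uniformly in $\varepsilon$, and the proposed limiting argument fails. A ``viscosity-subsolution sense'' version of $\dot x_*$ is also not something you can invoke without an actual definition and comparison principle for that free boundary. In short, your strategy is salvageable only at the exact level $H=1$, but there the IFT genuinely degenerates when $H_x(x_*,t)=0$, and you have supplied no argument for that case.

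A secondary remark: you also need the case $\partial_x F(0,t)\equiv m-1$ (where $H(0,t)=1$ and there is no positive first crossing of $H=1$), forcing the separate rescaling with $\widetilde F$. The paper's $\sigma\in(0,m-1)$ choice handles both boundary alternatives uniformly, so no case split is needed. It would be cleanest to adopt that mechanism: instead of chasing the level set $\{H=1\}$, track the scalar quantity $\varphi(t)=\max_x\bigl(F(x,t)-\sigma x\bigr)$ and use the Dini-derivative estimate, which is robust to non-uniqueness and non-smoothness of the maximizer.
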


The proof of this theorem is given in Subsection~\ref{S:nonExistence}.
Based on our discussion above, Theorem~\ref{thm:no C1 sln} implies immediately the following consequence.

\begin{corollary}\label{cor:m>1}
 Assume that $m_1(0)=m>1$.
 Then, there is no mass-conserving solution to equation~\eqref{eq:CF}.
\end{corollary}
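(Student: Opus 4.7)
The plan is to argue by contradiction and reduce the statement directly to Theorem~\ref{thm:no C1 sln}. Suppose a mass-conserving solution $c(s,t)$ of \eqref{eq:CF} exists with $m_1(0)=m>1$. Then $m_1(t)\equiv m$ for all $t\geq 0$, so the formal computation preceding \eqref{e:main} applies verbatim, and its Bernstein transform
\[
F(x,t) = \int_0^\infty (1 - e^{-sx}) c(s,t)\,ds
\]
is a pointwise (classical) solution of \eqref{e:main} with this constant $m$, is sublinear in $x$, and satisfies $0 \leq F(x,t) \leq mx$, exactly as recorded in the introduction.

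The central step is to promote $F$ to a $C^1$ function on $[0,\infty)^2$. For each $t$, the Bernstein transform of a positive Radon measure with finite first moment is real-analytic on $(0,\infty)$, with
\[
\partial_x F(x,t) = \int_0^\infty s\, e^{-sx} c(s,t)\,ds.
\]
Because $m_1(t)\equiv m<\infty$, dominated convergence shows $\partial_x F$ is continuous up to $x=0$ with $\partial_x F(0,t)=m$. Time regularity and continuity of $\partial_t F$ are then read off from the PDE \eqref{e:main} itself, which expresses $\partial_t F$ as a continuous function of $F$, $\partial_x F$, and $x$. Hence $F\in C^1([0,\infty)^2)$ and is sublinear in $x$. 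This contradicts Theorem~\ref{thm:no C1 sln}, which rules out precisely such an $F$ when $m>1$, and the proof is complete.

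The one technical point I anticipate as the main obstacle is matching the smoothness hypothesis on $F_0$ in Theorem~\ref{thm:no C1 sln}: the argument above only gives that $F_0$ is $C^\infty$ on $(0,\infty)$ and $C^1$ at the origin, while the theorem is phrased for $F_0$ smooth on $[0,\infty)$. The cleanest way to handle this is to observe that the corollary is a non-existence statement, so one may restrict attention to initial data $c_0$ with all moments finite (which makes $F_0$ smooth at the origin automatically); alternatively one restarts from a small time $t_0>0$ at which the Bernstein transform $F(\cdot,t_0)$ is smooth at $0$, using the analytic structure of \eqref{e:main} away from $x=0$ to propagate regularity inward. Either way, no ingredient beyond Theorem~\ref{thm:no C1 sln} is needed.
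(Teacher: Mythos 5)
Your argument is correct and matches the paper's proof: both deduce that the Bernstein transform $F$ of a putative mass-conserving solution is a sublinear classical solution of \eqref{e:main} lying in $C^\infty((0,\infty)^2)\cap C^1([0,\infty)^2)$, and then invoke Theorem~\ref{thm:no C1 sln}. One caveat on your final paragraph: the first workaround as stated (restricting attention to $c_0$ with all moments finite) is logically insufficient, since it would establish non-existence only for that restricted class rather than for every $c_0$ with $m_1(0)>1$; but in fact neither workaround is needed, because the proof of Theorem~\ref{thm:no C1 sln} never uses the smoothness of $F_0$ at all --- it relies solely on $F$ being $C^1$ on $[0,\infty)^2$, sublinear, and a solution of the equation, which you have already established.
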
 
A version of Corollary \ref{cor:m>1} already appeared in \cite{BanasiakLambEA19}.
We here obtain non-existence of mass-conserving solutions under the minimal assumption, that is, $m_1(0)>1$.
We do not need to assume anything else about other moments. In particular, we do not need to impose that the zeroth moment, number of clusters, is finite as in \cite{BanasiakLambEA19}.
It is also worth noting that Corollaries \ref{cor:m leq 1} and \ref{cor:m>1} hold true for mass-conserving weak solutions in the measure sense to \eqref{eq:CF} as well.
\medskip

To study regularity of $F$ for $0<m \leq 1$, we impose more conditions on $F_0$ as following. 
Assume that there exist $\beta \in (0,1)$ and $C>0$ such that
\begin{gather}
  0 \leq F_0'(x) \leq m \quad \text{and} \quad F_0'(0)=m\,, \tag{A1} \label{A:firstDerivativeBound} \\
  -C \leq  F_0''(x)   \leq 0\,, \tag{A2} \label{A:secondDerivativeBound1} \\ 
 -\frac{m}{e} \leq x F_0''(x)  \leq 0 \quad \text{and} \quad \|xF_0''\|_{C^{0,\beta}([0,\infty))} \leq C \,. \tag{A3} \label{A:secondDerivativeBound2} 
\end{gather}

The above assumptions hold true when $F_0$ is the Bernstein transform of $c_0=c(\cdot,0)$, where $c_0$ has $m_1(0)=m$ and also bounded second moment, that is,
\[
m_2(0)=\int_0^\infty s^2 c(s,0)\,ds \leq C\,.
\]
Indeed,
\[
0\leq F_0'(x)=\int_0^\infty s e^{-xs}c(s,0)\,ds \leq m\, ,
\]
and $F_0'(0)=m$.
For second derivative, one has
\[
-C \leq F_0''(x)=-\int_0^\infty s^2 e^{-xs}c(s,0)\,ds \leq 0\,,
\]
and
\[
x F_0''(x)=-\int_0^\infty s^2x e^{-xs}c(s,0)\,ds =-\int_0^\infty (sx e^{-xs})sc(s,0)\,ds \geq -\frac{m}{e}\,.
\]
We use the fact that $re^{-r} \leq e^{-1}$ for $r\ge 0$ in the above.
Besides, for $x,y \in [0,\infty)$,
\begin{align*}
|x F_0''(x) - yF_0''(y)| &\leq \int_0^\infty |x e^{-xs}- y e^{-ys}|s^2c(s,0)\,ds \\
&\leq \int_0^\infty |x-y|s^2c(s,0)\,ds \leq C|x-y|.
\end{align*}
In the second inequality above, we use the point that $|x e^{-xs}- y e^{-ys}| \leq |x-y|$, which can be derived by the usual mean value theorem and 
\[
\left|(ze^{-zs})' \right| = \left| e^{-zs} - zs e^{-zs} \right| \leq 1 \qquad \text{ for all } z\geq 0.
\]

We first show that $F$ is always concave in $x$ provided that \eqref{A:firstDerivativeBound}--\eqref{A:secondDerivativeBound1} hold and $0 <m \leq 1$.

\begin{lemma} \label{lem:Fconcave}
  Assume {\rm \eqref{A:firstDerivativeBound}--\eqref{A:secondDerivativeBound1}}, and $0<m\leq 1$.
  Assume further that $F_0$ is sublinear, and $ 0 \leq F_0(x) \leq mx$.
  Then, the sublinear solution $F$ to the equation~\eqref{e:main} is concave in $x$ for each $t\geq 0$.
 \end{lemma}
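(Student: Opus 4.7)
The plan is to run a coupled maximum-principle argument for $W := F_{xx}$ together with the auxiliary quantity $K := F - xF_x$, on a vanishing-viscosity approximation. Two structural facts drive the proof: the algebraic link $K_x = -xW$ (so $W\le 0$ with $K(0,t)=0$ forces $K\ge 0$), and the inequality $(p-m)(p-m+1)\le 0$ for $0\le p\le m$, which is where the hypothesis $m\le 1$ enters.

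The first step is to mollify $F_0$ to a smooth $F_0^\varepsilon$ still satisfying \eqref{A:firstDerivativeBound}--\eqref{A:secondDerivativeBound1} uniformly in $\varepsilon$, and to let $F^\varepsilon$ solve the parabolic regularization
\[
F^\varepsilon_t + \tfrac{1}{2}(F^\varepsilon_x - m)(F^\varepsilon_x - m - 1) + \frac{F^\varepsilon}{x} - m = \varepsilon F^\varepsilon_{xx}.
\]
Standard vanishing-viscosity stability gives $F^\varepsilon\to F$ locally uniformly, and since concavity passes to pointwise limits, it is enough to establish concavity of each $F^\varepsilon(\cdot,t)$.

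Differentiating the regularized PDE twice in $x$ and writing $W^\varepsilon := F^\varepsilon_{xx}$, $K^\varepsilon := F^\varepsilon - xF^\varepsilon_x$, $p^\varepsilon := F^\varepsilon_x$ yields
\[
W^\varepsilon_t - \varepsilon W^\varepsilon_{xx} + (p^\varepsilon - m - \tfrac{1}{2})W^\varepsilon_x + (W^\varepsilon)^2 + \frac{W^\varepsilon}{x} + \frac{2K^\varepsilon}{x^3} = 0,
\]
\[
K^\varepsilon_t - \varepsilon K^\varepsilon_{xx} + (p^\varepsilon - m - \tfrac{1}{2})K^\varepsilon_x + \frac{2K^\varepsilon}{x} + \tfrac{1}{2}(p^\varepsilon - m)(p^\varepsilon - m + 1) = 2\varepsilon W^\varepsilon.
\]
The plan is to propagate three properties simultaneously by a continuity argument in $t$: \textbf{(i)} $0 \le p^\varepsilon \le m$, \textbf{(ii)} $K^\varepsilon \ge 0$, \textbf{(iii)} $W^\varepsilon \le 0$. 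Granted (i), the reaction term in the $K^\varepsilon$-equation is nonpositive, and a parabolic maximum principle (absorbing the $2\varepsilon W^\varepsilon$ coupling via the concurrent bound (iii)) forbids a negative interior minimum of $K^\varepsilon$. Granted (ii), a positive interior maximum of $W^\varepsilon$ forces $(W^\varepsilon)^2 + W^\varepsilon/x + 2K^\varepsilon/x^3 \le 0$ there, a strict contradiction. Finally, concavity of $F^\varepsilon$ together with $F^\varepsilon(0,t) = 0$ and sublinearity returns $0 \le p^\varepsilon \le m$; the value $p^\varepsilon(0,t) = m$ is propagated from the initial condition by taking $x\to 0^+$ in the PDE, which reduces to $(p^\varepsilon(0,t)-m)(p^\varepsilon(0,t)-m+1) = 0$, and continuity selects the root $p^\varepsilon(0,t)=m$.

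The main obstacle is the singularity at $x = 0$, where the source $\frac{2K^\varepsilon}{x^3}$ is unbounded and the maximum principle must be localized: I would work on $[0,T]\times[\delta,R]$ and pass $\delta\to 0$, $R\to\infty$ using Lipschitz and sublinearity bounds on $F^\varepsilon$ uniform in $\varepsilon$, together with the prescribed boundary data $F^\varepsilon(0,\cdot) = 0$ and $F^\varepsilon_x(0,\cdot) = m$. A secondary nuisance is the viscous cross-coupling $2\varepsilon W^\varepsilon$ in the $K^\varepsilon$-equation, which is absorbed by the strict positive term $-2K^\varepsilon/x$ at a would-be negative minimum for $\varepsilon$ small enough. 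Once concavity of each $F^\varepsilon$ is established, passing $\varepsilon\to 0$ concludes the proof.
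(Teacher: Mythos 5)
Your overall strategy---vanishing viscosity plus a maximum principle for $W=F_{xx}$, with the auxiliary quantity $K=F-xF_x$ and the algebraic relation $K_x=-xW$---is the same skeleton as the paper, and both equations you derive for $W^\varepsilon$ and $K^\varepsilon$ check out. The organization differs, though: the paper proves the gradient bound, the boundary estimate $W^\varepsilon(0,t)\le 0$, and the interior bound as three sequential lemmas, whereas you try to propagate $0\le p^\varepsilon\le m$, $K^\varepsilon\ge 0$, and $W^\varepsilon\le 0$ simultaneously by a continuity-in-time bootstrap.

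That bootstrap as written has a genuine gap: the implication chain (iii)$\Rightarrow$(ii)$\Rightarrow$(iii) is circular, and the continuity argument does not close it because at the critical time the inequalities become non-strict. Concretely, suppose $t^*$ is the first time $\max_x W^\varepsilon(\cdot,t)$ reaches $0$. At the maximizer you have $W^\varepsilon=0$, so $(W^\varepsilon)^2=0$ and $W^\varepsilon/x=0$; the PDE gives only $2K^\varepsilon/x^3\le -W^\varepsilon_t+\varepsilon W^\varepsilon_{xx}\le 0$, hence $K^\varepsilon=0$ there---no contradiction, and no reason for $W^\varepsilon$ not to push past $0$ afterwards. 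Your ``strict contradiction'' only fires when the maximum of $W^\varepsilon$ is already \emph{strictly} positive, but by then you can no longer invoke (ii), because $K^\varepsilon\ge 0$ was itself deduced from $W^\varepsilon\le 0$. What actually rescues the argument is a \emph{local} version of the $K$--$W$ link at the max point $(x_0,T)$: since $W^\varepsilon(y,T)\le W^\varepsilon(x_0,T)$ for all $y\le x_0$,
\[
K^\varepsilon(x_0,T)=-\int_0^{x_0} y\,W^\varepsilon(y,T)\,dy\;\ge\; -\frac{x_0^2}{2}\,W^\varepsilon(x_0,T),
\]
so that $\frac{W^\varepsilon}{x_0}+\frac{2K^\varepsilon}{x_0^3}\ge 0$ at the max, and the PDE then forces $(W^\varepsilon)^2\le 0$, i.e.\ $\max W^\varepsilon\le 0$ directly (no contradiction needed). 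This is exactly what the paper's Taylor-remainder identity $0=F^\varepsilon(0,t)=F^\varepsilon-xF^\varepsilon_x+\tfrac{x^2}{2}\partial_x^2F^\varepsilon(\theta x,t)$ encodes: it rewrites $\frac{W}{x}+\frac{2K}{x^3}$ as $\frac{W(x,t)-W(\theta x,t)}{x}$, which is manifestly sign-definite at a maximum, so you never need to know $K\ge 0$ globally. Two smaller issues: (a) the separate parabolic equation you derive for $K^\varepsilon$ with the $2\varepsilon W^\varepsilon$ coupling is unnecessary---the algebraic relation $K_x=-xW$ alone gives (ii) from (iii)---and it weakens the argument since at a would-be negative minimum of $K^\varepsilon$ the right-hand side $2\varepsilon W^\varepsilon\le 0$ does not produce a contradiction; (b) your claim that $p^\varepsilon(0,t)=m$ is ``selected by continuity'' from the boundary equation is not quite right, since at $x=0$ the regularized equation gives $\tfrac12(p^\varepsilon(0,t)-m)(p^\varepsilon(0,t)-m+1)=\varepsilon W^\varepsilon(0,t)\le 0$, which constrains $p^\varepsilon(0,t)\in[m-1,m]$ but does not pin it to $m$; in any case the paper's proof of $0\le p^\varepsilon\le m$ uses the Taylor trick on the $p$-equation and does not need the exact boundary value.
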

 
 The concavity of $F$ in the above lemma is rather standard as the Hamiltonian is convex (in fact quadratic) in $p$.
 Of course, we need to be careful with the singularity of $H$ in $x$ at $x=0$, but otherwise, the arguments in the proof of Lemma \ref{lem:Fconcave} are quite classical.
 Next, we show that in a smaller range of $m$ ($0<m<\frac{1}{2}$), $F\in C^{1,1}((0,\infty)^2) \cap C^1([0,\infty)\times(0,\infty))$ under assumptions \eqref{A:firstDerivativeBound}--\eqref{A:secondDerivativeBound2}.
 It is worth noting that we do not need to put any assumption on third or higher derivatives of $F_0$.
 \begin{theorem}\label{thm:regularity-m-less-than-quarter}
   Assume {\rm \eqref{A:firstDerivativeBound}--\eqref{A:secondDerivativeBound2}}, and $0<m<\frac{1}{2}$.
  Assume further that $F_0$ is bounded, and $ 0 \leq F_0(x) \leq mx$.
  Then the sublinear solution $F$ to the equation~\eqref{e:main} is in $C^{1,1}((0,\infty)^2) \cap C^1([0,\infty)\times(0,\infty))$.
  Moreover, $F$ satisfies that, for $(x,t) \in (0,\infty)^2$,
  \[
  0 \leq \partial_x F(x,t) \leq m \quad \text{ and } \quad -1 \leq x \partial_x^2 F(x,t) \leq 0.
  \]
\end{theorem}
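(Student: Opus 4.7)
The plan is to upgrade the concavity from Lemma~\ref{lem:Fconcave} by two additional one-sided estimates obtained via a Bernstein/maximum-principle procedure: the pointwise bounds $0 \le \partial_x F \le m$ on the first derivative, and a new weighted semiconvexity estimate $x\partial_x^2 F \ge -1$ on the second derivative. Once the two-sided Hessian bound $-1 \le x\partial_x^2 F \le 0$ is in hand, $\partial_x^2 F$ is locally bounded on $(0,\infty)^2$, and the equation \eqref{e:main} transfers this into local boundedness of $\partial_t F$ and its first-order derivatives, yielding $C^{1,1}((0,\infty)^2)$. All computations below are formal; they will be justified on smooth solutions of a regularized problem (for instance vanishing viscosity combined with mollification of $F_0$, posed on strips $[\epsilon, 1/\epsilon]$ with compatible boundary data) and then passed to the limit using the comparison principle underlying Theorem~\ref{thm:wellposedness}.

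For the first-derivative bounds, differentiating \eqref{e:main} in $x$ and writing $u := \partial_x F$ gives
\begin{equation*}
  \partial_t u + \left(u - m - \tfrac{1}{2}\right)\partial_x u + \frac{u}{x} - \frac{F}{x^2} = 0.
\end{equation*}
At a would-be boundary value $u = 0$, the source reduces to $F/x^2 \ge 0$ and pushes $u$ upward; at $u = m$, using $F \le mx$, it becomes $(F/x - m)/x \le 0$ and pushes $u$ downward. Combined with \eqref{A:firstDerivativeBound} at $t = 0$, this gives invariance of $[0,m]$, hence $0 \le \partial_x F \le m$ for all $t \ge 0$.

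The heart of the argument is the lower bound $x \partial_x^2 F \ge -1$. A further differentiation shows that $w := \partial_x^2 F$ satisfies
\begin{equation*}
  \partial_t w + \left(u - m - \tfrac{1}{2}\right)\partial_x w + w^2 + \frac{w}{x} = \frac{2}{x^2}\left(u - \frac{F}{x}\right),
\end{equation*}
and therefore $v := xw$ satisfies
\begin{equation*}
  \partial_t v + \left(u - m - \tfrac{1}{2}\right)\partial_x v + \frac{v}{x}\left(v + m + \tfrac{3}{2} - u\right) = \frac{2}{x}\left(u - \frac{F}{x}\right).
\end{equation*}
Evaluating at $v = -1$ and using $0 \le u \le m$ together with $F/x \le m$, the equation along characteristics of the transport field $u - m - \tfrac{1}{2}$ reads
\begin{equation*}
  \partial_t v + \left(u - m - \tfrac{1}{2}\right)\partial_x v \;=\; \frac{m + \tfrac{1}{2} + u - 2F/x}{x} \;\ge\; \frac{\tfrac{1}{2} - m}{x} \;>\; 0,
\end{equation*}
and it is precisely here that the strict hypothesis $m < \tfrac{1}{2}$ is used. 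Consequently $v$ cannot cross the level $-1$ downward, so $\{v \ge -1\}$ is forward invariant. Since \eqref{A:secondDerivativeBound2} provides $v(x,0) = xF_0''(x) \ge -m/e > -1$, a standard maximum principle for the equation for $v$, applied on the regularized problem, propagates $x \partial_x^2 F \ge -1$ on $(0,\infty)^2$; together with $\partial_x^2 F \le 0$ from Lemma~\ref{lem:Fconcave}, this completes the Hessian estimate.

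Finally, to pass from these a priori bounds to the announced regularity: local boundedness of $\partial_x^2 F$ on $(0,\infty)^2$, together with the equation and the bounds on $u$, upgrade $F$ to $C^{1,1}((0,\infty)^2)$; continuity of $\partial_x F$ up to $x = 0$ follows from the monotonicity of $x \mapsto \partial_x F(x,t)$ (by concavity) and the equation, since passing $x \to 0^+$ in \eqref{e:main} with $F(0,t) = 0$ and $F/x \to \partial_x F(0^+, t)$ forces $(\partial_x F(0^+,t) - m)(\partial_x F(0^+,t) - m + 1) = 0$, and the constraint $0 \le \partial_x F \le m < 1$ selects $\partial_x F(0^+, t) = m$, consistent with \eqref{A:firstDerivativeBound}. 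The main obstacle in this program is to make the Bernstein computations for $u$ and $v$ rigorous in the presence of the singularity at $x = 0$ and the unboundedness of the spatial domain; this requires an approximation scheme that preserves the one-sided estimates with constants uniform in the regularization parameter, followed by a stability argument based on the comparison principle from Theorem~\ref{thm:wellposedness}.
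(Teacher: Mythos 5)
Your formal derivations match the heart of the paper's proof: differentiating to get transport equations for $u=\partial_x F$ and $w=\partial_x^2 F$, changing variables to $v=xw$, noting that the quadratic zeroth-order term in the $v$-equation together with the constraints $0\le u\le m$, $F/x\le m$ makes $-1$ a strict barrier precisely when $m<\tfrac12$ (your computation $\partial_t v \ge (\tfrac12-m)/x>0$ along the level $v=-1$ is a tidy repackaging of the paper's quadratic inequality $\alpha^2+A\alpha+B\ge 0$ and the gap in its solution set), and finally reading off $\partial_x F(0^+,t)=m$ from the equation at $x=0^+$.

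The genuine gap is in the approximation scheme, and you have identified but not filled it. Regularizing on strips $[\epsilon,1/\epsilon]$ with ``compatible boundary data'' is the wrong move: the entire difficulty lives at $x=0$, where the barrier argument needs two delicate facts that a cut domain would destroy. First, you need $v^\epsilon=x\partial_x^2 F^\epsilon$ to be continuous up to $x=0$ with $v^\epsilon(0,t)=0$, so that the infimum $\alpha(t)=\inf v^\epsilon$ is not achieved at the boundary and is continuous in $t$; without continuity of $\alpha$, ``$\{v\ge -1\}$ is forward invariant'' does not follow from a pointwise computation at the level $v=-1$, since $\alpha$ could jump across it. Second, you need $\partial_x F^\epsilon(0,t)=m$ uniformly in $\epsilon$, and a plain viscous regularization $\epsilon\,\partial_{xx}$ destroys this: solving the equation at $x\to 0^+$ with $a\equiv 1$ gives $\tfrac12(\partial_x F^\epsilon(0,t)-m)(\partial_x F^\epsilon(0,t)-m+1)=\epsilon\,\partial_x^2 F^\epsilon(0,t)$, which does not force $\partial_x F^\epsilon(0,t)=m$. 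The paper's solution is to regularize on the whole half-line by the \emph{degenerate} operator $\epsilon\,a(x)\partial_{xx}$ with $a$ concave, bounded, and equal to $x$ near the origin (see \eqref{e:aDefinition}). The degeneracy $a(x)=x$ kills the right-hand side at $x=0$ so that $\partial_x F_2^\epsilon(0,t)=m$ can be extracted (Lemma~\ref{lem:boundaryEst2}), and the boundary Schauder theory for such degenerate parabolic operators (Daskalopoulos--Hamilton, Koch, Feehan--Pop) delivers exactly the $\mathcal{C}_s^{\beta}$-continuity of $v^\epsilon$ up to $x=0$ that makes $\alpha(t)$ continuous. The concavity of $a$ is also used in the maximum-principle step to control the extra $a''$ terms with a favorable sign. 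You should replace your strip regularization by this degenerate-viscosity setup; with it, your barrier computation becomes the content of the paper's Lemma~\ref{l:interiorLowerBoundApprox}, and the rest of your outline (Arzel\`a--Ascoli for the double difference quotient, the equation transferring $x$-regularity to $t$-regularity, and the $x=0^+$ limit forcing $\partial_x F(0^+,t)=m$ among the roots $\{m,m-1\}$) is the same as the paper's.
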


To the best of our knowledge, the regularity result in Theorem \ref{thm:regularity-m-less-than-quarter} is new in the literature.
The proofs of Lemma \ref{lem:Fconcave} and Theorem \ref{thm:regularity-m-less-than-quarter} are given in Subsection \ref{S:m between 0 and 1}.
Next is our existence result for C-F when $0<m<\frac{1}{2}$.

\begin{theorem}\label{thm: existence of solution to C-F}
Assume that $F_0$ is the Bernstein transform of $c_0=c(\cdot,0)$, where $c_0$ has $m_1(0)=m \in (0,\frac{1}{2})$ and also bounded zeroth and second moments, that is,
\[
m_0(0)=\int_0^\infty  c(s,0)\,ds \leq C \quad\text{and} \quad m_2(0)=\int_0^\infty s^2 c(s,0)\,ds \leq C\,.
\]
Then \eqref{eq:CF} has a mass-conserving weak solution in the measure sense.
\end{theorem}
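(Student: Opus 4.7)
The plan is to build the measure-valued solution by inverting the Bernstein transform of the viscosity solution $F$ to \eqref{e:main}, using the regularity result of Theorem~\ref{thm:regularity-m-less-than-quarter} as the backbone and an approximation argument to bridge between viscosity solutions and positive measures. Since $c_0$ has finite first and second moments, the computations done in the paper right after \eqref{A:secondDerivativeBound2} show that the Bernstein transform $F_0$ satisfies \eqref{A:firstDerivativeBound}--\eqref{A:secondDerivativeBound2}, and obviously $F_0$ is bounded and $0 \leq F_0(x) \leq mx$. Thus Theorem~\ref{thm:regularity-m-less-than-quarter} supplies a unique sublinear $F \in C^{1,1}((0,\infty)^2) \cap C^1([0,\infty)\times(0,\infty))$ with $0 \leq \partial_x F \leq m$ and $-1 \leq x\partial_x^2 F \leq 0$.

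The first main step is to regularize $c_0$ by a sequence $c_0^n$ of smooth, compactly supported nonnegative densities converging weakly to $c_0$ with matching moments $m_i^n(0) \to m_i(0)$ for $i=0,1,2$, and to replace the multiplicative kernel by a truncated version $a_n(s,\hat s) = s\hat s\,\mathbf{1}_{\{s+\hat s \leq n\}}$. For such bounded kernels, classical theory (e.g.\ Melzak, Ball--Carr, Norris, or the monograph of Banasiak--Lamb--Lauren\c{c}ot) provides a global mass-conserving smooth solution $c^n(s,t)$; its Bernstein transform $F^n(x,t)$ is analytic in $x$, is a Bernstein function (so $\partial_x F^n(\cdot,t)$ is completely monotone), and satisfies a truncated Hamilton-Jacobi equation that converges formally to \eqref{e:main}. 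The inherited bounds $0 \leq \partial_x F^n \leq m_1^n(0)$ and the second-moment control transfer via the verification preceding Theorem~\ref{thm:regularity-m-less-than-quarter} to uniform estimates on $F^n$ and $\partial_x^2 F^n$; stability of viscosity solutions then identifies the locally uniform limit of $F^n$ with the unique sublinear viscosity solution $F$ of \eqref{e:main} given by Theorem~\ref{thm:wellposedness}. By Helly's selection theorem (applied on the Bernstein side via pointwise convergence of $F^n(\cdot,t)$ and monotonicity of $\partial_x F^n$), each $c^n_t(ds) = c^n(s,t)\,ds$ has a weakly convergent subsequence $c^n_t \rightharpoonup c_t$ in the vague topology on $(0,\infty)$, with $F(x,t) = \int_0^\infty (1-e^{-sx})\,c_t(ds)$.

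The second main step is to verify that $c_t(ds)$ is a weak solution in the measure sense and is mass-conserving. For the weak formulation, one passes to the limit in the identity satisfied by each $c^n_t$ using test functions $\phi \in BC([0,\infty)) \cap \mathrm{Lip}([0,\infty))$ with $\phi(0)=0$: the fragmentation bilinear term is harmless since $\phi$ is bounded; for the coagulation quadratic term, uniform bounds on the second moment $m_2^n(t)$ (which propagate in time under the truncated dynamics with rate controlled by $m_1^n$ and the Lipschitz norm of $\phi$) yield tightness of $s\,c^n_t(ds) \otimes \hat s\,c^n_t(d\hat s)$, so vague convergence suffices to pass to the limit. Mass conservation follows from $\partial_x F(0,t) = m$ for all $t\ge 0$: indeed, by Theorem~\ref{thm:regularity-m-less-than-quarter} one has $\partial_x F(\cdot,t) \in C([0,\infty))$, and evaluating the H-J equation as $x \to 0^+$ forces $\partial_x F(0,t) \equiv m$, which via monotone convergence identifies $m_1(t) = \lim_{x\to 0^+}\partial_x F(x,t) = m$. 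The main obstacle is the passage to the limit in the coagulation term, where potential concentration of mass at infinity must be ruled out; the uniform second-moment bound supplied by assumption \eqref{A:secondDerivativeBound2} combined with $m<\tfrac12$ (which keeps us strictly subcritical and prevents gelation in the approximation) is precisely what prevents this escape of mass and makes the argument close.
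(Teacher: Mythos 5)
Your proposal takes a genuinely different route from the paper, and unfortunately it has a structural gap at its core. The paper stays entirely on the Hamilton-Jacobi side: it upgrades the $C^{1,1}$ regularity of Theorem~\ref{thm:regularity-m-less-than-quarter} to full smoothness ($F\in C^\infty((0,\infty)^2)\cap C^1([0,\infty)^2)$, Proposition~\ref{prop: F smooth}, via characteristics) and then proves the complete Bernstein sign conditions $(-1)^{n+1}\partial_x^n F\ge 0$ for all $n$ (Proposition~\ref{prop:F changing signs derivatives}, via induction, the maximum principle, and localization along characteristics). The Bernstein representation theorem (Theorem~\ref{t:bernsteinRep}) then produces the measure $c_t(ds)$ directly. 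No kernel truncation and no approximating sequence of weak solutions to the C-F are ever invoked.

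Your approach instead regularizes $c_0$ and truncates the kernel to $a_n(s,\hat s)=s\hat s\,\mathbf{1}_{\{s+\hat s\le n\}}$, applies classical bounded-kernel existence theory, and tries to pass to the limit on the Bernstein side. The key step you assert --- that the Bernstein transforms $F^n$ of the truncated solutions ``satisfy a truncated Hamilton-Jacobi equation that converges formally to \eqref{e:main}'' --- is false. The whole mechanism by which the Bernstein transform closes the coagulation term hinges on the kernel factoring as a product $s\hat s$: using $\phi_x(s)=1-e^{-sx}$, the nonlocal quadratic integral becomes
\[
-\frac{1}{2}\Bigl(\int_0^\infty (1-e^{-sx})\,s\,c(s,t)\,ds\Bigr)^2 = -\frac{1}{2}\bigl(m_1(t)-\partial_x F(x,t)\bigr)^2 \,.
\]
The truncation $\mathbf{1}_{\{s+\hat s\le n\}}$ destroys the product structure, so the coagulation contribution to $\partial_t F^n$ no longer factors and does \emph{not} express itself as a closed-form differential operator in $F^n$. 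Consequently the stability-of-viscosity-solutions step has nothing to apply to: there is no sequence of Hamilton-Jacobi equations converging to \eqref{e:main} that the $F^n$ satisfy, and the identification of the limit with the unique sublinear viscosity solution of Theorem~\ref{thm:wellposedness} is unsupported. Even with a multiplicative cutoff such as $\min(s,n)\min(\hat s,n)$ (which does factor), the resulting square is of $\int(1-e^{-sx})\min(s,n)\,c\,ds$, which is not $m_1-\partial_x F$ or any finite-order differential expression in $F^n$, so the problem persists.

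A second, independent difficulty: you need uniform-in-$n$ and uniform-in-time second-moment bounds for the truncated solutions to pass to the limit in the coagulation term and to get tightness. Proving such uniform moment propagation in the range $m<\tfrac12$ is essentially the hard analytic content of the density-level existence theory and is not a consequence of assumption \eqref{A:secondDerivativeBound2} on the initial data alone; indeed the related work of Lauren\c{c}ot cited in the paper only reaches $m<\tfrac{1}{4\log 2}$ by exactly this kind of moment-bound argument. Your plan implicitly assumes the conclusion of that line of work while aiming to reprove it for a larger range. The paper's detour through smoothness of $F$ and the Bernstein sign conditions is precisely what circumvents both obstructions. One local point you get right: $\partial_x F(0,t)\equiv m$ does encode mass conservation, and that is the mechanism the paper relies on as well (via Lemma~\ref{lem:boundaryEst2} and Proposition~\ref{prop: F smooth}); but this is the conclusion, not a substitute for the intermediate Bernstein-function structure.
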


Of course, this mass-conserving weak solution in the measure sense is unique thanks to Corollary \ref{cor:m leq 1}.
The range we get here for $0<m_1(0)<\frac{1}{2}$ is an improvement to the previous range of $0<m_1(0) < \frac{1}{4 \log 2}$ obtained in \cite{LAURENCOT2019-2}.
The proof of Theorem \ref{thm: existence of solution to C-F} is given in Subsection \ref{subsec: F Bernstein}.
Basically, under the assumptions of Theorem \ref{thm: existence of solution to C-F}, we first need to show that $F \in C^\infty((0,\infty)^2)$ in Proposition \ref{prop: F smooth}.
Then, we deduce that $(-1)^{n+1} \partial^n_x F \geq 0$ for all $n\in \N$ in Proposition \ref{prop:F changing signs derivatives}.
These highly nontrivial regularity results of $F$, together with a characterization of Bernstein functions (see Appendix~\ref{A:bernstein}), allow us to obtain Theorem \ref{thm: existence of solution to C-F}.
\medskip

We then obtain the following large time behavior result for $F$ in case $0<m<1$.
Here, we do not need  assumption \eqref{A:secondDerivativeBound2}.

\begin{theorem}\label{t:largeTimeLessThan1}
 Assume {\rm \eqref{A:firstDerivativeBound}--\eqref{A:secondDerivativeBound1}}.
  Let $0<m<1$, $F_0$ be sublinear, and $ 0 \leq F_0(x) \leq mx$.
  Let $F$ be the Lipschitz, sublinear solution to equation~\eqref{e:main}. 
  Then
  \begin{equation}
    \lim_{t\to \infty} F(x,t) = mx
  \end{equation}
  locally uniformly on $[0,\infty)$.
\end{theorem}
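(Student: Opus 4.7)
The plan is to apply the half-relaxed limit method at $t = \infty$. Define
\[
\bar F(x) = \limsup_{y \to x,\ t \to \infty} F(y,t), \qquad \underline F(x) = \liminf_{y \to x,\ t \to \infty} F(y,t).
\]
Since $0 \le F \le mx$ and $F(0,t) = 0$, both functions take values in $[0, mx]$ and vanish at $x = 0$. Each $F(\cdot, t)$ is concave on $[0, \infty)$ by Lemma~\ref{lem:Fconcave}, and the elementary trick of writing any perturbation $y = \lambda y_1 + (1 - \lambda) y_2$ with $y_i$ near $x_i$ for a common translate transfers concavity to both $\bar F$ and $\underline F$. Standard stability theory for viscosity solutions then implies that $\bar F$ is a subsolution and $\underline F$ a supersolution of the stationary equation
\[
\frac{1}{2}(F' - m)(F' - m - 1) + \frac{F}{x} - m = 0 \quad \text{on } (0, \infty).
\]
My target is to show $\underline F(x) = mx$ on $[0, \infty)$; once achieved, the sandwich $\underline F \le \bar F \le mx$ forces $\bar F = \underline F = mx$, and locally uniform convergence of $F(\cdot, t)$ to $mx$ follows from the classical Dini-type argument applied to a continuous limit.

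To prove $\underline F = mx$, write $h(x) := mx - \underline F(x) \in [0, mx]$. Concavity of $\underline F$ with $\underline F(0) = 0$ makes $h$ convex with $h(0) = 0$, so $g(x) := h(x)/x$ is nondecreasing on $(0, \infty)$ with values in $[0, m]$. Concavity of $\underline F$ also gives differentiability almost everywhere on $(0, \infty)$, and at each such point the supersolution inequality reduces to the pointwise estimate
\[
\frac{h'(x)\bigl(h'(x) + 1\bigr)}{2} \ge \frac{h(x)}{x}.
\]
Writing $h' = g + x g'$, this rearranges algebraically to $x g'(x) \ge \phi\bigl(g(x)\bigr)$ a.e., where $\phi(s) := \frac{\sqrt{1 + 8 s} - 1}{2} - s$ satisfies $\phi(0) = \phi(1) = 0$ and $\phi > 0$ on $(0, 1)$.

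Suppose, for contradiction, that $g(x_0) > 0$ for some $x_0 > 0$. Monotonicity and the bound $g \le m < 1$ confine $g$ to the compact subinterval $[g(x_0), m] \subset (0, 1)$ on $[x_0, \infty)$; on this subinterval $\phi$ is bounded below by some $\varepsilon > 0$, so $g'(x) \ge \varepsilon / x$ a.e.\ on $[x_0, \infty)$. Integrating yields $g(x) \ge g(x_0) + \varepsilon \log(x / x_0) \to \infty$, contradicting $g \le m$. Therefore $g \equiv 0$ and $\underline F = mx$, which finishes the argument via the outline above.

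The main subtlety will be the passage of concavity through the half-relaxed limits and the justification that $\bar F, \underline F$ are genuine viscosity sub/supersolutions of the stationary problem despite the singular term $F/x$; both are handled by working on the open half-line $(0, \infty)$, where the Hamiltonian is regular in $F$ and $F'$, and the boundary value $F(0, t) = 0$ passes to $\bar F(0) = \underline F(0) = 0$ automatically. A pleasant consequence of concavity is that the supersolution property collapses to the pointwise a.e.\ inequality used above, sparing us any test-function gymnastics and reducing the long-time analysis to the one-line ODE contradiction.
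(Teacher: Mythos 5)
Your argument is correct but takes a genuinely different (and arguably cleaner) route once you reach the relaxed lower limit $\underline F$ as a concave stationary supersolution. The paper's proof builds the explicit subsolution barrier $\min\{\tfrac14 m(1-m)x,\ \tfrac14 m(1-m)t\}$ (Lemma~\ref{l:lowerBoundLessThan1}) to secure $\liminf_{x\to\infty} \underline F(x)/x > 0$, and then invokes a characterization of concave supersolutions (Lemma~\ref{l:characterStationaryLessThan1}) that reads off the asymptotic slope $\alpha = m$ by sending $x\to\infty$ in the equation. You avoid both: the change of unknown $g = (mx - \underline F)/x$ is automatically nondecreasing with values in $[0,m]\subset[0,1)$ by convexity of $mx - \underline F$ and $\underline F(0)=0$, and the a.e.\ supersolution inequality yields the scalar differential inequality $xg' \ge \phi(g)$ with $\phi(s) = \tfrac12\bigl(\sqrt{1+8s}-1\bigr) - s > 0$ on $(0,1)$; if $g$ were ever positive it would be trapped in a compact subinterval of $(0,1)$ where $\phi$ is bounded below, forcing $g$ to grow like $\log x$ and contradicting $g\le m$. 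This rigidity argument makes the barrier lemma unnecessary, and in fact shows that the hypothesis $\liminf_{x\to\infty}\bar F/x > 0$ in Lemma~\ref{l:characterStationaryLessThan1} can be dropped: taking $\alpha=0$ in that proof already gives $\tfrac12 m(m-1)\ge 0$, impossible for $0<m<1$. Two caveats worth flagging in a final write-up: (i) the a.e.\ pointwise supersolution inequality for the concave $\underline F$ requires touching from below, which is justified only at second-order (Alexandrov) differentiability points — that this is a.e.\ is exactly what you need, but it deserves a sentence; (ii) both your proof and the paper's appeal, without elaboration, to the standard-but-nontrivial fact that the lower relaxed limit in $t$ of a solution of $\partial_t F + H = 0$ is a supersolution of the stationary problem.
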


Heuristically, Theorem~\ref{t:largeTimeLessThan1} implies that as $t\to \infty$, all the solutions (mass-conserving or not) will turn to dusts (particles of size zero) if their initial total mass is less than $1$. To see this, we note that, if $F_\infty(x)=\lim_{t\to \infty} F(x,t)$ is a Bernstein transform, then for some measure $\mu_\infty$,
\begin{equation*}
    F_\infty (x) = \int_0^\infty ( 1 - e^{-sx} )\, \mu_\infty(ds)  = mx \,.
\end{equation*}
Differentiating in $x$, it is necessary that
\begin{equation*}
    \int_0^\infty s e^{-sx}  \, \mu_\infty(ds) = m\,,
\end{equation*}
which implies $s\mu_\infty(ds) = m \delta_0 (ds)$.

\medskip

 To avoid any confusion, we conclude the introduction by emphasizing the following points.
  \begin{itemize}
      \item  While the viscosity solution to the Hamilton-Jacobi equation \eqref{e:main} itself does not correspond to any extension of weak solutions to the C-F, if the viscosity solution $F$ is smooth (i.e., a smooth classical solution) and $(-1)^{n+1} \partial^n_x F \geq 0$ in $(0,\infty)^2$ for all $n\in \N$, it would correspond to a mass-conserving weak solution in the measure sense to the C-F.
     Therefore, regularity of the viscosity solution will imply whether one could have a mass-conserving weak solution in the measure sense to the C-F or not.
     This is, obviously, an extremely hard and central issue in the theory of viscosity solutions.
     
    \item  Here, we achieve uniqueness of mass-conserving weak solutions to the C-F for $0<m_1(0) \leq 1$. 
  We show  existence of such mass-conserving weak solutions for $0<m_1(0)<\frac{1}{2}$, and of course, the range $\frac{1}{2} \leq m_1(0) \leq 1$ is still open.
 
    \item To obtain a classical mass-conserving solution for equation~\eqref{eq:CF} in case $0<m_1(0)<\frac{1}{2}$, one needs to show that the mass-conserving weak solution in the measure sense actually admits a density, which requires more properties from the corresponding Bernstein function. 
    This has been done by Degond, Liu and Pego~\cite{DegondLiuEA17} in a different setting, but remains a hard problem here and will be addressed in future works.
  \end{itemize}

\section{Wellposedness of \eqref{e:main} in case $m \in (0, 1]$} \label{S:Wellposedness}
We first prove the existence and uniqueness of viscosity solutions to  \eqref{e:main}.
In this section, we always assume  that conditions of Theorem \ref{thm:wellposedness} are in force.

\subsection{Existence of viscosity solutions to  \eqref{e:main}}
We search for sublinear solutions to \eqref{e:main} which satisfy \eqref{ine:sublinear}, that is,
\[
0 \leq F(x,t) \leq mx \quad \text{ for all } (x,t) \in [0,\infty)^2\,.
\]
Since \eqref{e:main} is singular at $x=0$, we cut off its singularity  by introducing a sequence of function $\set{\phi_n}$ where
\begin{equation*}
\phi_n(x) = \max\left\{ \frac{1}{n},x \right\} \quad \text{ for all } x \in [0,\infty)\,.
\end{equation*}

By the classical theory of viscosity solutions, we have that for each $n\in \N$, the equation
\begin{equation} \label{e:cutoff}
  \begin{cases}
    \partial_t F + \frac{1}{2} (\partial_x F - m) ( \partial_x F - m - 1) + \frac{F}{\phi_n(x)}  -m = 0 \quad &\text{ in } (0,\infty)^2\,,\\
    F(x,0) = F_0(x) \quad &\text{ on } [0,\infty)\,,\\
    F(0,t) = 0 \quad &\text{ on } [0,\infty)\,,
  \end{cases}
\end{equation}
has a unique sublinear viscosity solution $F^n$.  
In fact, the sublinearity of $F^n$ can be  seen through the fact that
\[
F_0(x) - Ct \leq F^n(x,t) \leq F_0(x) + Ct \quad \text{ for all } (x,t) \in [0,\infty)^2\,,
\]
as $F_0(x) - Ct, F_0(x) + Ct$ are a subsolution and a supersolution to \eqref{e:cutoff}, respectively, for some $C>0$ sufficiently large.
To see this, we have
  \begin{equation*}
    C + \frac{1}{2} ( \partial_x F_0(x) - m)(\partial_x F_0(x) -m -1) + \frac{ F_0(x) + Ct}{\phi_n(x)} - m \geq 0  
  \end{equation*}
  and
  \begin{equation*}
    - C + \frac{1}{2} ( \partial_x F_0(x) - m)(\partial_x F_0(x) -m -1) + \frac{ F_0(x) - Ct}{\phi_n} - m \leq 0 
  \end{equation*}
 provided that 
 \begin{equation*}
   C \geq 2m+ \sup_{x \in (0,\infty)}  \left|( \partial_x F_0(x) - m)(\partial_x F_0(x) -m -1)\right|\,.
 \end{equation*}

\begin{lemma} \label{lem:monotone}
  For each $n \in \N$, let $F^n$ be the viscosity solution to equation~\eqref{e:cutoff}. Then, we have that
  \begin{equation} \label{ine:Fn}
    F^{n+1} \leq F^n 
  \end{equation}
  for all $n \in \N$.
\end{lemma}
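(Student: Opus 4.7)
The plan is to exhibit $F^n$ as a viscosity supersolution of the equation satisfied by $F^{n+1}$, and then invoke comparison. The whole argument rests on the elementary observation that the cut-off $\phi_n(x)=\max\{1/n,x\}$ is non-increasing in $n$: since $1/(n+1) < 1/n$, we have $\phi_{n+1}(x) \leq \phi_n(x)$ for all $x\geq 0$, and therefore $1/\phi_n(x) \leq 1/\phi_{n+1}(x)$.

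The first step is to verify that $F^n \geq 0$, which is needed so that multiplying $1/\phi_n \leq 1/\phi_{n+1}$ by $F^n$ preserves the inequality. For this one checks that the constant function $0$ is a classical subsolution of \eqref{e:cutoff} in the regime $0<m\leq 1$:
\[
\partial_t(0) + \tfrac{1}{2}(0-m)(0-m-1) + \tfrac{0}{\phi_n(x)} - m = \tfrac{m(m-1)}{2} \leq 0.
\]
Together with $F^n(x,0)=F_0(x)\geq 0$ and the boundary value $F^n(0,t)=0$, the standard comparison principle applied to \eqref{e:cutoff} (which is non-singular, with Hamiltonian Lipschitz and proper in $z$ on bounded $x$-sets) gives $F^n\geq 0$ on $[0,\infty)^2$.

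With non-negativity in hand, the second step is the monotonicity itself. Because $F^n\geq 0$ and $1/\phi_n \leq 1/\phi_{n+1}$ pointwise, one has $F^n/\phi_n(x) \leq F^n/\phi_{n+1}(x)$, so at any point where $F^n$ satisfies the equation for $\phi_n$ in the viscosity sense,
\[
\partial_t F^n + \tfrac{1}{2}(\partial_x F^n-m)(\partial_x F^n-m-1) + \tfrac{F^n}{\phi_{n+1}(x)} - m \;\geq\; 0.
\]
Thus $F^n$ is a viscosity supersolution of \eqref{e:cutoff} with index $n+1$. Since $F^n$ and $F^{n+1}$ agree on the parabolic boundary $\{t=0\}\cup\{x=0\}$, the comparison principle for \eqref{e:cutoff} with index $n+1$ yields $F^{n+1}\leq F^n$.

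The main technical point to watch is the comparison principle in an unbounded $x$-domain. Because both $F^n$ and $F^{n+1}$ are sublinear in $x$ (as already established by the barriers $F_0(x)\pm Ct$), the usual doubling-of-variables argument with a mild penalization such as $\varepsilon x^2$ (or $\varepsilon e^{\alpha t}(1+x)$) forces the supremum of $F^{n+1}-F^n$ to be attained in a compact set, where the Lipschitzness and properness in $z$ of the truncated Hamiltonian make the standard argument go through. Once this is set up, the inequality $F^{n+1}\leq F^n$ follows from the supersolution property derived above.
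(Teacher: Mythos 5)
Your proof takes essentially the same approach as the paper: use $\phi_n \geq \phi_{n+1}$ to conclude that $F^n$ is a viscosity supersolution of the $(n+1)$-th cutoff equation, then invoke comparison. You commendably spell out the nonnegativity $F^n\geq 0$ (needed to pass from $1/\phi_n\leq 1/\phi_{n+1}$ to $F^n/\phi_n\leq F^n/\phi_{n+1}$) and the unbounded-domain comparison issue, both of which the paper leaves implicit in its one-line argument.
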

\begin{proof}
    To see this, we note that $\phi_n \geq \phi_{n+1}$. Therefore
    \begin{equation*}
        \frac{F^n}{\phi_n} \leq \frac{F^n}{\phi_{n+1}} \,,
    \end{equation*}
    which implies that $F^n$ is a supersolution to equation~\eqref{e:cutoff} with $\phi_{n+1}$. Thus, \eqref{ine:Fn} follows.
\end{proof}

\begin{lemma} \label{lem:approximateLipschitz}
  For each $n \in \N$, let $F^n$ be the viscosity solution to equation~\eqref{e:cutoff}. 
  Then, $\{F^n\}$ is equi-Lipschitz, that is, there exists a constant $C>0$ so that for every $n\in \N$,
  \begin{equation} \label{ine:approximateLipschitz}
    \abs{ F^n(x_1,t_1) - F^n(x_0,t_0)} \leq C( \abs{ t_1 - t_0} + \abs{x_1 - x_0}) \,,
  \end{equation}
  for every $t_0, t_1, x_0, x_1 \in [0,\infty)$.
\end{lemma}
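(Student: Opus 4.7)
The plan is to obtain the spatial Lipschitz estimate and the temporal Lipschitz estimate separately, each uniform in $n$, via comparisons with shifted copies of $F^n$. For the spatial bound I will fix $h > 0$, view $G(x,t) := F^n(x+h,t)$ as a solution of \eqref{e:cutoff} with the coefficient $1/\phi_n(x)$ replaced by $1/\phi_n(x+h)$, and then show that $H := F^n + Lh$ is a supersolution and $\bar H := F^n - Lh$ is a subsolution of $G$'s PDE, for $L := \max(m, \|F_0'\|_\infty)$. The comparison principle for the cut-off equation (whose Hamiltonian is, for each fixed $n$, Lipschitz in $p$ and $z$ and continuous in $x$) then delivers $|F^n(x+h,t) - F^n(x,t)| \leq L h$.

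The key algebraic step is that, after subtracting the PDE satisfied by $F^n$ from the $G$-operator applied to $H$, the residual collapses to
\[
\frac{L h\,\phi_n(x) - F^n(x,t)\bigl(\phi_n(x+h) - \phi_n(x)\bigr)}{\phi_n(x)\,\phi_n(x+h)}.
\]
This is nonnegative because $\phi_n$ is $1$-Lipschitz and nondecreasing (so $0 \leq \phi_n(x+h) - \phi_n(x) \leq h$), because $x \leq \phi_n(x)$, and because $F^n \leq m x$ with $L \geq m$. An analogous (and slightly easier) computation produces a non-positive residual for $\bar H$, since both resulting terms carry a minus sign thanks to $F^n \geq 0$ and the monotonicity of $\phi_n$. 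The boundary data match because $H(0,t) = L h \geq m h \geq F^n(h,t)$ and $\bar H(0,t) \leq 0 \leq F^n(h,t)$, and the initial data match via the Lipschitz bound on $F_0$ applied to $F_0(x+h) - F_0(x)$. For the time bound, I will reuse the sub- and supersolutions $F_0(x) \pm C t$ already exhibited above the lemma statement, applied now to the time-translated solution $(x,s) \mapsto F^n(x, t_0 + s)$, whose initial datum $F^n(\cdot, t_0)$ is $L$-Lipschitz by the spatial estimate just proved; the corresponding constant then depends only on $m$ and $L$, hence is independent of $n$.

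The main obstacle is that the Hamiltonian of \eqref{e:cutoff} fails to be uniformly (in $n$) Lipschitz in $z$ or $x$, because $1/\phi_n(x)$ can be as large as $n$. Consequently the routine approaches — Bernstein estimates, or doubling of variables with a Lipschitz Hamiltonian — would yield $n$-dependent constants. What rescues the argument is the structural combination that $\phi_n$ is both monotone and $1$-Lipschitz, together with the a priori bounds $0 \leq F^n \leq m x$. I expect the careful identification of this cancellation in the shift comparison, and verifying that it survives uniformly near the singular boundary $x = 0$ with the same constant $L$, to be the delicate part of the proof.
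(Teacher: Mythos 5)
Your plan takes a genuinely different route from the paper: you establish the spatial Lipschitz bound first, by comparing $F^n$ against its shift $F^n(\cdot+h,\cdot)$, and then deduce the temporal bound; the paper goes in the opposite order, first obtaining $\lvert F^n(\cdot,t)-F_0\rvert\leq Ct$ from linear-in-time barriers and $L^\infty$-contractivity, and then reading the gradient bound directly off the PDE using that $0\leq F^n/\phi_n\leq m+1$. The residual you identify,
\[
\frac{Lh\,\phi_n(x)-F^n(x,t)\bigl(\phi_n(x+h)-\phi_n(x)\bigr)}{\phi_n(x)\,\phi_n(x+h)}\,,
\]
is indeed the right object, and its sign works out with $L\geq m$ once one knows $F^n\leq m\phi_n(x)$, which is true (because $m\phi_n$ is a supersolution of \eqref{e:cutoff}, agrees with $0$ at $x=0$ and dominates $F_0$).

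However, there is a genuine gap in your justification, and it matters precisely where you yourself flagged the delicate point. You invoke $F^n\leq mx$ twice: to sign the residual and to match the lateral boundary data via $H(0,t)=Lh\geq mh\geq F^n(h,t)$. But $F^n\leq mx$ is \emph{false} for the cut-off problem: the function $x\mapsto mx$ is not a supersolution of \eqref{e:cutoff}, since for $0<x<1/n$ one has $\tfrac{mx}{\phi_n(x)}-m=m(nx-1)<0$; the correct comparison gives only $F^n\leq m\phi_n(x)=m\max\{1/n,x\}$, which for $h<1/n$ yields $F^n(h,t)\leq m/n$, a quantity that is \emph{not} $\leq Lh$ for fixed $L$ as $h\to0$. (In fact one expects $\partial_x F^n(0^+,t)>m$ for $t>0$, so the boundary inequality you want cannot hold with $L=m$.) This breaks the parabolic-boundary ordering $H(0,t)\geq G(0,t)$ for small $h$, and since small $h$ is exactly what is needed for a Lipschitz estimate, the comparison does not close as written. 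The residual step survives once you replace $F^n\leq mx$ by the true bound $F^n\leq m\phi_n$, but the boundary step needs a separate input: one can repair it by checking that $\beta(x)=Kx$ with $K=m+\tfrac{1+\sqrt{1+8m}}{2}$ is a supersolution of \eqref{e:cutoff} (the worst constraint is at $x=0$, where it reduces to $\tfrac12(K-m)(K-m-1)\geq m$), whence $F^n\leq Kx$ uniformly in $n$ and the lateral comparison holds with $L\geq K$. With that fix, and with $L$ also at least the Lipschitz constant of $F_0$, your shift argument and the subsequent time estimate go through.
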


\begin{proof}
  We achieve global Lipschitz property in time using the solutions to the approximation problems.
  We note that equation~\eqref{e:cutoff} obeys the classical theory of viscosity solutions so
  the comparison principle holds. 

  For each $n\in \N$, we have that $\phi^- \equiv 0$ is a subsolution and $\phi^+ = mx+ \frac{1}{n}$ is a supersolution to equation~\eqref{e:cutoff}.
  To see the subsolution, we have that
  \begin{equation*}
    \frac{1}{2}m(m+1) - m = \frac{m(m-1)}{2} \leq 0 \,.
  \end{equation*}
 
  To see the supersolution, we have that
  \begin{equation*}
    \frac{ mx + \frac{1}{n}}{\phi_n(x)} - m = 
    \begin{cases}
      \frac{1}{nx} & \text{ if } x \geq \frac{1}{n}\,,  \\
      nmx + 1 - m & \text{ if } x \leq \frac{1}{n}\,,
    \end{cases}
  \end{equation*}
which is always nonnegative.
  On the other hand, as shown just before Lemma \ref{lem:monotone},
   we also have that $F_0(x) - Ct$ and $F_0(x) + Ct$ are a subsolution and a supersolution to \eqref{e:cutoff}, respectively.
  Therefore, $G^-(x,t) \defeq \max\set{ 0, F_0(x) - Ct }$ is also a subsolution, and $G^+(x,t) \defeq \min\set{ mx+\frac{1}{n}, F_0(x) + Ct}$ is also a supersolution to \eqref{e:cutoff}.
  And so, by the comparison principle,
  \begin{equation}\label{bound-F-n}
    G^-(x,t) \leq F^n(x,t) \leq G^+(x,t) \,.
  \end{equation}
  Thus, for $t >0$,
  \begin{equation*}
    \abs{ F^n(x,t) - F^n(x,0) } \leq Ct \,.
  \end{equation*}
  By the $L^\infty$-contractive property of solutions to Hamilton-Jacobi equations (which follows from the comparison principle itself), for every $t_0,  t_1 \in [0,\infty)$ with $t_1>t_0$,
  \begin{equation} \label{ine:timeLipschitz}
    \sup_x\abs{ F^n(x,t_1) - F^n(x,t_0)} \leq  \sup_x\abs{ F^n(x,t_1-t_0) - F^n(x,0)} \leq C \abs{t_1 - t_0} \,.
  \end{equation}

 This is equivalent to the fact that 
 \begin{equation} \label{ine:tderivativeBound}
   \abs{\partial_t F^n(x,t) } \leq C
 \end{equation}
 in the viscosity sense.
 Therefore, rearranging equation~\eqref{e:cutoff} and using triangle inequality, estimates \eqref{bound-F-n} and~\eqref{ine:tderivativeBound}, we have
 \begin{align*}
 \abs{(\partial_x F^n - m) (\partial_x F^n -m - 1)} = 2 \abs[\Big]{- \partial_t F^n + m - \frac{F^n}{\phi_n(x)}} \leq  C
 \end{align*}
 in the viscosity sense. Therefore, there exists a constant $C>0$ (independent of $n \in \N$) so that
 \begin{equation*}
   \abs{\partial_x F^n} \leq C
 \end{equation*}
 in the viscosity sense, which is equivalent to 
 \begin{equation} \label{ine:spaceLipschitz}
   \abs{ F^n(x_1,t) - F^n(x_0,t) } \leq C\abs{ x_1 - x_0} 
 \end{equation}
 for every $x_1, x_0 \in (0,\infty)$. Combining estimates~\eqref{ine:timeLipschitz} and~\eqref{ine:spaceLipschitz}, we get the desired inequality~\eqref{ine:approximateLipschitz}.
  \end{proof}

  \begin{lemma}\label{lem:existence}
    There exists a function $F$ so that $\set{F^n}$ converges to $F$ locally uniformly on $[0,\infty)^2$, and $F$ is sublinear, uniformly Lipschitz with the same Lipschitz constant as in Lemma~\ref{lem:approximateLipschitz}. 
    Furthermore, $F$ is a viscosity solution to equation~\eqref{e:main}.
  \end{lemma}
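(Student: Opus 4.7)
The plan is to obtain $F$ as the monotone limit of $\{F^n\}$, transfer the desired properties from the approximants, and then verify the viscosity solution property by standard stability away from the singularity at $x=0$.

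First, by Lemma~\ref{lem:monotone} the sequence $\{F^n\}$ is pointwise monotone decreasing in $n$, and by~\eqref{bound-F-n} it is uniformly bounded below by $0$. Hence the pointwise limit $F(x,t) := \lim_{n\to\infty} F^n(x,t)$ exists on $[0,\infty)^2$. The equi-Lipschitz estimate~\eqref{ine:approximateLipschitz} passes to the limit, so $F$ is Lipschitz on $[0,\infty)^2$ with the same constant $C$; combined with the monotonicity and the resulting continuity of $F$, this upgrades pointwise convergence to locally uniform convergence (for instance by Dini's theorem on each compact subset of $[0,\infty)^2$).

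Second, the remaining qualitative properties follow from~\eqref{bound-F-n}. Sending $n\to\infty$ in $F^n(x,t) \leq mx + \tfrac{1}{n}$ gives $0 \leq F(x,t) \leq mx$, while the upper barrier $F^n(x,t) \leq F_0(x) + Ct$ (also from the proof of Lemma~\ref{lem:approximateLipschitz}) yields $F(x,t)/x \leq F_0(x)/x + Ct/x \to 0$ as $x \to \infty$ for each fixed $t$, proving sublinearity since $F_0$ is sublinear by hypothesis. The initial condition $F(x,0) = F_0(x)$ is inherited directly from $F^n(x,0) = F_0(x)$.

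Third, and this is the essential step, I would verify that $F$ is a viscosity solution to~\eqref{e:main} on $(0,\infty)^2$ by localizing away from the singular set $\{x=0\}$. Fix an arbitrary $(x_0, t_0) \in (0,\infty)^2$ and choose a compact neighborhood $K = [a,b] \times [t_1, t_2] \subset (0,\infty)^2$ with $a > 0$ and $(x_0, t_0)$ in its interior. For every integer $n \geq 1/a$ one has $\phi_n(x) = x$ on $K$, so the cutoff equation~\eqref{e:cutoff} coincides with~\eqref{e:main} on $K$, and $F^n$ is therefore a genuine viscosity solution to~\eqref{e:main} on $K$. Since $F^n \to F$ uniformly on $K$ and the Hamiltonian of~\eqref{e:main} is continuous on $K \times \R \times \R$, the classical stability theorem for viscosity solutions (perturbing a smooth test function that strictly touches $F$ at an interior extremum and finding nearby extrema of $F^n - \varphi$) yields that $F$ is a viscosity solution to~\eqref{e:main} on $K$. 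Since $(x_0, t_0)$ was arbitrary, $F$ solves~\eqref{e:main} on all of $(0,\infty)^2$. The only potentially delicate point, the $1/x$ singularity of the Hamiltonian, is absorbed entirely by the cutoff construction and never reappears in the limit argument; once restricted to $K$, the Hamiltonian meets the hypotheses of classical Crandall--Lions theory, so I do not expect a serious obstacle.
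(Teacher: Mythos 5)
Your proof is correct and follows essentially the same outline as the paper's: monotonicity (Lemma~\ref{lem:monotone}) plus the equi-Lipschitz bound (Lemma~\ref{lem:approximateLipschitz}) give locally uniform convergence, the barriers in \eqref{bound-F-n} give $0\leq F\leq mx$ and sublinearity, and stability of viscosity solutions gives the PDE in $(0,\infty)^2$. Your observation that on any compact $K\subset(0,\infty)^2$ one has $\phi_n\equiv x$ for $n$ large enough is a slightly sharper way to run the stability step than the paper's appeal to the uniform convergence $\phi_n\to x$, but the two arguments are interchangeable.
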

  \begin{proof}
    The locally uniform convergence follows from Lemmas~\ref{lem:monotone} and~\ref{lem:approximateLipschitz}.
    It is clear from the convergence and \eqref{bound-F-n} that $F$ is sublinear, and $0\leq F(x,t) \leq mx$ for all $(x,t)\in [0,\infty)^2$.
    The fact that $F$ is a viscosity solution to~\eqref{e:main} follows directly from the definition and the facts that
    $\set{F^n}$ converges to $F$ locally uniformly and $\set{\phi_n}$ converges to $x$ uniformly.
  \end{proof}

  \subsection{Uniqueness of solutions to  \eqref{e:main}}
  \begin{lemma}[Comparison Principle] \label{lem:CP}
    Let $u$ be a sublinear viscosity subsolution and $v$ be a sublinear viscosity supersolution to equation~\eqref{e:main}, respectively. Then $u\leq v$.
  \end{lemma}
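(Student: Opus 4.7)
I would adapt the Crandall--Ishii doubling-of-variables method to the singular setting by exploiting the structural bound $0\leq u,v\leq mx$ to confine the doubling maximum away from $\{x=0\}$. Fix $T>0$ and suppose, toward a contradiction, that $M := \sup_{[0,\infty)\times[0,T]}(u - v) > 0$. For small parameters $\varepsilon,\eta,\sigma>0$ (to be sent to zero in a specific order), consider
\begin{equation*}
\Phi(x,y,t,s) = u(x,t) - v(y,s) - \frac{(x-y)^{2}}{2\varepsilon} - \frac{(t-s)^{2}}{2\varepsilon} - \eta\bigl(\psi(x)+\psi(y)\bigr) - \frac{\sigma}{T-t} - \frac{\sigma}{T-s},
\end{equation*}
with $\psi(x)=\sqrt{1+x^{2}}$. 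The sublinearity of $u$ and $v$ combined with the growth of $\eta\psi$ forces $\Phi\to-\infty$ as $x+y\to\infty$, and the $\sigma/(T-\cdot)$ terms prevent $t,s\to T$; hence $\Phi$ attains a maximum at some $(x_{\varepsilon},y_{\varepsilon},t_{\varepsilon},s_{\varepsilon})\in[0,\infty)^{2}\times[0,T)^{2}$. For $\eta,\sigma$ small enough, the maximum value exceeds $M/2$.

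Next I would rule out boundary maxima. The bound $0\leq u(x,t)\leq mx$ (and $v\geq 0$) forces $u-v\to 0$ along any sequence with $x\to 0$, so any subsequential limit $\hat{x}$ must be strictly positive; thus $x_{\varepsilon},y_{\varepsilon}\geq x_{0}>0$ for small $\varepsilon$. The common initial datum $u(\cdot,0)=v(\cdot,0)=F_{0}$, together with upper/lower semicontinuity, rules out $t_{\varepsilon},s_{\varepsilon}\to 0$. Standard doubling estimates yield $(x_{\varepsilon}-y_{\varepsilon})^{2}/\varepsilon\to 0$ and $(t_{\varepsilon}-s_{\varepsilon})^{2}/\varepsilon\to 0$ as $\varepsilon\to 0$, so along a subsequence $(x_{\varepsilon},y_{\varepsilon},t_{\varepsilon},s_{\varepsilon})\to(\hat{x},\hat{x},\hat{t},\hat{t})$ with $\hat{x}\geq x_{0}>0$ and $\hat{t}\in(0,T)$.

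I then apply the viscosity sub- and supersolution inequalities with the natural quadratic test functions and subtract. Setting $p_{1}=(x_{\varepsilon}-y_{\varepsilon})/\varepsilon+\eta\psi'(x_{\varepsilon})$ and $p_{2}=(x_{\varepsilon}-y_{\varepsilon})/\varepsilon-\eta\psi'(y_{\varepsilon})$, one obtains
\begin{equation*}
\frac{u(x_{\varepsilon},t_{\varepsilon})}{x_{\varepsilon}} - \frac{v(y_{\varepsilon},s_{\varepsilon})}{y_{\varepsilon}} + \frac{\sigma}{(T-t_{\varepsilon})^{2}} + \frac{\sigma}{(T-s_{\varepsilon})^{2}} \leq \frac{1}{2}(p_{2}-p_{1})(p_{1}+p_{2}-2m-1).
\end{equation*}
Since $p_{2}-p_{1}=-\eta(\psi'(x_{\varepsilon})+\psi'(y_{\varepsilon}))=O(\eta)$ and $|p_{1}|,|p_{2}|$ are bounded (using a Lipschitz bound on $u$ or $v$), the right-hand side is $O(\eta)$. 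On the left, $x_{\varepsilon},y_{\varepsilon}\geq x_{0}$ together with $|x_{\varepsilon}-y_{\varepsilon}|\to 0$ gives, in the limit $\varepsilon\to 0$, a value $\geq (u-v)(\hat{x},\hat{t})/\hat{x}\geq M/(2\hat{x})>0$. Sending subsequently $\eta,\sigma\to 0$ produces the desired contradiction.

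\textbf{Main obstacle.} The principal difficulty is that the Hamiltonian $H(p,z,x)=\tfrac12(p-m)(p-m-1)+z/x-m$ is simultaneously singular in $x$ (via $z/x$) and has $z$-Lipschitz constant $1/x$ that blows up at the origin, so the classical comparison framework of Crandall--Lions does not apply off the shelf. The crucial new ingredient is the structural inequality $0\leq u\leq mx$ built into~\eqref{e:main}, which forces $u(0,t)=0$ and hence pushes the doubling maximum uniformly away from the singular boundary $\{x=0\}$; once this is secured, the singular coefficient $1/x_{\varepsilon}$ is bounded and the remaining argument reduces to handling the quadratic growth of $H$ in $p$ via a Lipschitz bound on the sub- or supersolution, which is natural since uniqueness is sought within the Lipschitz, sublinear class of Theorem~\ref{thm:wellposedness}.
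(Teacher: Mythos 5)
Your proposal is correct and takes a genuinely different route from the paper. The paper never doubles variables: it observes that $u$ is a subsolution to the cut-off problem~\eqref{e:cutoff} for every $n$ (since $\phi_n \geq x$ and $u \geq 0$), and that $v^n := v + \tfrac{1}{n}$ is a supersolution to~\eqref{e:cutoff} --- the key estimate near the origin being $nv+1-m \geq 0 \geq \tfrac{v}{x}-m$, which is precisely where $m\leq 1$ and $0\leq v\leq mx$ enter. Classical comparison for~\eqref{e:cutoff} then gives $u\leq v^n$, and one sends $n\to\infty$. Your argument instead attacks~\eqref{e:main} directly, using the same structural bound $0\leq u\leq mx$ but deployed differently: to force the doubling maximum uniformly off $\{x=0\}$ and thereby tame the singular coefficient $1/x$. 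This is a sound observation, and after that the remaining issue is only the quadratic growth of $H$ in $p$, which you correctly note requires a Lipschitz bound on $u$ or $v$ to control $|x_\varepsilon-y_\varepsilon|/\varepsilon$ and hence $|p_1+p_2|$; the lemma's hypotheses say sublinear rather than Lipschitz, but comparison is only ever invoked in the Lipschitz class of Theorem~\ref{thm:wellposedness}, so this is a benign mismatch. What each approach buys: the paper's version is short because it piggybacks on the cut-off scheme already built for existence; your version is self-contained and --- interestingly --- does not appear to use $m\leq 1$ at all, which suggests the comparison principle itself extends to $m>1$ within the Lipschitz sublinear class (what breaks for $m>1$ is existence, e.g.\ the constant $0$ fails to be a subsolution since $\tfrac{1}{2}m(m-1)>0$). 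One small point worth spelling out in a full write-up: in the final contradiction, after sending $\varepsilon\to 0$, the term $\bigl(u(\hat x,\hat t)-v(\hat x,\hat t)\bigr)/\hat x$ need not have a positive lower bound uniform in $\eta$ (since $\hat x$ may drift to $\infty$ as $\eta\to 0$), so it is really the strictly positive $\sigma$-terms $\sigma/(T-t_\varepsilon)^2+\sigma/(T-s_\varepsilon)^2\geq 2\sigma/T^2$ that deliver the contradiction; fixing $\sigma$ and letting $\eta\to 0$ first is the right order of limits.
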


  \begin{proof}
    We have that for every $n \in \N$, $u$ is a subsolution, and $v^n \defeq v + \frac{1}{n}$ is a supersolution to equation~\eqref{e:cutoff}, respectively.
    The subsolution is clear to see. 
    
    %In order to see the supersolution, we only need to care about the sublinear supersolutions, i.e.,
    %\begin{equation*}
    % \frac{v}{x} \leq m 
    %\end{equation*}
    %since we can always replace $v$ by $\min \set{ mx, v }$. 
    To check the supersolution property, we note that, since $m\leq 1$,
    \begin{equation*}
      \frac{v + \frac{1}{n}}{\phi_n} -m = \begin{cases}
        \frac{v}{x} + \frac{1}{nx} - m\geq \frac{v}{x} - m, &\text{ for } x \geq \frac{1}{n} \,, \\
        nv + 1 -m \geq 0 \geq \frac{v}{x} -m, &\text{ for } x < \frac{1}{n} \,.
      \end{cases}
    \end{equation*}
    Therefore, 
    \begin{align*}
     & \partial_t v^n + \frac{1}{2} (\partial_x v^n - m) ( \partial_x v^n -m - 1) + \frac{v^n}{\phi_n(x)} - m \\
     &\geq  \partial_t v + \frac{1}{2} (\partial_x v - m) ( \partial_x v -m - 1) + \frac{v}{x} - m  \geq 0
    \end{align*}
    in the viscosity sense. By the classical theory of viscosity solution applied to equation~\eqref{e:cutoff}, we imply that 
    \begin{equation*}
      u \leq v^n \,.
    \end{equation*}
    But as $v^n \to v$ locally uniformly as $n\to \infty$, we then conclude
    \begin{equation*}
      u \leq v \,,
    \end{equation*}
   as desired. 
  \end{proof}

%   \begin{corollary}
%     There is at most one Lipschitz solution $F$ to equation~\eqref{e:main} that satisfies
%     \begin{equation*}
%       \frac{F}{x} \leq m \,.
%     \end{equation*}
%   \end{corollary}
%   \begin{proof}
%     This follows directly from the comparison principle.
%   \end{proof}

Let us now give the proof of Theorem \ref{thm:wellposedness}.
\begin{proof}[{\bf Proof of Theorem \ref{thm:wellposedness}}]
By Lemma \ref{lem:existence}, \eqref{e:main} admits a solution $F$, which is Lipschitz on $[0,\infty)^2$, and is sublinear in $x$.
Lemma \ref{lem:CP} then yields the uniqueness of $F$.
\end{proof}

Corollary \ref{cor:m leq 1} then follows immediately.

\begin{proof}[{\bf Proof of Corollary \ref{cor:m leq 1}}]
Let $c$ be a mass-conserving solution to \eqref{eq:CF} with $m=m_1(0) \in (0,1]$.
Let $F, F_0$ be the Bernstein transforms of $c, c_0=c(\cdot,0)$, respectively.
Then, $F$ is a solution to \eqref{e:viscosity}, $F$ is sublinear in $x$, and $F \in C^\infty((0,\infty)^2) \cap C^1([0,\infty)^2)$.
In particular, $F$ is the unique sublinear viscosity solution to \eqref{e:viscosity}.
This gives the uniqueness of $c$.
\end{proof}

\section{Regularity results}

\subsection{Non-existence of $C^1$ sublinear solutions when $m>1$} \label{S:nonExistence}
We first show the impossibility of $C^1$ sublinear solutions when $m>1$.
It is important to note that the sublinear requirement is used crucially here as \eqref{e:main} admits special solutions $\psi_1(x,t) = mx$ and $\psi_2(x,t)=(m-1)x$  for all $(x,t) \in [0,\infty)^2$, which are both linear in $x$.

\begin{proof}[{\bf Proof of Theorem \ref{thm:no C1 sln}}]
  We proceed by contradiction and suppose that such a solution $F$ exists. Then,
  \begin{equation*}
    F(0,t) = 0 \quad \text{ and } \quad \partial_t F(0,t)=0\,. 
  \end{equation*} 
  Let $x \to 0^+$ in \eqref{e:main} and use the fact that 
  \[
  \partial_x F(0,t)=\lim_{x \to 0^+}\frac{F(x,t)-F(0,t)}{x}=\lim_{x \to 0^+}\frac{F(x,t)}{x}
  \]
  to yield
  \begin{equation*}
     \frac{1}{2}( \partial_x F(0,t) - m)( \partial_x F(0,t) - m -1) + \partial_x F(0,t) - m =0 \,.
  \end{equation*}
  Thus, either $\partial_x F(0,t) = m$ or $\partial_x F(0,t) = m-1$. In other words, $\partial_x F(0,t) \geq m-1>0$.
  Now, fix $\sigma \in (0, m-1)$. By sublinearity in $x$ of $F$, for a fixed $t>0$, there exists $x_t>0$ such that
  \begin{equation*}
    \varphi (t) \defeq \max_{x\in [0,\infty)} \left( F(x,t) - \sigma x \right) = F(x_t, t) - \sigma x_t > 0 \,.
  \end{equation*}
  The computations from here to the end of this proof are all justified in the viscosity sense.
  Observe that, at $x = x_t$, $\partial_x F(x_t,t) = \sigma$ and $F(x_t,t)/x_t >\sigma$. Therefore,
  \begin{equation*}
    \partial_t F(x_t,t) \leq -\frac{1}{2} (\sigma - m)(\sigma -m -1) - (\sigma - m) = -\frac{1}{2} (\sigma - m)(\sigma - m +1) \defeq -c_0 \,.
  \end{equation*}
  Furthermore,
  \begin{align*}
    \varphi'(t) &= \lim_{s \to 0^+} \frac{ \varphi(t) - \varphi(t -s)}{s} \\
    &= \lim_{s \to 0^+} \frac{ [F( x_t, t) - \sigma x_t ] - [F(x_{t-s}, t-s) - \sigma x_{t-s}]}{s} \\
    &\leq \lim_{s \to 0^+} \frac{ [ F( x_t, t) - \sigma x_t ] - [ F(x_t, t- s) - \sigma x_t ]}{ s} \\
    &= \partial_t F(x_t,t) \leq -c_0 <0\,.
  \end{align*}
  Therefore, there exists $T>0$ so that $\varphi(T) <0$, which is a contradiction.
\end{proof}

\begin{proof}[\bf{Proof of Corollary \ref{cor:m>1}}]
Assume by contradiction that there exists a mass-conserving solution $c$ to \eqref{eq:CF} with $m=m_1(0) >1$.
Let $F, F_0$ be the Bernstein transforms of $c, c_0=c(\cdot,0)$, respectively.
Then, $F$ is a solution to \eqref{e:viscosity}, $F$ is sublinear in $x$, and $F \in C^\infty((0,\infty)^2) \cap C^1([0,\infty)^2)$.
This of course contradicts Theorem \ref{thm:no C1 sln}.
The proof is complete.
\end{proof}

\subsection{The case $0<m \leq 1$} \label{S:m between 0 and 1}

In the case $0<m\leq 1$, a central topic we set out to study is when is it that classical solutions to the equation~\eqref{e:main} exist for all time.
This is not a simple task as viscosity solutions to Hamilton-Jacobi equations are Lipschitz, but might not be $C^1$ in general.

To do this, we study another regularized version of equation~\eqref{e:main} by adding a viscosity term and then study the vanishing viscosity limit. Specifically, for $\epsilon>0$, we consider
\begin{equation} \label{e:viscosity}
   \begin{cases}
     \partial_t F  + \frac{1}{2}(\partial_x F - m)(\partial_x F - m -1) + \frac{F}{x} - m &= \epsilon a(x) \partial_{xx}F \,, \\
    F(x,0) &= F_0(x) \,, \\
    F(0,t) &= 0 \,. \\
  \end{cases}
\end{equation}
In this section, we use assumptions \eqref{A:firstDerivativeBound}--\eqref{A:secondDerivativeBound2} whenever needed.

\medskip

We give ourselves some freedom of choices for the nonnegative function $a(x)$. This freedom gives us some flexibility in proving bounds.

%%% UPPER BOUND 
\subsubsection{Concavity of $F$ when $0<m \leq 1$}

In this section, we always  assume {\rm \eqref{A:firstDerivativeBound}--\eqref{A:secondDerivativeBound1}}, and $F_0$ is sublinear, and $ 0 \leq F_0(x) \leq mx$.
For each $\epsilon>0$, let $F^\epsilon_1$ be the classical solution to equation~\eqref{e:viscosity} corresponding to $a \equiv 1$.
By regularity theory for parabolic equations, $F^\epsilon_1 \in C^\infty((0,\infty)^2) \cap C^{2}_1([0,\infty)\times (0,\infty))$ (for example, see Ladyženskaja, Solonnikov, Ural’ceva~\cite{LadyzenskajaSolonnikovUralceva68}, Lieberman~\cite{Lieberman96}, Krylov~\cite{Krylov96}).
Here, $C^{2}_1([0,\infty)\times (0,\infty))$ is the space of functions which  are $C^2$ in $x$ and  $C^1$ in $t$ on $[0,\infty)\times (0,\infty)$.

\begin{lemma} \label{lem:gradientbound1}
  Assume {\rm \eqref{A:firstDerivativeBound}--\eqref{A:secondDerivativeBound1}}.
  Assume further that $F_0$ is sublinear and $ 0 \leq F_0(x) \leq mx$.
  For each $\epsilon>0$, let $F^\epsilon_1$ be the classical solution to equation~\eqref{e:viscosity} corresponding to $a \equiv 1$. 
  Then, 
  \begin{equation} \label{ine:gradientbound1}
    0 \leq \partial_x F^\epsilon_1 \leq m\,.
  \end{equation}
\end{lemma}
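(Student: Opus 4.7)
The plan is to apply the parabolic maximum principle to $w \defeq \partial_x F^\epsilon_1$, after first pinning down the pointwise envelope $0 \leq F^\epsilon_1 \leq mx$, which will both supply the needed boundary data for $w$ and generate a sign on one of the forcing terms.

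\textbf{Step 1 (Envelope for $F^\epsilon_1$).} I would first observe that the two ``trivial'' profiles of the original Hamilton--Jacobi equation remain well-behaved for the viscous equation with $a \equiv 1$: the affine function $\psi(x)=mx$ makes every term of \eqref{e:viscosity} vanish identically, so it is an exact solution, while $F \equiv 0$ gives the residual $\tfrac12(-m)(-m-1)-m = \tfrac{m(m-1)}{2}\le 0$ since $m\le 1$, so it is a subsolution. Combined with $0\le F_0\le mx$ and the homogeneous boundary condition $F^\epsilon_1(0,t)=0$, the classical parabolic comparison principle for the (now nondegenerate) equation yields $0\le F^\epsilon_1(x,t)\le mx$ on $[0,\infty)^2$.

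\textbf{Step 2 (Equation for $w$).} Differentiating \eqref{e:viscosity} with $a\equiv 1$ in $x$, and using that $\tfrac{d}{dp}\bigl[\tfrac12(p-m)(p-m-1)\bigr]=p-m-\tfrac12$, I get the linear parabolic equation
\[
\partial_t w + \left(w-m-\tfrac12\right)w_x + \frac{w}{x} - \frac{F^\epsilon_1}{x^2} = \epsilon\, w_{xx}
\qquad\text{on } (0,\infty)^2,
\]
with initial data $w(x,0)=F_0'(x)\in[0,m]$ by \eqref{A:firstDerivativeBound}. The boundary data are $w(0,t)=\partial_x F^\epsilon_1(0,t)\in[0,m]$, which I extract from Step 1 via $\lim_{x\to 0^+} F^\epsilon_1/x = \partial_x F^\epsilon_1(0,t)$ together with $0\le F^\epsilon_1\le mx$.

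\textbf{Step 3 (Two maximum principle arguments).} For the lower bound, suppose $w$ attained a negative interior minimum at $(x_0,t_0)$ with $x_0,t_0>0$. Then $w_x=0$, $w_{xx}\ge 0$, $\partial_t w\le 0$, and the equation would give $\partial_t w = \epsilon w_{xx} - w/x_0 + F^\epsilon_1(x_0,t_0)/x_0^2$. Each term on the right is nonnegative, and $-w/x_0>0$ strictly, so $\partial_t w>0$---a contradiction. For the upper bound, I would set $v=w-m$, which satisfies
\[
\partial_t v + \left(v-\tfrac12\right)v_x + \frac{v}{x} + \frac{mx-F^\epsilon_1}{x^2} = \epsilon\, v_{xx};
\]
here Step 1 makes the new inhomogeneous term $(mx-F^\epsilon_1)/x^2$ nonnegative. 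At a hypothetical interior positive maximum of $v$, the same computation yields $\partial_t v<0$ strictly, again a contradiction. Since $v(\cdot,0)\le 0$ by \eqref{A:firstDerivativeBound} and $v(0,\cdot)\le 0$ by Step 1, this closes the argument.

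\textbf{Main obstacle.} The delicate points are the unboundedness of the spatial domain and the singular coefficient $1/x$ at the left boundary. I would handle the first by the standard penalization $\tilde v = v - \delta(1+t+x^2)$ (and symmetrically for $w$), run the maximum principle on a bounded subdomain where an interior extremum is forced, then send $\delta\to 0$; the growth of the penalty dominates the bounded equation coefficients for large $x$ because $F^\epsilon_1/x^2$ stays bounded thanks to $F^\epsilon_1\le mx$. The singular term $1/x$ near $x=0$ is harmless in the extremum test because at any contemplated interior extremum we sit at $x_0>0$ where the coefficients are smooth; the boundary trace at $x=0$ is handled once and for all in Step 1 by the envelope, which is where $m\le 1$ is used essentially.
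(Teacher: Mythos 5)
Your proof is correct, and it differs from the paper's in one structural choice. Where you read off the signs of $\tfrac{w}{x}$ and $\tfrac{F^\epsilon_1}{x^2}$ (resp.\ $\tfrac{v}{x}$ and $\tfrac{mx-F^\epsilon_1}{x^2}$) separately at the extremum, the paper combines them into a single term via a Taylor expansion: it writes $\tfrac{F^\epsilon_1(x,t)}{x}=\partial_xF^\epsilon_1(\alpha x,t)$ for some $\alpha\in(0,1)$, so that the zero-order forcing becomes $\tfrac{\partial_xF^\epsilon_1(x,t)-\partial_xF^\epsilon_1(\alpha x,t)}{x}$, which is automatically nonnegative at a spatial maximum of $\partial_xF^\epsilon_1$ and nonpositive at a minimum. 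Your decomposition is more elementary and yields an immediately strict contradiction at the extremum (the $-w/x_0$ and $-v/x_0$ terms are strictly signed), at the cost of needing two separate derivations and the envelope $0\le F^\epsilon_1\le mx$ up front; the Taylor trick handles both bounds with a single reformulation and turns out to be the device the paper reuses at second order (in the proof of concavity), so it has some economy. You are also more careful than the paper in one respect: you actually prove the envelope via the subsolution/supersolution pair $0$ and $mx$ for the viscous equation, whereas the paper asserts it without comment. Your penalization $v-\delta(1+t+x^2)$ versus the paper's $\partial_xF^\epsilon_1-\delta x$ is a cosmetic difference; the quadratic penalty does require the additional (true but unstated) observation that $\delta x_\delta\to 0$ as $\delta\to 0$ so the extra transport term $(v-\tfrac12)\cdot 2\delta x_\delta$ vanishes in the limit, which the linear penalty avoids since there $v_x=\delta$ exactly.
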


\begin{proof}
  Firstly, as $0 \leq F^\epsilon_1(x,t) \leq mx$ for each $t\geq 0$, we imply that 
  \begin{equation}\label{eq:grad0}
  0 \leq \partial_x F^\epsilon_1(0,t) \leq m \,.
  \end{equation}
 Differentiate \eqref{e:viscosity} to get
 \[
  \cL^\epsilon[\partial_x F^\epsilon_1] + \left( \frac{\partial_x F^\epsilon_1}{x} - \frac{ F^\epsilon_1}{x^2}\right)=0\,,
 \]
   where
  \begin{equation*}
    \cL^\epsilon[\phi] \defeq \partial_t \phi + \partial_x F^\epsilon_1 \partial_x \phi - ( m + \frac{1}{2}) \partial_x \phi - \epsilon \partial_x^2 \phi 
  \end{equation*}
    is a linear parabolic operator.

By Taylor's expansion, for each $(x,t)\in (0,\infty)^2$, there exists $\alpha = \alpha(x,t) \in (0,1)$ so that 
\[
0=F^\epsilon_1(0,t) = F^\epsilon_1(x,t) - x \partial_x F^\epsilon_1(\alpha x,t) \,.
\]
Thus, 
 \[
  \cL^\epsilon[\partial_x F^\epsilon_1] +  \frac{\partial_x F^\epsilon_1(x,t)-\partial_x F^\epsilon_1(\alpha x,t)}{x}=0 \,.
 \]
 We only show here that $\partial_x F^\epsilon_1 \leq m$ by the usual maximum principle. 
 The lower bound can be done in a similar manner.
 Suppose that for some $T>0$, there exists $x_0 \geq 0$ such that
 \begin{equation*}
    \max_{ [0,\infty) \times [0,T]} \partial_x F^\epsilon_1 = \partial_x F^\epsilon_1(x_0,T) \,.
  \end{equation*}
  Thanks to \eqref{eq:grad0}, we only need to consider the case that $x_0>0$.
  At this point $\partial_x F^\epsilon_1(x_0,T) \geq \partial_x F^\epsilon_1(\alpha x_0,T)$, and so $ \cL^\epsilon[\partial_x F^\epsilon_1](x_0,T) \leq 0$.
  By repeating the proof of the maximum principle for a linear parabolic operator, we obtain the desired conclusion that $\partial_x F^\epsilon_1 \leq m$. 
\end{proof}

\begin{remark}
In the use of the maximum principle, to keep the presentation clean, it is typically the case that one assumes that maximum points of a bounded continuous function ($\partial_x F^\epsilon_1$ in the above proof) occur. To justify this point rigorously, one can consider maximum of $\partial_x F^\epsilon_1(x,t) - \delta x$ on $[0,\infty)^2$, for $\delta>0$, and let $\delta \to 0^+$.
\end{remark}

\begin{lemma}\label{lem:boundaryEst1}
  Let $F^\epsilon_1$ be the classical solution to equation~\eqref{e:viscosity} with $a \equiv 1$. 
  Then, 
  \begin{equation} \label{ine:boundaryEst1}
  \partial_x^2 F^\epsilon_1(0,t) \leq 0 \quad \text{ for all } t\geq 0 \,.
  \end{equation}
\end{lemma}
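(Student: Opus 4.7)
The plan is to pass to the limit $x \to 0^+$ in equation~\eqref{e:viscosity} and read off a one-point identity for $\partial_x^2 F^\epsilon_1(0,t)$, then use the gradient bound from Lemma~\ref{lem:gradientbound1} together with $m \le 1$ to conclude.

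First, since $F^\epsilon_1(0,t)=0$ for every $t \ge 0$, we have $\partial_t F^\epsilon_1(0,t)=0$. Next, because $F^\epsilon_1 \in C^{2}_1([0,\infty)\times(0,\infty))$ and $F^\epsilon_1(0,t)=0$, a Taylor expansion in $x$ about $0$ gives
\[
\frac{F^\epsilon_1(x,t)}{x} \;=\; \partial_x F^\epsilon_1(0,t) + \frac{x}{2}\,\partial_x^2 F^\epsilon_1(0,t) + o(x) \quad\text{as } x \to 0^+,
\]
so in particular $\lim_{x\to 0^+} F^\epsilon_1(x,t)/x = \partial_x F^\epsilon_1(0,t)$. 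Sending $x \to 0^+$ in \eqref{e:viscosity} at each fixed $t>0$ and using continuity of $\partial_x F^\epsilon_1$ and $\partial_x^2 F^\epsilon_1$ up to $\{x=0\}$ (from the regularity theory invoked before the lemma) yields
\[
\tfrac{1}{2}\bigl(\partial_x F^\epsilon_1(0,t)-m\bigr)\bigl(\partial_x F^\epsilon_1(0,t)-m-1\bigr) + \partial_x F^\epsilon_1(0,t) - m \;=\; \epsilon\,\partial_x^2 F^\epsilon_1(0,t).
\]

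Writing $p = \partial_x F^\epsilon_1(0,t)$, the left-hand side factors as $\tfrac{1}{2}(p-m)(p-m+1)$. By Lemma~\ref{lem:gradientbound1} we have $0 \le p \le m$, so $p-m \le 0$. On the other hand, since $0 < m \le 1$ and $p \ge 0$, we have $p - m + 1 \ge 1 - m \ge 0$. Therefore $(p-m)(p-m+1) \le 0$, and dividing by $2\epsilon>0$ gives $\partial_x^2 F^\epsilon_1(0,t) \le 0$, as claimed.

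I expect the only mildly delicate step to be the passage to the limit $x\to 0^+$ of the nonlocal-looking term $F^\epsilon_1/x$; this is handled cleanly by the Taylor expansion above thanks to the parabolic regularity giving $F^\epsilon_1 \in C^{2}_1$ up to $x=0$, combined with the Dirichlet condition $F^\epsilon_1(0,t)=0$. Everything else is algebra together with the already-established sign of $p-m$.
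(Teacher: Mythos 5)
Your proof is correct and follows the same approach as the paper's: send $x \to 0^+$ in \eqref{e:viscosity} using $F^\epsilon_1(0,t)=0$, obtain the one-point identity $\tfrac{1}{2}(p-m)(p-m+1) = \epsilon\,\partial_x^2 F^\epsilon_1(0,t)$ with $p = \partial_x F^\epsilon_1(0,t)$, and then use the gradient bound from Lemma~\ref{lem:gradientbound1} to determine the sign. Your writeup is slightly more explicit than the paper's in spelling out the factoring and the role of $m \le 1$ in making $p-m+1 \ge 0$, but it is otherwise the identical argument.
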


\begin{proof}
 As $F^\epsilon_1(0,t)=0$ for all $t\geq 0$, $\partial_t F^\epsilon_1(0,t) = 0$ and 
 \[
 \lim_{x\to 0^+} \frac{F^\epsilon_1(x,t)}{x} = \partial_x F^\epsilon_1(0,t)\,.
 \]
 Let $x \to 0^+$ in \eqref{e:viscosity} and use the above to get
  \begin{equation} \label{e:boundaryEq}
    \frac{1}{2} ( \partial_x F^\epsilon_1 (0,t) - m) ( \partial_x F^\epsilon_1(0,x) -  m + 1) = \epsilon \partial_x^2 F^\epsilon_1(0,t) \,,
  \end{equation}
  which, together with \eqref{ine:gradientbound1}, yields~\eqref{ine:boundaryEst1}. 
\end{proof}

We are now ready to prove that $F^\epsilon_1$ is concave in $x$.

\begin{lemma} \label{l:interiorUpperBoundApprox}
  Assume {\rm \eqref{A:firstDerivativeBound}--\eqref{A:secondDerivativeBound1}}.
  Assume further that $F_0$ is sublinear and $ 0 \leq F_0(x) \leq mx$.
  For each $\epsilon>0$, let $F^\epsilon_1$ be the classical solution to equation~\eqref{e:viscosity} corresponding to $a \equiv 1$. 
  Then, for $(x,t) \in (0,\infty)^2$,
  \begin{equation} \label{ine:interiorUpperBoundApprox}
    \partial_x^2 F^\epsilon_1 \leq 0\,.
  \end{equation}
\end{lemma}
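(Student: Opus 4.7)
The plan is to derive a PDE for $w := \partial_x^2 F^\epsilon_1$ on the open region $(0,\infty)^2$, where $F^\epsilon_1$ is smooth by parabolic regularity, and apply the maximum principle. Setting $G := \partial_x F^\epsilon_1$ and differentiating \eqref{e:viscosity} twice in $x$, a straightforward computation gives
\[
\partial_t w + w^2 + \left(G - m - \tfrac{1}{2}\right)\partial_x w + \frac{w}{x} - \frac{2G}{x^2} + \frac{2 F^\epsilon_1}{x^3} = \epsilon\, \partial_x^2 w.
\]
The pair of singular lower-order terms can be rewritten as a single weighted integral of $w$. Using $G(x,t) = G(0,t) + \int_0^x w(r,t)\,dr$ and, after one more integration, $F^\epsilon_1(x,t) = x\, G(0,t) + \int_0^x (x-r)\, w(r,t)\,dr$, a one-line calculation yields the identity
\[
\frac{2F^\epsilon_1}{x^3} - \frac{2G}{x^2} = -\frac{2}{x^3}\int_0^x r\, w(r,t)\,dr.
\]
Thus the PDE for $w$ is effectively local up to a single, sign-controllable nonlocal term.

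Next I would read off the parabolic boundary information. Assumption \eqref{A:secondDerivativeBound1} gives $w(\cdot,0) = F_0'' \leq 0$, while Lemma \ref{lem:boundaryEst1} gives $w(0,t) \leq 0$ for all $t \geq 0$. Suppose, for contradiction, that $w$ takes a positive value somewhere in $(0,\infty)^2$. Fix any $T>0$ where this occurs and, after a standard perturbation of the form $w(x,t) - \delta x - \delta t$ (using the global Lipschitz bound of Lemma \ref{lem:approximateLipschitz} and interior Schauder estimates to guarantee the perturbed function attains its supremum in a compact region of $[0,\infty)\times[0,T]$), we may assume the supremum is attained at some $(x_0,t_0)$ with $x_0>0$, $t_0\in(0,T]$, and $w_0 := w(x_0,t_0) > 0$. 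At this point, $\partial_t w \geq 0$, $\partial_x w = 0$ (up to $O(\delta)$), and $\partial_x^2 w \leq 0$, and, crucially, $w(r,t_0) \leq w_0$ for all $r \in [0,x_0]$. Consequently,
\[
-\frac{2}{x_0^3}\int_0^{x_0} r\, w(r,t_0)\,dr \;\geq\; -\frac{2 w_0}{x_0^3}\int_0^{x_0} r\,dr \;=\; -\frac{w_0}{x_0},
\]
and substituting into the PDE for $w$ at $(x_0,t_0)$ yields
\[
0 \;\geq\; \epsilon\, \partial_x^2 w \;=\; \partial_t w + w_0^2 + \frac{w_0}{x_0} - \frac{2}{x_0^3}\int_0^{x_0} r\, w(r,t_0)\,dr + O(\delta) \;\geq\; w_0^2 + O(\delta),
\]
contradicting $w_0 > 0$ once $\delta \to 0^+$.

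I expect the main obstacle to be two-fold. First, the equation for $w$ is truly nonlocal through the integral term $-\tfrac{2}{x^3}\int_0^x r\, w(r,t)\,dr$, so the standard parabolic maximum principle does not apply verbatim; the key nonobvious ingredient is the exact cancellation between this nonlocal term and the dissipative term $w/x$ at a positive maximum, which leaves $w_0^2$ as the sole surviving obstruction. Second, the spatial domain is unbounded, so a perturbation is needed to guarantee that the supremum is attained; this is routine but requires combining the global Lipschitz control from Lemma \ref{lem:approximateLipschitz} with the decay structure of the viscous problem.
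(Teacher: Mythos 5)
Your argument is correct and takes essentially the same approach as the paper: differentiate \eqref{e:viscosity} twice, apply the maximum principle, and exploit that the singular lower-order terms combine to have the right sign at a positive interior maximum (so only $w^2$ survives, forcing $w\le 0$). The sole difference is that you rewrite $\tfrac{w}{x}+\tfrac{2(F^\epsilon_1-x\partial_x F^\epsilon_1)}{x^3}$ via the integral form of Taylor's remainder as $\tfrac{w(x,t)}{x}-\tfrac{2}{x^3}\int_0^x r\,w(r,t)\,dr$, whereas the paper uses the Lagrange form to obtain $\tfrac{w(x,t)-w(\theta x,t)}{x}$ with $\theta\in(0,1)$; both representations yield identical sign control at a maximum.
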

\begin{proof}
  We proceed by the maximum principle. 
  Differentiating \eqref{e:viscosity} twice in $x$, we get 
  \begin{equation} \label{e:parabolic}
    \cL^{\epsilon} [ \partial_x^2 F_1^\epsilon ] +  (\partial_x^2 F_1^\epsilon)^2
  + \paren[\Big]{ \frac{\partial_x^2 F^\epsilon_1}{x} + \frac{2( F^\epsilon_1 - x \partial_x F^\epsilon_1)}{x^3}    } = 0 \,.
  \end{equation}
  Recall that
  \begin{equation*}
    \cL^\epsilon[\phi] = \partial_t \phi + \partial_x F^\epsilon_1 \partial_x \phi - ( m + \frac{1}{2}) \partial_x \phi - \epsilon \partial_x^2 \phi\,.
  \end{equation*}
  By Taylor's expansion, for each $(x,t)\in (0,\infty)^2$, there exists $\theta = \theta(x,t) \in (0,1)$ so that 
  \begin{equation*}
    0 = F^\epsilon_1( 0 ,t) = F^\epsilon_1 (x,t) -x \partial_x F^\epsilon_1 (x,t) + \frac{x^2}{2} \partial_x^2 F^\epsilon_1 (\theta x,t) \,.
  \end{equation*}
  This implies
  \begin{equation*}
    \frac{\partial_x^2 F^\epsilon_1}{x} + \frac{ 2(F^\epsilon_1 - x \partial_x F^\epsilon_1)}{x^3} = \frac{ \partial_x^2 F^\epsilon_1 (x,t) - \partial_x^2 F^\epsilon_1 (\theta x,t) }{x} \,,
  \end{equation*}
  which, by plugging into equation~\eqref{e:parabolic}, gives us
  \begin{equation*}
    \cL^\epsilon [ \partial_x^2 F^\epsilon_1 ] + (\partial_x^2 F^\epsilon_1)^2 + \frac{ \partial_x^2 F^\epsilon_1(x,t) - \partial_x^2 F^\epsilon_1 (\theta x, t)}{ x} = 0 \,.
  \end{equation*}
  Let us now show that $\partial_x^2 F^\epsilon_1 \leq 0$ by the usual maximum principle.
  Suppose now for some $T>0$, there exists $x_0 \geq 0$ so that  
  \begin{equation*}
    \max_{ [0,\infty) \times [0,T]} \partial_x^2 F^\epsilon_1 = \partial_x^2 F^\epsilon_1(x_0,T) \,.
  \end{equation*}
  Thanks to \eqref{ine:boundaryEst1}, we might assume further that $x_0>0$.
  By the maximum principle,
  \begin{equation*}
    \cL^\epsilon[ \partial_x^2 F^\epsilon_1] (x_0, T) \geq 0 \quad \text{ and } \quad 
    \partial_x^2 F^\epsilon_1 (x_0, T) - \partial_x^2 F^\epsilon_1 (\theta x_0, T) \geq 0 \,,
  \end{equation*}
  which yields 
  \begin{equation*}
    (\partial_x^2 F^\epsilon_1(x_0,T) )^2 \leq 0 \quad \Rightarrow  \quad \partial_x^2 F^\epsilon_1 (x_0, T) = 0 \,.
  \end{equation*}
  This implies $\partial_x^2 F^\epsilon_1 \leq 0$, as desired.
\end{proof}

Then, Lemma \ref{lem:Fconcave} is an immediate consequence of Lemmas \ref{lem:gradientbound1} and \ref{l:interiorUpperBoundApprox}.

%% LOWER BOUND

\subsubsection{Regularity of $F$ in case $0<m<\frac{1}{2}$}

  Suppose $0<m < \frac{1}{2}$.
  Here, we always  assume {\rm \eqref{A:firstDerivativeBound}--\eqref{A:secondDerivativeBound2}}, and $F_0$ is sublinear and $ 0 \leq F_0(x) \leq mx$.
  Let $a \in C^\infty([0,\infty))$ be a nondecreasing and concave function such that
  \begin{equation}  \label{e:aDefinition}
    a(x) = \begin{cases}
      x \,, & x \in [0,1] \,, \\
      2 \,, & x \in [3,\infty) \,.
    \end{cases}
  \end{equation} 
  For each $\epsilon>0$,   let $F^\epsilon_2$ be the viscosity solution to equation~\eqref{e:viscosity} corresponding to the above $a$.
  It is worth noting that in this case, \eqref{e:viscosity} is a degenerate parabolic equation, and one needs to be careful with regularity of $F^\epsilon_2$ at $x=0$.
  Of course, $F^\epsilon_2 \in C^\infty((0,\infty)^2)$, but boundary regularity is not obvious. 
  In the following, we study further properties of $F^\epsilon_2$ by using the specific structure of the equation.

\begin{lemma}\label{lem:F2concave}
For each $\epsilon>0$, let $F^\epsilon_2$ be the viscosity solution to equation~\eqref{e:viscosity} with $a$ defined as in~\eqref{e:aDefinition}. 
Then, $F^\epsilon_2$ is concave in $x$ and
\[
0 \leq \partial_x F^\epsilon_2 \leq m \quad \text{ in } (0,\infty)^2 \,.
\]
\end{lemma}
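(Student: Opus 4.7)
The plan is to adapt the maximum-principle arguments of Lemmas \ref{lem:gradientbound1}, \ref{lem:boundaryEst1}, and \ref{l:interiorUpperBoundApprox}. Two new features require care: the coefficient $a$ is non-constant, so differentiation produces extra terms involving $a'$ and $a''$; and $a$ vanishes at $x=0$, so the equation is genuinely degenerate there and boundary regularity is delicate. I would first record the Lipschitz bound $|\partial_x F^\epsilon_2| + |\partial_t F^\epsilon_2| \leq C$ uniform in $\epsilon$, obtained by comparison with $F_0(x) \pm Ct$ exactly as in Lemma \ref{lem:approximateLipschitz}, together with the two-sided bound $0 \leq F^\epsilon_2(x,t) \leq mx$ from comparison with the stationary barriers $0$ and $mx$. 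Because $a(0)=0$, sending $x \to 0^+$ in \eqref{e:viscosity} then yields the algebraic identity $\frac{1}{2}(u_0 - m)(u_0 - m + 1) = 0$ for $u_0 \defeq \partial_x F^\epsilon_2(0,t)$, and the constraint $0 \leq u_0 \leq m$ forces $u_0 = m$; matching next-order terms further pins down $\partial_x^2 F^\epsilon_2(0,t) = 0$.

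For the gradient bound, I would differentiate \eqref{e:viscosity} once and set $u = \partial_x F^\epsilon_2$. A first-order Taylor expansion around $x = 0$ rewrites $F^\epsilon_2(x,t)/x$ as $u(\alpha x, t)$ for some $\alpha \in (0,1)$, converting the equation into
\begin{equation*}
\mathcal{L}^\epsilon[u] + \frac{u(x,t) - u(\alpha x, t)}{x} = 0,
\end{equation*}
where $\mathcal{L}^\epsilon[\phi] \defeq \partial_t \phi + \bigl(u - m - \tfrac{1}{2} - \epsilon a'(x)\bigr)\partial_x \phi - \epsilon a(x)\,\partial_x^2 \phi$. On the parabolic boundary $u$ lies in $[0,m]$: by \eqref{A:firstDerivativeBound} at $t=0$ and by the previous step at $x=0$. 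The maximum-principle argument of Lemma \ref{lem:gradientbound1}, whose nonlocal term has the favorable sign at an extremum, then delivers $0 \leq \partial_x F^\epsilon_2 \leq m$ throughout $(0,\infty)^2$.

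For concavity, I differentiate once more and write $w = \partial_x^2 F^\epsilon_2$. Using the Taylor expansion of $F^\epsilon_2$ to second order around $x = 0$ together with the first mean value theorem for integrals, the singular combination $\frac{w}{x} - \frac{2\partial_x F^\epsilon_2}{x^2} + \frac{2F^\epsilon_2}{x^3}$ collapses to $\frac{w(x,t) - w(\tilde\theta x, t)}{x}$ for some $\tilde\theta \in (0,1)$, producing
\begin{equation*}
\widetilde{\mathcal{L}}^\epsilon[w] + w^2 - \epsilon a''(x)\,w + \frac{w(x,t) - w(\tilde\theta x, t)}{x} = 0,
\end{equation*}
where $\widetilde{\mathcal{L}}^\epsilon$ has the same principal part as $\mathcal{L}^\epsilon$ but with drift shifted by an extra $-\epsilon a'(x)$ from differentiating the viscosity term once more. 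Suppose the supremum of $w$ on $[0,\infty)\times[0,T]$ were strictly positive. It cannot be attained at $t=0$ (where $F_0'' \leq 0$ by \eqref{A:secondDerivativeBound1}) nor at $x=0$ (where $w = 0$ by the first step), hence it would be attained at an interior point $(x_0, T)$; there $\widetilde{\mathcal{L}}^\epsilon[w](x_0,T) \geq 0$ by the parabolic extremum conditions, the nonlocal Taylor term is $\geq 0$, $-\epsilon a''(x_0)\,w(x_0, T) \geq 0$ by concavity of $a$, and $w(x_0, T)^2 > 0$, contradicting that their sum vanishes. Hence $w \leq 0$ throughout. The main obstacle is precisely the degeneracy of $a$ at $x=0$: it collapses the strict boundary inequality of Lemma \ref{lem:boundaryEst1} into the equality $w(0,t) = 0$, so only the strict interior term $w^2$, combined with the sign gained from the built-in concavity of $a$ in \eqref{e:aDefinition}, forces the contradiction.
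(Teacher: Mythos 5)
Your proposal correctly identifies the central difficulty---that $a$ vanishes at $x=0$, so the equation is genuinely degenerate there and boundary regularity is delicate---but then does not actually resolve it. The step ``because $a(0)=0$, sending $x\to 0^+$ in \eqref{e:viscosity} yields $\frac{1}{2}(u_0-m)(u_0-m+1)=0$'' silently assumes $\lim_{x\to 0^+}\epsilon a(x)\partial_{xx}F^\epsilon_2(x,t)=0$, i.e.\ that $\partial_{xx}F^\epsilon_2$ blows up slower than $1/x$ near the origin. That is not known at this stage; indeed the conclusion $\lim_{x\to 0^+}x\partial_x^2 F^\epsilon_2(x,t)=0$ is precisely the (nontrivial) content of Lemma~\ref{lem:boundaryEst2}, whose proof uses the concavity from the present lemma as an input. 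Taken literally, your argument is therefore circular. The subsequent claim ``matching next-order terms further pins down $\partial_x^2 F^\epsilon_2(0,t)=0$'' is even stronger and is not justified at all: for the degenerate operator $\epsilon a\partial_{xx}$ there is no reason for $\partial_x^2 F^\epsilon_2$ to extend continuously to $\{x=0\}$ --- the a posteriori estimate $-1\le x\partial_x^2 F^\epsilon_2\le 0$ is entirely compatible with $\partial_x^2 F^\epsilon_2\to-\infty$ as $x\to 0^+$. Without a genuine value (or one-sided sign) for $\partial_x^2$ at $x=0$, the maximum-principle argument for $w=\partial_x^2 F^\epsilon_2$ cannot rule out the supremum being approached only along $x\to 0^+$, so the concavity step does not close.

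The paper sidesteps all of this by inserting a second regularization: it considers $(\epsilon a(x)+\delta)\partial_{xx}F$ (equation \eqref{e:delta}), which is uniformly parabolic, so the solution $F^{\epsilon,\delta}_2\in C^2_1([0,\infty)\times(0,\infty))$ and the boundary limits from Lemmas~\ref{lem:gradientbound1}--\ref{l:interiorUpperBoundApprox} can be taken literally (the equation at $x=0$ becomes $\frac12(\partial_x F^{\epsilon,\delta}_2(0,t)-m)(\partial_x F^{\epsilon,\delta}_2(0,t)-m+1)=\delta\,\partial_x^2 F^{\epsilon,\delta}_2(0,t)$, and the left side is $\le 0$ from the gradient bound, giving $\partial_x^2 F^{\epsilon,\delta}_2(0,t)\le 0$). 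One then runs the interior maximum principle, absorbing the extra $\epsilon a''\partial_x^2 F^{\epsilon,\delta}_2$ term using $a''\le 0$, and passes $\delta\to 0^+$ at the very end. If you insist on working directly with $F^\epsilon_2$, you would need to invoke the sharp boundary regularity theory for degenerate operators (the $\mathcal{C}^{2+\beta}_s$ estimates of Daskalopoulos--Hamilton/Koch/Feehan--Pop cited in Lemma~\ref{lem:boundaryEst2}) \emph{before} this lemma, which is both harder and out of order relative to the paper's development; the $\delta$-regularization is the cleaner route.
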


\begin{proof}
  For each $\delta>0$, consider
  \begin{equation} \label{e:delta}
   \begin{cases}
     \partial_t F  + \frac{1}{2}(\partial_x F - m)(\partial_x F - m -1) + \frac{F}{x} - m &= (\epsilon a(x)+\delta) \partial_{xx}F \,, \\
    F(x,0) &= F_0(x) \,, \\
    F(0,t) &= 0 \,. \\
  \end{cases}
\end{equation}
 Let $F^{\epsilon,\delta}_2$ be the unique solution to the above.
 Then, $F^{\epsilon,\delta}_2 \in C^\infty((0,\infty)^2) \cap C^{2}_1([0,\infty)\times (0,\infty))$.

 By repeating the proof of Lemma \ref{lem:gradientbound1}, we obtain that $0 \leq \partial_x F^{\epsilon,\delta}_2 \leq m$.
 In a similar fashion, $\partial_x^2 F^{\epsilon,\delta}_2(0,t) \leq 0$ for all $t \geq 0$ by following the proof of \eqref{ine:boundaryEst1}.
 Finally, we use the maximum principle to conclude that $F^{\epsilon,\delta}_2$ is concave in $x$.
 Indeed, replicating the proof of Lemma \ref{l:interiorUpperBoundApprox}, we find that for some $T>0$, there exists $x_0>0$ such that
 \[
 \max_{[0,\infty) \times [0,T]} \partial_x^2 F^{\epsilon,\delta}_2 = \partial_x^2 F^{\epsilon,\delta}_2 (x_0,T) \,.
 \]
 The maximum principle then gives us that
 \[
 \left(\partial_x^2 F^{\epsilon,\delta}_2 (x_0,T) \right)^2 \leq \epsilon a''(x_0)\partial_x^2 F^{\epsilon,\delta}_2 (x_0,T) \,.
 \]
 Note that $a''(x_0) \leq 0$ as $a$ is chosen to be concave. Therefore, $\partial_x^2 F^{\epsilon,\delta}_2 (x_0,T) \leq 0$.
 Let $\delta \to 0^+$ to get the desired results.
\end{proof}

\begin{lemma}\label{lem:boundaryEst2}
  For each $\epsilon>0$, let $F^\epsilon_2$ be the viscosity solution to equation~\eqref{e:viscosity} with $a$ defined as in~\eqref{e:aDefinition}. 
  Then,  $F^\epsilon_2 \in C^1([0,\infty)^2)$ and
  \begin{equation*}
    \partial_x F^\epsilon_2 (0,t) = m\,.
  \end{equation*}
  In other words, for $t\geq 0$,
  \begin{equation}\label{lowerboundat0}
    \lim_{x\to 0^+} x \partial_x^2 F^\epsilon_2(x,t) = 0 \,.
  \end{equation}
\end{lemma}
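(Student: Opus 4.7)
The plan is to prove $p(t) := \lim_{x\to 0^+}\partial_x F^\epsilon_2(x,t) = m$ for every $t\geq 0$. By Lemma~\ref{lem:F2concave}, $F = F^\epsilon_2$ is concave in $x$ with $0\leq \partial_x F \leq m$; together with $F(0,t)=0$ this makes $F(x,t)/x$ nonincreasing in $x$, so $p(t) = \lim_{x\to 0^+} F(x,t)/x \in [0,m]$ exists. Formally sending $x\to 0^+$ in the PDE (valid since $a(x)=x$ on $[0,1]$ and $\partial_t F(0,t) = 0$) yields
\[
\frac{1}{2}(p(t)-m)(p(t)-m+1) = \epsilon \lim_{x\to 0^+} x\partial_x^2 F(x,t),
\]
so once $p(t) = m$ is established we also get $\lim_{x\to 0^+} x\partial_x^2 F = 0$; the other algebraic root $p(t) = m-1$ is incompatible with $p(t)\geq 0$ since $m<1$.

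To show $p(t)\geq m$, I would construct a classical subsolution of the form $\underline{F}(x,t) = mx - \tilde{C} x^{1+\alpha}$ on the cylinder $[0,x_0]\times[0,\infty)$ with $x_0 \leq 1$ and small $\alpha\in(0,1)$. A direct calculation, using $a(x)=x$ on $[0,1]$, gives
\[
\partial_t \underline{F} + \frac{1}{2}(\underline{F}_x - m)(\underline{F}_x - m - 1) + \frac{\underline{F}}{x} - m - \epsilon x\, \underline{F}_{xx} = \tilde{C} x^\alpha \left[\frac{\alpha-1}{2} + \epsilon\alpha(1+\alpha)\right] + \frac{\tilde{C}^2 (1+\alpha)^2}{2} x^{2\alpha}.
\]
For $\alpha$ small enough the first bracket is, say, $\leq -1/4$, and the higher-order $O(x^{2\alpha})$ term is absorbed by taking $x_0$ small. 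The parabolic-boundary conditions are met by choosing $\tilde{C}\geq m/x_0^\alpha$ (forcing $\underline{F}(x_0,t)\leq 0\leq F(x_0,t)$) and by using \eqref{A:firstDerivativeBound}--\eqref{A:secondDerivativeBound1} with Taylor's theorem to get $F_0(x) \geq mx - \frac{C}{2} x^2$, so that $\tilde{C}\geq \frac{C}{2} x_0^{1-\alpha}$ gives $\underline{F}(\cdot,0)\leq F_0$ on $[0,x_0]$. Both $\tilde{C}$-thresholds are compatible with the subsolution inequality, and the comparison principle for the parabolic Hamilton-Jacobi equation on $[0,x_0]\times[0,\infty)$ then yields $F\geq \underline{F}$. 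Dividing by $x$ and sending $x\to 0^+$ gives $p(t)\geq m$, hence $p(t) = m$.

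For the $C^1$ claim, concavity yields the secant lower bound
\[
\partial_x F(x,t) \geq \frac{F(2x,t) - F(x,t)}{x} \geq m - \tilde{C}(2^{1+\alpha}-1)\, x^\alpha,
\]
so $\partial_x F \to m$ as $x\to 0^+$ uniformly in $t$ on every compact interval; combined with the interior smoothness $F\in C^\infty((0,\infty)^2)$ this gives $F\in C^1([0,\infty)^2)$. I expect the main obstacle to be the subsolution construction: the positive $O(x^{2\alpha})$ correction arising from the quadratic nonlinearity in $\partial_x F$ competes with the negative $O(x^\alpha)$ term, and the lateral-boundary requirement $\tilde{C}\geq m/x_0^\alpha$ forces a careful simultaneous choice of $\alpha$, $\tilde{C}$, and $x_0$ in order to meet all three boundary compatibilities and the subsolution inequality together.
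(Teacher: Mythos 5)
Your barrier construction is a genuine and attractive alternative to the paper's argument for the identity $\partial_x F^\epsilon_2(0,t)=m$. The paper establishes this fact by first invoking the degenerate-parabolic Schauder estimate ($F^\epsilon_2\in\mathcal C^{2+\beta}_s$, via Daskalopoulos--Hamilton, Koch, Feehan--Pop), then integrating the equation over a time interval $[b_1,b_2]$ and applying L'H\^opital to $\partial_x G(x)/\log x$. Your explicit subsolution $\underline F=mx-\tilde Cx^{1+\alpha}$ bypasses that machinery for this particular conclusion and, as a bonus, yields a quantitative rate $m-\partial_x F^\epsilon_2(x,t)\le C'x^\alpha$ uniformly in $t$. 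The computation you display is correct, and the three boundary compatibilities at $\{0\}\times[0,T]$, $\{x_0\}\times[0,T]$, and $[0,x_0]\times\{0\}$ together with the strict subsolution inequality can indeed be met simultaneously when $0<m<\tfrac12$, by first shrinking $\alpha$ so that $\tfrac{\alpha-1}{2}+\epsilon\alpha(1+\alpha)+\tfrac{m(1+\alpha)^2}{2}<0$, then choosing $\tilde C x_0^\alpha=m$ with $x_0\le\min\{1,2m/C\}$. Two caveats worth spelling out: (i) comparison between the smooth strict subsolution $\underline F$ and the viscosity solution $F^\epsilon_2$ of the \emph{degenerate} equation \eqref{e:viscosity} needs a word of justification, which is supplied by $\partial_z H=1/x\ge 0$ via the usual contact-point argument (it is important that you compare against $F^\epsilon_2$ itself rather than against the $\delta$-regularizations $F^{\epsilon,\delta}_2$ of \eqref{e:delta}, since for $\delta>0$ the extra term $\delta\tilde C\alpha(1+\alpha)x^{\alpha-1}$ destroys the subsolution property near $x=0$); (ii) the subsolution requires $F_0(x)\ge mx-\tfrac C2 x^2$, which uses $F_0'(0)=m$ from \eqref{A:firstDerivativeBound}, so the hypotheses of that assumption are genuinely needed here.

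However, there is a real gap in your treatment of the other two conclusions. The claim $F^\epsilon_2\in C^1([0,\infty)^2)$ requires not only that $\partial_x F^\epsilon_2$ extend continuously to $x=0$ (which your secant bound does give, uniformly in $t$ on compacts) but also that $\partial_t F^\epsilon_2$ extend continuously, with limiting value $\partial_t F^\epsilon_2(0,t)=0$. Your proof never addresses $\partial_t F^\epsilon_2$, and your derivation of \eqref{lowerboundat0} -- ``formally sending $x\to 0^+$ in the PDE, valid since \ldots $\partial_t F(0,t)=0$'' -- silently uses $\lim_{x\to 0^+}\partial_t F^\epsilon_2(x,t)=\partial_t F^\epsilon_2(0,t)$, which is precisely the boundary continuity of $\partial_t F^\epsilon_2$ that needs to be proved. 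Put differently: from the equation and your uniform bounds one only gets $\partial_t F^\epsilon_2 - \epsilon x\partial_{xx}F^\epsilon_2 = O(x^\alpha)$; concavity gives $x\partial_{xx}F^\epsilon_2\le 0$ and hence $\partial_t F^\epsilon_2\le O(x^\alpha)$ from above, but no lower bound on $\partial_t F^\epsilon_2$ or upper bound on $-x\partial_{xx}F^\epsilon_2$ follows, because the secant bound controls $\int_0^x(-\partial_{xx}F^\epsilon_2)\,dy$, not $x\partial_{xx}F^\epsilon_2$ pointwise. The paper closes this gap precisely by the Schauder estimate $F^\epsilon_2\in\mathcal C^{2+\beta}_s([0,\infty)\times[0,T])$, which delivers H\"older continuity up to the boundary of both $\partial_t F^\epsilon_2$ and $x\partial_{xx}F^\epsilon_2$, after which $\partial_x F^\epsilon_2(0,t)=m$ and the equation at $x=0$ together force \eqref{lowerboundat0} and $\lim_{x\to 0^+}\partial_t F^\epsilon_2(x,t)=0$. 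So your barrier cleanly replaces the time-integration/L'H\^opital step, but you still need the cited boundary-regularity theory (or a substitute) for the $C^1([0,\infty)^2)$ and $\lim_{x\to0^+}x\partial^2_xF^\epsilon_2=0$ assertions; as written, those parts of your argument are circular.
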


\begin{proof}
 By Lemma \ref{lem:F2concave}, $x \mapsto \partial_x F^\epsilon_2(x,t)$ is decreasing in $(0,\infty)$ and $0 \leq  \partial_x F^\epsilon_2(x,t) \leq m$, and so, $\lim_{x \to 0^+}  \partial_x F^\epsilon_2(x,t)$ exists.
 By the L'Hopital rule,
 \[
  \partial_x F^\epsilon_2(0,t)= \lim_{x \to 0^+} \frac{F^\epsilon_2(x,t) - F^\epsilon_2(0,t)}{x}=\lim_{x \to 0^+}  \partial_x F^\epsilon_2(x,t)\,,
 \]
 which means that $x \mapsto F^\epsilon_2(x,t)$ is $C^1$ on $[0,\infty)$ for each fixed $t \geq 0$.
Besides, by the results of Daskalopoulos and Hamilton~\cite{DaskalopoulosHamilton98}, Koch~\cite{koch98}, Feehan and Pop~\cite{FeehanPop13}, we yield further that, for each $T>0$, $F^\epsilon_2 \in \mathcal{C}^{2+\beta}_s([0,\infty)\times [0,T])$, and
\[
\|F^\epsilon_2\|_{\mathcal{C}^{2+\beta}_s} \leq C \|F_0\|_{\mathcal{C}^{2+\beta}_s}
\]
for some constant $C=C(\epsilon,T)>0$.
Here,
\[
\|f\|_{\mathcal{C}^{2+\beta}_s} \defeq \|f\|_{\mathcal{C}^{\beta}_s}+\|\partial_x f\|_{\mathcal{C}^{\beta}_s}+\|\partial_t f\|_{\mathcal{C}^{\beta}_s}+\|x \partial^2_x f\|_{\mathcal{C}^{\beta}_s},
\]
and
\[
\|f\|_{\mathcal{C}^{\beta}_s} \defeq \|f\|_{L^\infty([0,\infty)\times [0,T])} + \sup_{\substack{(x_1,t_1) \neq (x_2,t_2)\\(x_1,t_1),(x_2,t_2) \in [0,\infty)\times [0,T]}} \frac{|f(x_1,t_1)-f(x_2,t_2)|}{s((x_1,t_1),(x_2,t_2))^\beta}.
\]
The distance $s$ is defined as: For $(x_1,t_1),(x_2,t_2) \in [0,\infty)^2$,
\[
s((x_1,t_1),(x_2,t_2)) \defeq \frac{|x_1-x_2|}{\sqrt{x_1}+\sqrt{x_2}} + \sqrt{|t_1-t_2|}.
\]

Let us show now that in fact $\partial_x F^\epsilon_2 (0,t) = m$ for all $t\geq 0$.
 For any $0<b_1 <b_2$, denote by
 \[
 G(x) \defeq \int_{b_1}^{b_2} F^\epsilon_2(x,t)\,dt\,.
 \]
Integrate \eqref{e:viscosity} with respect to $t \in [b_1, b_2]$ and let $x \to 0^+$ to yield
  \begin{equation*}
    \lim_{x\to 0^+} \epsilon x\partial_x^2 G(x) = \frac{1}{2} \int_{b_1}^{b_2}( \partial_x F^\epsilon_2 (0,t) - m) ( \partial_x F^\epsilon_2 (0,t) - m + 1)\,dt \leq 0 \,.
  \end{equation*}
 Suppose by contradiction that the right hand side above is negative, which is denoted by $-C<0$. 
 Then, 
  \begin{equation*}
    \lim_{x \to 0^+} x \partial_x^2 G (x) = - \frac{C}{\epsilon} < 0 \,.
  \end{equation*}
  Thus, by the L'Hopital rule,
  \begin{align*}
    -\frac{C}{\epsilon} &= \lim_{x \to 0^+} x \partial_x^2 G (x) = \lim_{x \to 0^+} \frac{ \partial_x^2 G (x)}{1/x} = \lim_{x \to 0} \frac{ \partial_x G (x)}{ \log x} \,.
  \end{align*}
  However, note that
  \[
  \abs{ \partial_x G (x) } = \left|\int_{b_1}^{b_2} \partial_x F^\epsilon_2(x,t)\,dt \right| \leq m(b_2-b_1)=C \,,
  \]
  which implies that 
  \begin{equation*}
    \lim_{x \to 0} \frac{ \partial_x G (x)}{\log x} = 0 \,,
  \end{equation*}
  which is a contradiction.
  Thus, we always have  $\lim_{\epsilon \to 0^+} \epsilon x \partial_x^2 G(x)=0$ for any $0<b_1 <b_2$ and, therefore, $\partial_x F^\epsilon_2(0,t)=m$ for all $t\geq 0$.
  This gives us \eqref{lowerboundat0} and also that
  \[
  \lim_{x \to 0^+} \partial_t F^\epsilon_2(x,t) = 0 = \partial_t F^\epsilon_2(0,t)\,.
  \]
  The proof is complete.
\end{proof}

\begin{lemma} \label{l:interiorLowerBoundApprox}
  For each $\epsilon>0$, let $F^\epsilon_2$ be the viscosity solution to equation~\eqref{e:viscosity} with $a$ defined as in~\eqref{e:aDefinition}. 
  Then,  for $\epsilon>0$ sufficiently small, 
  \begin{equation} \label{ine:interiorLowerBoundApprox}
    x\partial_x^2 F^\epsilon_2 \geq -1  \quad \text{ in } (0,\infty)^2\,.
  \end{equation}
\end{lemma}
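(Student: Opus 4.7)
The plan is to extend the maximum-principle strategy of Lemma~\ref{l:interiorUpperBoundApprox} to the quantity $w := x\partial_x^2 F^\epsilon_2$, with the target bound $w \geq -1$. The parabolic boundary is already under control: by Lemma~\ref{lem:boundaryEst2}, $w$ extends continuously to $x=0$ with $w(0,t) = 0$; by Lemma~\ref{lem:F2concave}, $w \leq 0$; and by assumption~\eqref{A:secondDerivativeBound2} together with $m < 1/2 < e$, $w(x,0) = xF_0''(x) \geq -m/e > -1$, so the desired bound holds with a definite margin on the initial and lateral boundaries.

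First I would derive a PDE for $w$ by differentiating the viscous equation \eqref{e:viscosity} twice in $x$ and then multiplying by $x$. Setting $A := \partial_x F^\epsilon_2 - m - 1/2$, one obtains after simplification
\[
\partial_t w + A\,\partial_x w + \frac{w^2}{x} + \frac{w}{x} - (A-1)\frac{w}{x} - \frac{2(x\partial_x F^\epsilon_2 - F^\epsilon_2)}{x^2} \;=\; \epsilon\,[\text{linear terms in } w, \partial_x w, \partial_{xx}w],
\]
where the $\epsilon$-contribution involves $a(x)\partial_{xx}w$, $a'(x)\partial_x w$, $a''(x)w$, and zeroth-order corrections; in the region $x \in (0,1]$ where $a(x) = x$, several of these cancel and the $\epsilon$-piece reduces to $\epsilon x \partial_{xx}w$. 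Two alternative representations of the nonlocal term will be used: the Taylor expansion $F^\epsilon_2(0,t) = F^\epsilon_2(x,t) - x\partial_x F^\epsilon_2(x,t) + \frac{x^2}{2}\partial_{xx}F^\epsilon_2(\theta x,t)$ for some $\theta = \theta(x,t) \in (0,1)$ yields $\frac{2(x\partial_x F^\epsilon_2 - F^\epsilon_2)}{x^2} = \frac{w(\theta x,t)}{\theta x}$, while direct integration yields the integral identity $\frac{2(x\partial_x F^\epsilon_2 - F^\epsilon_2)}{x^2} = \frac{2}{x^2}\int_0^x w(y,t)\,dy$.

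The core of the proof is a first-touch argument. Assume for contradiction that $T > 0$ is the first time at which $w$ attains the value $-1$, reached at some $x_0 > 0$, so $w(y,T) \geq -1$ for all $y$, and at $(x_0,T)$ one has $\partial_x w = 0$, $\partial_{xx}w \geq 0$, $\partial_t w \leq 0$. Lemma~\ref{lem:F2concave} gives $0 \leq \partial_x F^\epsilon_2 \leq m$, so $A \leq -1/2$ with a strict slack of $1/2 - m > 0$ available from $m < 1/2$. Plugging the critical-point conditions into the PDE collapses it (modulo $\epsilon$-terms whose favorable sign follows from the concavity of $a$ and $a' \geq 0$) to
\[
0 \;\geq\; \partial_t w(x_0,T) \;=\; -\frac{A}{x_0} \,+\, \frac{2(x_0\partial_x F^\epsilon_2 - F^\epsilon_2)}{x_0^2} \,+\, O(\epsilon),
\]
so it suffices to produce $-A/x_0 + 2(x_0\partial_x F^\epsilon_2 - F^\epsilon_2)/x_0^2 > 0$.

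The main obstacle is that the naive pointwise bound $w(\cdot,T) \geq -1$ only gives the integral estimate $\left|2(x_0\partial_x F^\epsilon_2 - F^\epsilon_2)/x_0^2\right| \leq 2/x_0$, which together with the rough $-A/x_0 \geq 1/(2x_0)$ leaves a residual of order $-3/(2x_0)$ and does not close the argument by itself. To get the desired strict inequality, the plan is to refine the nonlocal estimate by combining the integral identity with the two facts that $w(0,T) = 0$ and that $w > -1$ strictly on $[0,x_0)$ at the first touch time, upgrading the Taylor representation at a scale $\theta x_0$ where $w$ is strictly greater than $-1$; the boundary Hölder regularity of $\partial_{xx}F^\epsilon_2$ in the parabolic metric $s$ from the Daskalopoulos–Hamilton–Koch–Feehan–Pop theory invoked in Lemma~\ref{lem:boundaryEst2} provides the quantitative rate near $x=0$ needed to make this refinement effective. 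The smallness $m < 1/2$ then contributes the positive slack $1/2 - m$ that closes the estimate, and the $O(\epsilon)$ correction is absorbed by taking $\epsilon$ sufficiently small. Finally, as in the proof of Lemma~\ref{lem:F2concave}, the degeneracy of $a$ at $x=0$ is handled by adding an auxiliary $\delta\,\partial_{xx}F$ viscosity, carrying out the maximum-principle argument on the uniformly parabolic approximation, and sending $\delta \to 0^+$ at the end.
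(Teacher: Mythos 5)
Your overall strategy---a maximum-principle argument on $w = x\partial_x^2 F^\epsilon_2$, using Lemmas \ref{lem:F2concave} and \ref{lem:boundaryEst2} to control the parabolic boundary---is aligned with the paper. However, your quantitative argument has a genuine gap, and your proposed fix points in the wrong direction.

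The problem is in your estimate of the nonlocal term $B := 2(F^\epsilon_2 - x_0\partial_x F^\epsilon_2)/x_0$. You bound it using only $w \geq -1$ (Taylor's theorem), which gives $|B| \lesssim 1/\theta$ with $\theta \in (0,1)$ the Taylor point---not the uniform $|B|\leq 2$ you claim, and in any case far too crude. The correct estimate is structural and does not use $w\geq -1$ at all: from the a priori bound $0 \leq F^\epsilon_2 \leq mx$ (inequality \eqref{ine:sublinear}) together with $0\leq \partial_x F^\epsilon_2 \leq m$ (Lemma \ref{lem:F2concave}) and concavity, one gets $0 \leq B \leq 2(m - \partial_x F^\epsilon_2(x_0,T)) =: 2\kappa$ with $\kappa \in [0,m]$. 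This bound, inserted into the quadratic inequality one derives at the critical point (the paper's $\alpha^2 + A\alpha + B \geq 0$ with $A = m+\tfrac{3}{2} - 2\epsilon - \partial_x F^\epsilon_2$), is exactly what closes the argument when $m < \tfrac12$ and $\epsilon$ is small: the discriminant satisfies $A^2 - 4B \geq (\tfrac12 - \kappa)^2 + 4(\tfrac12-m) - 8\epsilon > 0$. Your ``residual of order $-3/(2x_0)$'' is an artifact of discarding the information $F^\epsilon_2 \leq mx$, and the boundary Hölder regularity you then invoke to rescue the estimate is a red herring---it will not give the required positivity, because the strict inequality $w(\theta x_0,T) > -1$ at the first-touch time carries no quantitative rate.

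Separately, the identity you write for $\partial_t w(x_0,T)$ at the critical point has the wrong coefficients (check: with $w=-1$ and $\partial_x w = 0$, the drift term $(\partial_x F - m -\tfrac12)(-w/x_0)$ contributes $+(\partial_x F -m-\tfrac12)/x_0$, and the $w^2/x_0 + w/x_0$ terms cancel; your displayed formula does not reproduce this). Finally, note that the paper does not run a first-touch argument at $-1$: it shows that the minimum value $\alpha(T)$ of $G^\epsilon$ must lie outside the open interval $(-\tfrac34 - \tfrac{m}{2},\, -\tfrac12 - m)$, then combines this gap with the continuity of $t\mapsto\alpha(t)$ (and this continuity is where the parabolic Hölder theory from Lemma \ref{lem:boundaryEst2} is actually used) and the initial bound $\alpha(0)\geq -m/e \geq -\tfrac14$ to conclude $\alpha(t) \geq -\tfrac12 - m > -1$. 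This gap-and-continuity structure is more robust than a pure first-touch argument and gives the stronger conclusion $x\partial_x^2 F^\epsilon_2 \geq -\tfrac12 - m$.
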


\begin{proof}
We break the proof into a few steps as following.

\smallskip

  \emph{ Step 1.} Again, differentiating~\eqref{e:viscosity} twice in $x$, we get
  \begin{equation} \label{e:twiceDiff}
    \begin{aligned}
      \paren[\Big]{ \partial_t\partial_x^2 F^\epsilon_2 + \left[\partial_x F^\epsilon_2 - (m+\frac{1}{2})\right] \partial_x^3 F^\epsilon_2  } + (\partial_x^2 F^\epsilon_2)^2 + \frac{ \partial_x^2 F^\epsilon_2}{x}
    - \frac{ 2 \partial_x F^\epsilon_2}{x^2} + \frac{ 2 F^\epsilon_2}{x^3} \\
      = \epsilon \paren[\Big]{ a'' \partial_x^2 F^\epsilon_2 + 2 a' \partial_x^3 F^\epsilon_2 + a \partial_x^4 F^\epsilon_2  } \,.
    \end{aligned}
  \end{equation}

  Let
  \begin{equation*}
    G^\epsilon \defeq x \partial_x^2 F^\epsilon_2 \,.
  \end{equation*}
  By concavity of $F^\epsilon_2$ in $x$ (Lemma \ref{lem:F2concave}) and the proof of Lemma \ref{lem:boundaryEst2},
  $G^\epsilon \in \mathcal{C}^{\beta}_s([0,\infty)\times [0,T])$ for each $T>0$, $G^\epsilon\leq 0$, and $G^\epsilon(0,t)=0$ for all $t \geq 0$.
  Besides,  by the condition~\eqref{A:secondDerivativeBound2}, we have that
 \[
  -\frac{1}{4} \leq -\frac{m}{e} \leq x F_0''(x) = G^\epsilon(x,0)  \leq 0 \quad \text{ for all }  x \geq 0 \,.
 \]
  For $t\geq 0$, denote by
  \[
  \alpha(t)  \defeq \inf_{[0,\infty) \times [0,t]} G^\epsilon.
  \]
  Surely, $\alpha:[0,\infty) \to (-\infty,0]$ is decreasing and bounded, and $\alpha(0) \in [-\frac{1}{4},0]$.
  We now show that $\alpha$ is continuous.
  Fix $T>0$.
  For $s,t \in [0,T]$, we use the property $G^\epsilon \in \mathcal{C}^{\beta}_s([0,\infty)\times [0,T])$ to see that, for each $x >0$,
  \[
  |G^\epsilon(x,s) - G^\epsilon(x,t)| \leq C |s-t|^{\beta/2},
  \]
  for some $C=C(\epsilon,T)>0$.
  Therefore, for $s,t \in [0,T]$, 
    \begin{equation}\label{eq: alpha cont}
   |\alpha(s) - \alpha(t)| \leq C |s-t|^{\beta/2}.
  \end{equation}
 Thus, $\alpha$ is locally H\"older continuous, and hence, is continuous on $[0,\infty)$.
 It is of our goal now to show that $\alpha(t) \geq -1$ for all $t \in [0,\infty)$.

  \smallskip
  
    \emph{ Step 2.} 
    Fix $T>0$ such that $\alpha(T) < \alpha(0)$.
  Suppose that there exists $(x_0,t_0) \in (0,\infty) \times (0,T]$ such that  
  \begin{equation*}
    \min_{[0,\infty) \times [0,T]} G^\epsilon(x,t) = G^\epsilon(x_0, t_0) = \alpha(T) < 0 
  \end{equation*}
  (see Remark~\ref{remark:maximumprinciple}).
  We then have that, at $(x_0, t_0)$,
  \[
  0\geq \partial_t G^\epsilon = x_0 \partial_t \partial_x^2 F^\epsilon_2\,,
  \]
  and
  \begin{equation} \label{first derivative 0}
    0 = \partial_x G^\epsilon = x_0 \partial_x^3 F^\epsilon_2 + \partial_x^2 F^\epsilon_2  \iff \partial_x^2 F^\epsilon_2 = - x_0 \partial_x^3 F^\epsilon_2\, ,
  \end{equation}
  and, therefore, 
  \begin{equation} \label{ine:critical}
    0 \leq \partial_x^2 G^\epsilon \iff x_0^2 \partial_x^4 F^\epsilon_2 \geq - 2x_0 \partial_x^3 F^\epsilon_2 = 2 \partial_x^2 F^\epsilon_2 \,.
  \end{equation}

  Multiplying equation~\eqref{e:twiceDiff} by $x_0^2$ and use estimate~\eqref{ine:critical} to evaluate at $(x_0,t_0)$, we obtain
  \begin{align*}
  &\alpha(T)^2 + \alpha(T) \left( m + \frac{3}{2} - \partial_x F^\epsilon_2\right) + \frac{2 ( F^\epsilon_2 - x_0 \partial_x F^\epsilon_2)}{x_0}\\
    \geq \, & \epsilon \alpha(T) \paren[\Big]{ \frac{2 a(x_0)}{x_0} - 2a'(x_0) + a''(x_0) x_0   } \geq 2 \epsilon \alpha(T) \,.
  \end{align*}
  The last inequality follows since $\alpha(T) \leq 0$ and,   by the way we choose $a$,
  \[
   \frac{2 a(x_0)}{x_0} - 2a'(x_0) + a''(x_0) x_0 \leq    \frac{2 a(x_0)}{x_0}  \leq  2  \,.
     \]
  Therefore, rearranging terms,  we have 
  \begin{equation*}
    \alpha(T)^2 + A \alpha(T)
    + B \geq 0 \,,
  \end{equation*}
  where
 \begin{equation*}
   A =  m + \frac{3}{2} - 2 \epsilon - \partial_x F^\epsilon_2(x_0,t_0) \,,  
 \end{equation*}
 and
 \begin{equation*}
  B = \frac{ 2( F^\epsilon_2(x_0,t_0) - x_0 \partial_x F^\epsilon_2(x_0,t_0))}{x_0} \,.
 \end{equation*}

 We have that, since $0 \leq \partial_x F^\epsilon_2 \leq m$ and $F^\epsilon_2$ is concave in $x$, for $\kappa= m-\partial_x F^\epsilon_2(x_0,t_0)$,
 \[
 0 \leq \kappa \leq m \quad \text{ and } \quad 0 \leq B \leq 2\kappa \,.
 \]
 Therefore,
 \begin{equation*}
   \frac{3}{2} +\kappa - 2\epsilon = A \leq m + \frac{3}{2} - 2\epsilon <2 \,,
 \end{equation*}
 and 
 \begin{equation*}
   0 \leq B \leq 2\kappa \leq 2m \,.
 \end{equation*}
 As $0<m<\frac{1}{2}$, obviously $0<\kappa \leq m<\frac{1}{2}$. For $\epsilon>0$ sufficiently small, 
 \begin{align*}
 A^2-4B &\geq  \left(\frac{3}{2}+\kappa - 2\epsilon\right)^2 - 8 \kappa \geq \frac{9}{4}+\kappa^2 - 5\kappa-  8\epsilon\\
 &=\left(\frac{1}{2}-\kappa\right)\left(\frac{9}{2}-\kappa\right) - 8 \epsilon\\
 & \geq \left(\frac{1}{2}-\kappa\right)^2 + 4\left(\frac{1}{2}-m\right)- 8 \epsilon >\left(\frac{1}{2}-\kappa\right)^2>0 \,.
 \end{align*}
 From the quadratic formula and the above estimates, we find that either 
 \begin{equation*}
   \alpha(T) \leq \frac{ - A - \sqrt{ A^2-4B }}{2}\,,
 \end{equation*}
 or
 \begin{equation*}
   \alpha(T) \geq \frac{ - A + \sqrt{ A^2-4B }}{2}  \,.
 \end{equation*}
It is worth noting that, for $\epsilon>0$ sufficiently small, 
\[
\frac{ - A - \sqrt{ A^2-4B }}{2} \leq \frac{ - A - \left(\frac{1}{2}-\kappa\right)}{2} = -1 +\epsilon \leq -\frac{3}{4} - \frac{m}{2}\,,
\]
and
\[
\frac{ - A + \sqrt{ A^2-4B }}{2} \geq \frac{ - A + \left(\frac{1}{2}-\kappa\right)}{2} = -\frac{1}{2} - \kappa +\epsilon \geq-\frac{1}{2}-m\,.
\]
We then deduce that, for each $T \geq 0$, either
\begin{equation}\label{alpha-case 1}
   \alpha(T) \leq-\frac{3}{4} - \frac{m}{2}\,,
 \end{equation}
 or
 \begin{equation}\label{alpha-case 2}
   \alpha(T) \geq -\frac{1}{2}-m\,.
 \end{equation}
Surely, $-\frac{3}{4} - \frac{m}{2} < -\frac{1}{2}-m$ and there is  a gap of size $\frac{1-2m}{4}$ between these two numbers.

\medskip

 \emph{ Step 3.} 
 We show that, for small enough $\epsilon>0$, only \eqref{alpha-case 2} holds for all $T \geq 0$.
 Assume by contradiction that this  is not the case, then there exists $T>0$ such that \eqref{alpha-case 1} holds, that is,
 \begin{equation*}
   \alpha(T) \leq-\frac{3}{4} - \frac{m}{2}< -\frac{1}{2}-m < \alpha(0)\,,
 \end{equation*}
 By the continuity of $\alpha$, there exists $T^\epsilon \in (0,T)$ so that
 \begin{equation*}
-\frac{3}{4} - \frac{m}{2}<\alpha(T^\epsilon) = \min_{[0,T^\epsilon]} \alpha <  -\frac{1}{2}-m \,,
 \end{equation*}
 which is a contradiction with the conclusion of Step 2 above.

\smallskip

 %We note that the estimates for $\alpha, A$ and $B$ are independent of the size of $T>0$. 
 Thus, for small enough $\epsilon>0$, 
 \begin{equation*}
   x \partial_x^2 F^\epsilon_2(x,t) \geq  -\frac{1}{2} - m > -1
 \end{equation*}
 for every $(x,t) \in (0,\infty)^2$, as desired.
\end{proof}

\begin{remark} \label{remark:maximumprinciple}
In the use of the maximum principle in the above proof, to keep the presentation clean, we assume that minimum points of $G^\epsilon$, which is continuous and bounded, exist on $[0,\infty) \times [0,T]$ for $T>0$.
To justify this point rigorously, one can consider minimum of $G^\epsilon(x,t) + \delta x$, for $\delta>0$, and let $\delta \to 0^+$.
Let us supply the details here.

\smallskip

Pick $T>0$ such that
\[
\alpha(T) = \min_{[0,T]}\alpha < \alpha(0).
\]
For each $k \in \N$ sufficiently large, we choose $\delta_k \in (0,\frac{1}{k})$ sufficiently small such that
\[
\alpha(T) \leq \min_{[0,\infty) \times [0,T]} \left( G^\epsilon(x,t) +\delta_k x \right)  = G^\epsilon(x_k,t_k) +\delta_k x_k \leq \alpha(T) +\frac{1}{k}<\alpha(0),
\]
for some $(x_k,t_k) \in (0,\infty)\times (0,T]$.
In particular, $\delta_k x_k \leq \frac{1}{k}$.
Let 
\[
\alpha_k  = G^\epsilon(x_k,t_k)  \in \left(\alpha(T),\alpha(T)+\frac{1}{k}\right).
\]
We use the maximum principle at $(x_k,t_k)$ and perform careful computations to deduce that
  \begin{align*}
  &\alpha_k^2 + \alpha_k \left( m + \frac{3}{2} - \partial_x F^\epsilon_2\right) + \frac{2 ( F^\epsilon_2 - x_k \partial_x F^\epsilon_2)}{x_k} + \delta_k x_k \left( m + \frac{1}{2} +2\epsilon a'(x_k)- \partial_x F^\epsilon_2\right) \\
&   \geq \epsilon \alpha_k \paren[\Big]{ \frac{2 a(x_k)}{x_k} - 2a'(x_k) + a''(x_k) x_k   } \geq 2 \epsilon \alpha_k \,.
  \end{align*}
  Let $k \to \infty$ and argue in a similar way as in Step 2 of the above proof to yield that either
\[
\alpha(T) \leq -\frac{3}{4} - \frac{m}{2},
\]
or
\[
\alpha(T) \geq-\frac{1}{2}-m,
\]
from which the proof follows.
As this is of course tedious and distracting, we intentionally avoid putting it in the above already technical proof.
\end{remark}

\smallskip

We are now ready to prove one of our main regularity results that $F \in C^{1,1}((0,\infty)^2) \cap C^1([0,\infty)\times(0,\infty))$ when $0<m<\frac{1}{2}$.

\begin{proof}[{\bf Proof of Theorem \ref{thm:regularity-m-less-than-quarter}}]
   From Lemma~\ref{lem:F2concave}, Lemma~\ref{lem:boundaryEst2} and Lemma~\ref{l:interiorLowerBoundApprox}, we have that
   $\abs{\partial_x F^\epsilon_2} \leq m$, $\abs{x\partial_x^2 F^\epsilon_2} \leq 1$ and $\abs{\partial_t F^\epsilon_2} \leq C$.
   Thus, by the Arzel\`a-Ascoli theorem, there exists  $F$ in $C([0,\infty)^2)$ and a subsequence $\set{ \epsilon_i} \to 0$ so that, locally uniformly
   \begin{gather*}
       \lim_{i\to \infty} F^{\epsilon_i}_2  = F  \,.
   \end{gather*}
   By stability of viscosity solutions, $F$ solves equation~\eqref{e:main}.
   
   Now, fix $x_0 >0$. For $x > x_0$, by Lemmas~\ref{lem:F2concave} and~\ref{l:interiorLowerBoundApprox},
   \begin{equation*}
       -\frac{1}{x_0} \leq \partial_x^2 F^\epsilon_2 (x) \leq 0 \,.
   \end{equation*}
   Letting $x_1, x_2 > x_0$, we have
   \begin{equation} \label{ine:differenceQuotient}
       \abs[\Big]{ \frac{\partial_x F^\epsilon_2( x_1,t ) - \partial_x F^\epsilon_2(x_2, t)}{ x_1 - x_2}    } 
       = \abs[\Big]{ \frac{ \int_{x_2}^{x_1} \partial_x^2 F^\epsilon_2 (x,t) \, dx}{x_1 - x_2 }   } \leq \frac{1}{x_0} \,.
   \end{equation}
   Thus, there exist constants $C>0$ and $z_0 >0$, such that for $x > x_0$ and $0 <z < z_0$, we can uniformly bound the double difference quotient
   \begin{equation*}
       \abs[\Big]{ \frac{ F^\epsilon_2 ( x+ 2z, t) - 2 F^\epsilon_2 ( x+ z, t) + F^\epsilon_2(x, t)}{z^2}   } \leq \frac{ C}{x_0} \,.
   \end{equation*}
   Letting $\epsilon$ to $0$, we get
   \begin{equation*}
        \abs[\Big]{ \frac{ F ( x+ 2z, t) - 2 F ( x+ z, t) + F(x, t)}{z^2}   } \leq \frac{ C}{x_0} \,.
   \end{equation*}
   This implies $F$ is $C^{1,1}$ in $x$ on $[x_0,\infty) \times (0,\infty)$ for all $x_0 >0$, which yields that $F$ is locally $C^{1,1}$ in $x$ in $(0,\infty)^2$. 
   It is clear then that $F$ is concave and $F$ inherits estimate \eqref{ine:interiorLowerBoundApprox} from $F_2^\epsilon$, that is,
   \[
   -1 \leq x \partial_x^2 F(x,t) \leq 0 \quad \text{ for all } (x,t) \in (0,\infty)^2.
   \]
  
  On the other hand, differentiating equation \eqref{e:main} in $x$, we have
  \begin{equation*}
    \partial_{t} U +  \partial_{x} U \left( U - m -\frac{1}{2}\right) + \frac{U}{x} - \frac{F}{x^2}  = 0 
  \end{equation*}
  in the viscosity sense, where $U= \partial_x F$. 

  Now, letting $x > x_0$,
   by the obtained estimates on $F$, 
  \begin{equation*}
    0\leq U(x,t) \leq m, \quad   0 \leq \frac{F(x,t)}{x} \leq m \quad 
    \text{ and } \quad -\frac{1}{x_0}\leq \partial_x U(x,t) \leq 0  \,. 
  \end{equation*}
  Therefore,  
  there exists $C=C(x_0)$ such that for $x > x_0$,
  \begin{equation*}
    |\partial_t U(x,t) | = |\partial^2_{tx} F(x,t)| \leq C 
  \end{equation*}
  in the viscosity sense. 
  In a similar way, differentiate the equation with respect to $t$ to deduce that for $x > x_0$ and $t>0$, 
  there exists $C=C(x_0)$ such that
   \begin{equation*}
     |\partial^2_{t} F (x,t)| \leq C \,.
  \end{equation*}
   Therefore, $F \in C^{1,1}((0,\infty)^2)$, and $F$ is concave in $x$. A similar argument (but easier) as that in the proof of Lemma~\ref{lem:boundaryEst2} shows that 
   $F \in C^1([0,\infty)\times(0,\infty))$.
\end{proof}

\subsubsection{Existence of solutions to equation~\eqref{eq:CF} for $0<m_1(0)<\frac{1}{2}$} \label{subsec: F Bernstein}

We now prove the existence of mass-conserving weak solutions in the measure sense to equation~\eqref{eq:CF} when $0<m=m_1(0) <\frac{1}{2}$. Therefore, in this subsection, we will always assume $F_0$ is the Bernstein transform of $c_0=c(\cdot,0)$, where $c_0$ has $m_1(0)=m \in (0,\frac{1}{2})$ and also bounded second moment, that is,
\[
m_2(0)=\int_0^\infty s^2 c(s,0)\,ds \leq C\,.
\]

Our goal is to show, via a combination of the maximum principle and localizations around the characteristics (see Evans~\cite[Chapter 3]{Evans10}), that $F$ is a Bernstein function (see Appendix~\ref{A:bernstein}) and, therefore, has a representation as a Bernstein transform of a measure.

By Theorem \ref{thm:regularity-m-less-than-quarter}, we already have that $F \in C^{1,1}((0,\infty)^2) \cap C^1([0,\infty)\times(0,\infty))$. 
Let us now use this result to yield further that $F \in C^\infty((0,\infty)^2) \cap C^1([0,\infty)^2)$.
\begin{proposition} \label{prop: F smooth}
  Assume all the assumptions in Theorem \ref{thm: existence of solution to C-F}.
  Then $F \in C^\infty((0,\infty)^2) \cap C^1([0,\infty)^2)$.
\end{proposition}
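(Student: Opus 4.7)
The plan is to use the method of characteristics to establish $C^\infty$ regularity on the interior, leveraging the $C^{1,1}$ regularity from Theorem~\ref{thm:regularity-m-less-than-quarter}, and then to combine existing boundary regularity with the PDE to extend $C^1$ regularity up to the corner.

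Setting $U = \partial_x F$ and computing along the projected characteristics yields the closed ODE system
\begin{align*}
\dot X &= U - m - \tfrac{1}{2}, \\
\dot U &= \frac{F}{X^2} - \frac{U}{X}, \\
\dot F &= \tfrac{1}{2}U^2 + m - \tfrac{m(m+1)}{2} - \frac{F}{X},
\end{align*}
with initial data $X(0) = x_0$, $U(0) = F_0'(x_0)$, $F(0) = F_0(x_0)$. Since $F_0$ is the Bernstein transform of a measure on $(0,\infty)$ with finite moments, it is real-analytic on $(0,\infty)$. Since the right-hand side of the system is $C^\infty$ on $(0,\infty) \times \R^2$, standard ODE regularity gives a smooth flow $(X, U, F)(t, x_0)$ on the maximal existence interval where $X > 0$. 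A direct computation yields
\[
\partial_{x_0} X(t, x_0) = \exp\!\left(\int_0^t \partial_{xx} F(X(s, x_0), s) \, ds\right),
\]
which is strictly positive on compact subsets of $(0, \infty) \times [0, T]$ thanks to the bound $-1/x \le \partial_{xx} F \le 0$ from Theorem~\ref{thm:regularity-m-less-than-quarter}.

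The a priori bounds $0 \le U \le m$ show that, along any forward characteristic, $\dot X \in [-m-\tfrac{1}{2}, -\tfrac{1}{2}]$, so $X$ decreases strictly. For any fixed $(x^*, t^*) \in (0,\infty)^2$, the intermediate value theorem applied to the continuous increasing map $x_0 \mapsto X(t^*, x_0)$ produces a unique $x_0 > x^*$ with $X(t^*, x_0) = x^*$, and the trajectory stays in the compact set $[x^*, x^* + (m + \tfrac{1}{2}) t^*] \times [0, t^*] \subset (0,\infty) \times [0,t^*]$. The inverse function theorem, applied using positivity of $\partial_{x_0} X$, then gives a local $C^\infty$ diffeomorphism $(t, x_0) \leftrightarrow (t, x)$ near the trajectory. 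Reconstructing $F$ from the ODE flow produces a classical smooth solution of~\eqref{e:main}, which by Lemma~\ref{lem:CP} coincides with our viscosity solution; therefore $F \in C^\infty((0,\infty)^2)$.

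For $C^1$ regularity up to the boundary, Theorem~\ref{thm:regularity-m-less-than-quarter} already supplies $F \in C^1([0,\infty)\times(0,\infty))$, so it remains to control the behavior at $t = 0$ including the corner $x = t = 0$. Since $F(\cdot, 0) = F_0 \in C^2([0,\infty))$ with $F_0'(0) = m$, and since $\partial_x F(0, t) = m$ for all $t \ge 0$ (obtained via the limit $\epsilon \to 0^+$ of Lemma~\ref{lem:boundaryEst2}), the PDE expresses $\partial_t F = m - F/x - \tfrac{1}{2}(\partial_x F - m)(\partial_x F - m - 1)$ as a continuous function on $[0,\infty)^2$ (noting $F/x \to \partial_x F$ as $x \to 0^+$). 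The $\mathcal{C}^{2+\beta}_s$ bounds on $F_2^\epsilon$ together with stability under $\epsilon \to 0^+$ then deliver $F \in C^1([0,\infty)^2)$. The main obstacle throughout is maintaining positivity of the Jacobian $\partial_{x_0} X$ along characteristics, which critically relies on the a priori bound $x \partial_{xx} F \ge -1$ established in the regularity theorem.
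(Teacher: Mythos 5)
Your interior argument tracks the paper's own approach closely: both use the method of characteristics, exploit the closed Hamiltonian ODE system whose flow is smooth in the initial data, and invert the map $x_0 \mapsto X(t,x_0)$ via positivity of the Jacobian, which both deduce from the lower bound on $\partial_x^2 F$ coming from Theorem~\ref{thm:regularity-m-less-than-quarter}. Your exponential formula $\partial_{x_0}X = \exp\bigl(\int_0^t \partial_{xx}F\,ds\bigr)$ is a compact way of expressing the same Gr\"onwall-type control the paper derives as a differential inequality; the underlying identity $\partial_{x_0}U = \partial_{xx}F\cdot\partial_{x_0}X$ and its integrability rest on the same $C^{1,1}$ a priori estimate, so these are cosmetically rather than substantively different. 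The comment that this breaks down without the $x\partial_{xx}F\ge -1$ bound correctly identifies the load-bearing estimate.

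The boundary step, however, has a real gap. You invoke the $\mathcal{C}^{2+\beta}_s$ estimates for $F_2^\epsilon$ from Lemma~\ref{lem:boundaryEst2} ``together with stability under $\epsilon\to 0^+$'' to conclude $F\in C^1([0,\infty)^2)$, but those estimates carry a constant $C=C(\epsilon,T)$ that is not uniform in $\epsilon$, so they do not survive the limit and cannot deliver $C^1$ regularity at the corner. Likewise, the claim that $\partial_x F(0,t)=m$ for the limit $F$ follows ``via the limit $\epsilon\to 0^+$'' of $\partial_x F_2^\epsilon(0,t)=m$ is not justified by the locally uniform convergence $F_2^\epsilon\to F$ alone, since a boundary derivative need not pass to the limit without uniform control near $x=0$. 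The paper closes this gap with a quite different device: it observes that $\dot P \ge 0$ along characteristics (a consequence of concavity), so that choosing $r>0$ with $F_0'\in[m-\epsilon,m]$ on $[0,r]$ forces $\partial_x F\in[m-\epsilon,m]$ on the entire region swept by characteristics issuing from $[0,r]$, and this region fills a neighborhood of $(0,0)$; continuity of $\partial_x F$, and hence of $\partial_t F$ via the equation, at the corner follows. You should replace the $\mathcal{C}^{2+\beta}_s$/stability appeal with an argument of this flavor.
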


\begin{proof}
  We proceed by using characteristics and earlier results.
  Denote by $X(x,t)$ the characteristic starting from $x$, that is, $X(x,0)=x$.
  Set $P(x,t)=\partial_x F(X(x,t),t))$, and $Z(t)=F(X(x,t),t)$ for all $t \geq 0$.
  When there is no confusion, we just write $X(t),P(t),Z(t)$ instead of $X(x,t), P(x,t), Z(x,t)$, respectively.
  Then, $X(0)=x$, $P(0)=\partial_x F_0(x)$, $Z(0)=F_0(x)$.
  We have the following Hamiltonian system
  \[
  \begin{cases}
  \dot X = \partial_p H(P(t),Z(t),X(t)) = P(t) - \left(m+\frac{1}{2}\right)\,,\\
  \dot P = -\partial_x H - (\partial_z H)P = \frac{Z(t)}{X(t)^2}- \frac{P(t)}{X(t)}\,,\\
  \dot Z = P\cdot \partial_pH - H = \frac{P(t)^2}{2} - \frac{Z(t)}{X(t)} + \frac{m(1-m)}{2}\,.
  \end{cases}
  \]
  Note first that $F \in C^{1,1}((0,\infty)^2) \cap C^1([0,\infty)\times(0,\infty))$, and also $0 \leq \partial_x F \leq m$ thanks to Theorem \ref{thm:regularity-m-less-than-quarter}.
    Therefore,
  \begin{equation} \label{eq:X dot}
        -1 \leq - \left(m+\frac{1}{2}\right) \leq \dot X \leq -\frac{1}{2}\,.
  \end{equation}
  Besides, the concavity of $F$ in $x$ yields further that
  \[
   \dot P =  \frac{Z(t)}{X(t)^2}- \frac{P(t)}{X(t)} =\frac{1}{X(t)} \left( \frac{F(X(t),t)}{X(t)} - \partial_x F(X(t),t) \right) \geq 0\,.
  \]
  
  Let us now show that $\{X(x,\cdot)\}_{x \in (0,\infty)}$ are well-ordered in $(0,\infty)^2$, and none of these two intersect. 
  Assume otherwise that $X(x,t)=X(y,t) > 0$ for some $x \neq y$ and $t>0$.
  As $F \in C^{1,1}((0,\infty)^2) \cap C^1([0,\infty)\times(0,\infty))$, $\partial_x F(X(x,t),t)$ is uniquely defined, and therefore,
  \[
  P(x,t)=P(y,t)= \partial_x F(X(x,t),t) \quad \text{ and } \quad Z(x,t)=Z(y,t) = F(X(x,t),t)\,.
  \]
  
  \begin{figure}[ht]
    \includegraphics[width=8cm]{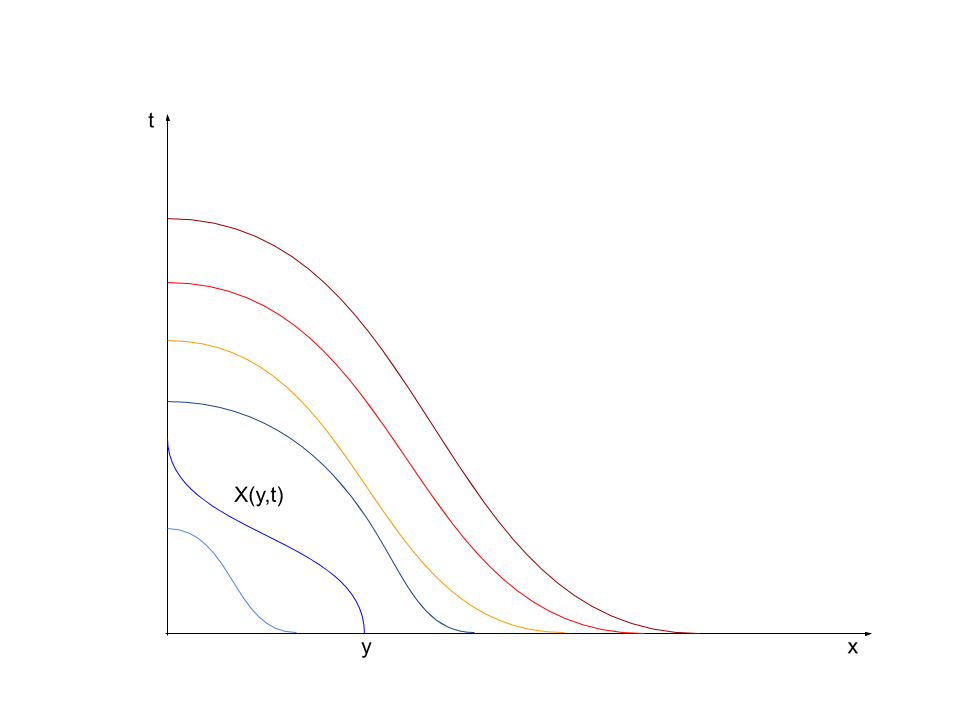}
    \centering
  \caption{Characteristics}
    \end{figure}
    
  Hence, $(X,P,Z)(x,t)=(X,P,Z)(y,t)$, and this contradicts the uniqueness of solutions to the Hamiltonian system on $[0,t]$ as we reverse the time.
  
  Next, for each $t>0$, let $l(t)>0$ be such that $X(l(t),t)=0$. 
  This is possible because of \eqref{eq:X dot}.
  As $F_0$ is smooth, $X,P,Z$ are smooth in $x$.
  Thanks to our Hamiltonian system and the well-ordered of $\{X(x,\cdot)\}_{x \in (0,\infty)}$, the map $x \mapsto X(x,t)$ is a smooth bijection from $(l(t),\infty)$ to $(0,\infty)$.
  Let $X^{-1}(\cdot,t):(0,\infty) \to (l(t),\infty)$ be the inverse of $X(\cdot,t)$.
  
  \medskip 
  
  Let us show further that $X(\cdot,t)$ is a smooth diffeomorphism.
  It is enough to show that $X(\cdot,t):(l(t)+n^{-1},n) \to (X(l(t)+n^{-1},t), X(n,t))$ is a smooth diffeomorphism for each $n\in \N$ sufficiently large.
  Let
  \[
  O=\left\{(X(x,s),s)\,:\,x \in (l(t)+n^{-1},n), s \in [0,t]\right\}.
  \]
  Thanks to Theorem \ref{thm:regularity-m-less-than-quarter}, there exists $C>0$ such that
  \[
  -C \leq \partial^2_x F(x,s) \leq 0 \qquad \text{ in } O
  \]
  in the viscosity sense.
  We differentiate the first equation in the Hamiltonian system with respect to $x$ and use the fact that $P(x,s)=\partial_x F(X(x,s),s)$ to yield that
  \[
  \partial_x \dot X(x,s) = \partial_x P(x,s) = \partial^2_x F(X(x,s),s)\cdot  \partial_x X(x,s) \geq -C \partial_x X(x,s).
  \]
  Thus, $\partial_x X(x,s)$ satisfies a differential inequality, and in particular, 
  \[
  s\mapsto e^{Cs} \partial_x X(x,s) \qquad \text{ is nondecreasing on } [0,t].
  \]
  It is then clear  that $\partial_x X(x,s) > 0$ for all $x\in (l(t)+n^{-1},n), s \in [0,t]$.
  By the inverse function theorem, $X^{-1}(\cdot,t)$ is then smooth, and
  \[
  F(x,t) = Z(X^{-1}(x,t),t)
  \]
 is smooth as $Z$ is also smooth.
 
 \smallskip
 
 Let us finally use the property $\dot P \geq 0$ to yield  that $F \in C^1([0,\infty)^2)$.
We only need to show that $\partial_x F$ is continuous at $(0,0)$.
For each $\epsilon>0$, we are able to find $r>0$ such that $F_0'(x) \in [m-\epsilon,m]$ for all $x\in [0,r]$.
Let 
\[
V_r=\{(y,s)\in [0,\infty)^2\,:\, y= X(x,s) \text{ for some  $x \in [0,r]$ and $s \geq 0$}\}.
\]
Then, as $\dot P \geq 0$, we see that $\partial_x F(y,s) \in [m-\epsilon,m]$ for all $(y,s) \in V_r$.
The proof is complete.
\end{proof}

It is worth noting that in this problem, for the characteristics, only the condition for $t=0$ is in use. The boundary condition for $x=0$, though still satisfied, is not being used (ineffective).

Now that we have $F \in C^\infty((0,\infty)^2) \cap C^1([0,\infty)^2)$, we continue to prove the last requirement to have that $F$ is a Bernstein function.

\begin{proposition} \label{prop:F changing signs derivatives}
  Assume all the assumptions in Theorem \ref{thm: existence of solution to C-F}.
  Then, 
  \[
  (-1)^{n+1} \partial^n_x F \geq 0 \quad\text{ in $(0,\infty)^2$ for all $n\in \N$\,.}
  \]
\end{proposition}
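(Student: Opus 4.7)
The plan is to proceed by induction on $n$. The base cases $n=1$ and $n=2$ are immediate from Theorem \ref{thm:regularity-m-less-than-quarter}, which gives $0 \leq \partial_x F \leq m$ and $-1 \leq x\partial_x^2 F \leq 0$ (hence $\partial_x^2 F \leq 0$). For the inductive step, suppose that for some $n \geq 2$ we have $(-1)^{k+1}\partial_x^k F \geq 0$ in $(0,\infty)^2$ for every $k = 1,\dots,n$, and set $v = \partial_x^{n+1}F$, $w = (-1)^{n+2} v$. The goal is to show $w \geq 0$.

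The first step is to differentiate \eqref{e:main} exactly $n+1$ times in $x$, using Leibniz's formula on the quadratic term $(\partial_x F - m)(\partial_x F - m-1)/2$ and on $F/x$. This yields a transport-type equation for $v$ of the form
\[
\partial_t v + \left(\partial_x F - m - \tfrac{1}{2}\right)\partial_x v + \sum_{k=1}^{n-1}\binom{n}{k}\partial_x^{n+1-k}F \,\partial_x^{k+1}F + \tfrac{v}{x} + \sum_{k=0}^{n}\tfrac{(n+1)!\,(-1)^{n+1-k}}{k!\,x^{n+2-k}}\,\partial_x^k F = 0.
\]
Multiplying by $(-1)^{n+2}$ and substituting $\partial_x^k F = (-1)^{k+1} w_k$ (with $w_k = (-1)^{k+1}\partial_x^k F$) converts the quadratic sum into $-\sum \binom{n}{k} w_{n+1-k} w_{k+2}$, i.e., a nonpositive quantity by induction, and converts the coefficient of $\partial_x^k F$ in the last sum into a positive multiple of $w_k$ (for $k\geq 1$) and of $-F/x^{n+2}$ (for $k=0$).

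The second step is to eliminate the singular negative term $-\frac{(n+1)!}{x^{n+2}}F$ using the Taylor-type identity obtained from repeated integration by parts on $F(0)=0$,
\[
\sum_{k=1}^{n}\frac{x^k}{k!} w_k(x,t) - F(x,t) = \int_0^x \frac{y^n}{n!} w_{n+1}(y,t)\,dy,
\]
which is valid thanks to the smoothness of $F$ established in Proposition \ref{prop: F smooth}. Using this identity to combine the $F/x^{n+2}$ term with the sum $\sum_{k=1}^{n}\frac{(n+1)!}{k!\,x^{n+2-k}} w_k$ rewrites the equation for $w$ as a linear integro-differential transport equation
\[
\partial_t w + \left(\partial_x F - m - \tfrac{1}{2}\right)\partial_x w + \left(\tfrac{1}{x} - (n+1)w_2\right) w + \tfrac{n+1}{x^{n+2}}\int_0^x y^n w(y,t)\,dy = S(x,t),
\]
where the right-hand side $S$ collects the quadratic products $\sum_{k=1}^{n-2}\binom{n}{k} w_{n+1-k}w_{k+2}$, which is $\geq 0$ by the inductive hypothesis. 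The initial data satisfies $w(x,0) = (-1)^{n+2}F_0^{(n+1)}(x) \geq 0$ since $F_0$, being the Bernstein transform of $c_0$, is itself a Bernstein function.

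The third step is a maximum-principle-and-characteristics argument, modeled on the proofs of Lemmas \ref{lem:F2concave} and \ref{l:interiorLowerBoundApprox}. Suppose for contradiction that $w$ becomes negative somewhere; let $t^* \in (0,\infty)$ be the first time for which $w(\cdot, t^*)$ is not everywhere nonnegative, and let $x^* \in (0,\infty)$ be a point where $w(x^*,t^*)=0$ attains the minimum along its characteristic emanating from the initial condition. At such a point, $\partial_x w(x^*,t^*)=0$, and the nonlocal integral $\int_0^{x^*} y^n w(y,t^*)\,dy$ is strictly positive because $w(\cdot,t^*)\geq 0$ and $w\not\equiv 0$ on $[0,x^*]$ (handled by localization to a small tube around the characteristic, as in Evans \cite[Chapter 3]{Evans10}). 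The equation then forces $\partial_t w(x^*,t^*) > 0$, contradicting that $t^*$ is a first-crossing time. The boundary $x=0$ is treated by the same limit procedure as in Lemma \ref{lem:boundaryEst2}, using the regularity $F \in C^1([0,\infty)\times (0,\infty))$ to pass to the limit in the equation.

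The main obstacle will be justifying rigorously the ``first-crossing'' selection and the strict positivity of the integral term at that point, since the coefficient $\frac{1}{x} - (n+1)w_2$ is not signed and the problem is singular at $x=0$. This is where the Taylor identity plays a crucial role: it replaces the singular explicit term $F/x^{n+2}$ by the nonlocal positivity-preserving integral, whose sign can be controlled from $w_{n+1}$ itself via an induction-compatible bootstrap.
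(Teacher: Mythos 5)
Your overall strategy — induct on $n$, desingularize the term $x^{-(n+2)}F$ via an integral representation of the remainder, and run a maximum-principle argument localized along the characteristics of the transport operator — is in the same spirit as the paper's proof. The paper's key bookkeeping device is to work with $W = x^{k-1}\partial_x^k F$ and to replace the singular piece using the identity
\[
x^{k-1}\partial_x^k\!\left(\frac{F}{x}\right) = \frac{W}{x} - \frac{k}{x^2}\int_0^x W(z,t)\,dz ,
\]
which is equivalent to your Taylor-remainder identity after the $(-1)^{k+1}$ normalization. However, there are several places where the proposal as written does not close.

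\textbf{Sign error in the Taylor identity.} Taylor's theorem with integral remainder at the base point $x$ evaluated at $0$ gives, with $w_k=(-1)^{k+1}\partial_x^k F$,
\[
\sum_{k=1}^{n}\frac{x^k}{k!}w_k(x,t) - F(x,t) = -\int_0^x \frac{y^n}{n!}\,w_{n+1}(y,t)\,dy ,
\]
not $+$. Propagating the correct sign, the nonlocal term enters the equation for $w=w_{n+1}$ with a minus sign,
\[
\partial_t w + \left(\partial_x F - m - \tfrac12\right)\partial_x w + \left(\tfrac1x - (n+1)w_2\right)w \;-\; \frac{n+1}{x^{n+2}}\int_0^x y^n w(y,t)\,dy = S \geq 0 .
\]
With your stated sign (plus in front of the integral), at a first-crossing point one would only get $\partial_t w = S - (\text{nonnegative})$, which does \emph{not} imply $\partial_t w>0$. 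So the argument collapses as written; it only recovers with the corrected sign.

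\textbf{The ``first-crossing'' and strict positivity.} Even with the corrected sign, the argument needs two things you do not establish: (i) the infimum of $w(\cdot,t)$ over $x\in(0,\infty)$ must be attained at some $x^*>0$, and (ii) the integral $\int_0^{x^*}y^n w\,dy$ must be strictly positive at the crossing time. Neither is automatic: $\partial_x^{n+1}F$ may blow up as $x\to0^+$, so the infimum could escape to the boundary, and nothing a priori rules out $w\equiv 0$ on $[0,x^*]$ at the first touching time, in which case you get $\partial_t w=S\geq 0$, not a contradiction. The paper resolves both issues at once: it first proves the a priori bound $\|x^{n-1}\partial_x^n F\|_{L^\infty((0,R)^2)}\leq C$ (Lemma \ref{lem:bound of higher derivatives}, which requires the auxiliary fact $\lim_{x\to 0^+}x\partial_x^2 F = 0$ from Lemma \ref{lem: finer on second derivative}) so that $W=x^{k-1}\partial_x^k F$ is bounded and continuous up to $x=0$; then it replaces $W(\cdot,0)$ by a modified initial datum $\widetilde W(\cdot,0)$ which agrees with $W(\cdot,0)$ on a neighborhood of the characteristic through a hypothetical positive value of $W$ but is extremely negative elsewhere, which forces the running maximum $\xi(t)=\max_x\widetilde W(x,t)$ to be attained in a fixed compact annulus away from $x=0$; finally it closes the argument by a Gronwall inequality for $\xi$ rather than by a first-crossing time. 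Your proposal gestures at these difficulties (``handled by localization'', ``same limit procedure as Lemma \ref{lem:boundaryEst2}'') without supplying the mechanism.

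\textbf{A further smoothness/well-definedness point.} Your Taylor identity requires $y^n w_{n+1}(y,t)$ to be integrable near $y=0$, which again is exactly the content of Lemma \ref{lem:bound of higher derivatives}; without proving this first, the manipulation is formal. In summary: your decomposition is equivalent to the paper's up to the change of variables $W=x^{k-1}\partial_x^k F$, but the paper's choice to carry the $x^{k-1}$ factor is precisely what makes the quantity bounded and the maximum-principle step rigorous, and your proposal omits the a priori estimates and the localization argument that make the scheme run.
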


Of course, we verified the above claim already when $n=1$.
A main difficulty to achieve this result is that $\partial^n_x F$ might be singular at $x=0$, and thus, we do not have much knowledge on the boundary behavior there.
This is also clear in view of the method of characteristics as described above.
Here is a way to fix this issue, which is motivated by Lemma \ref{lem:boundaryEst2}.

\begin{lemma}\label{lem: finer on second derivative}
  We have that, for all $t \geq 0$,
  \[
  \lim_{x \to 0^+} x \partial^2_x F(x,t)=0\,.
  \]
\end{lemma}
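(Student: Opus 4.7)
The plan is to adapt the time-averaging device from the proof of Lemma~\ref{lem:boundaryEst2} to the unregularized equation \eqref{e:main}, relying on the smoothness of $F$ on $(0,\infty)^2$ and the boundary values $\partial_x F(0,t)=m$ and $F(x,t)/x\to m$ established in Proposition~\ref{prop: F smooth}. First, since $F\in C^\infty((0,\infty)^2)$, I would differentiate \eqref{e:main} in $x$ and multiply by $x$ to obtain the pointwise identity
\[
x\,\partial_t\partial_x F+\bigl(\partial_x F-m-\tfrac{1}{2}\bigr)\bigl(x\,\partial_x^2 F\bigr)+\bigl(\partial_x F-F/x\bigr)=0\quad\text{on }(0,\infty)^2,
\]
which plays the role here that the $\epsilon$-viscous equation played in Lemma~\ref{lem:boundaryEst2}.

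I would then integrate this identity in $t$ over $[0,T]$ for arbitrary $T>0$ and pass to the limit $x\to 0^+$. Because $F\in C^1([0,\infty)^2)$ with $\partial_x F(0,t)=m$, both convergences $\partial_x F(x,t)\to m$ and $F(x,t)/x\to m$ are uniform on $t\in[0,T]$; hence the boundary term $x[\partial_x F(x,T)-F_0'(x)]$ and the integrated error $\int_0^T(\partial_x F-F/x)\,dt$ both vanish in the limit. Using $\partial_x F-m-\tfrac12\to -\tfrac12$ uniformly in $t$, the remaining middle term yields
\[
\lim_{x\to 0^+}\int_0^T x\,\partial_x^2 F(x,t)\,dt=0.
\]
Combining this with the concavity bound $x\,\partial_x^2 F\le 0$ from Theorem~\ref{thm:regularity-m-less-than-quarter} and applying Fatou's lemma to the nonnegative integrand $-x\,\partial_x^2 F\ge 0$ then gives $\lim_{x\to 0^+} x\,\partial_x^2 F(x,t)=0$ for almost every $t\in[0,T]$.

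The main obstacle I foresee is upgrading this almost-everywhere statement to every $t\ge 0$. My plan is a continuation-in-$t$ argument with base case $t=0$, where assumption \eqref{A:secondDerivativeBound2} together with $xF_0''(0)=0$ and the $C^{0,\beta}$ bound forces $xF_0''(x)\to 0$. To propagate the conclusion to $t_0>0$, I would use the smooth Hamiltonian characteristics $(X,P,Z)$ from Proposition~\ref{prop: F smooth}, writing $x\,\partial_x^2 F(X(x_0,t),t)=X\,\partial_{x_0}P/\partial_{x_0}X$ and analyzing this ratio as $x_0\to l(t_0)^+$ via the linearized characteristic ODE, in combination with the a.e.~limit from Fatou and the continuity of $x\,\partial_x^2 F$ on $(0,\infty)^2$. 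This pointwise upgrade is the technically delicate step because the PDE satisfied by $g=x\,\partial_x^2 F$ is itself singular at $x=0$, which is precisely what obstructs a more direct boundary analysis.
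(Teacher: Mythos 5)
Your first step — integrating the once-differentiated equation in $t$ over $[0,T]$, sending $x\to 0^+$, and using $\partial_x F\to m$ and $F/x\to m$ uniformly — is correct and does establish
$\lim_{x\to 0^+}\int_0^T \abs{x\,\partial_x^2 F(x,t)}\,dt = 0$, since $(\partial_x F - m -\tfrac12)\leq -\tfrac12$ and $x\partial_x^2 F\leq 0$ make the middle integrand nonnegative and bounded below by $\tfrac12\abs{x\partial_x^2 F}$.

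However, the Fatou step does not give what you claim. Fatou's lemma applied to the nonnegative family $-x\partial_x^2 F(x,\cdot)$ yields
$\int_0^T \liminf_{x\to 0^+}\bigl(-x\partial_x^2 F(x,t)\bigr)\,dt \leq \liminf_{x\to 0^+}\int_0^T\bigl(-x\partial_x^2 F\bigr)\,dt = 0$,
which forces only $\liminf_{x\to 0^+}\bigl(-x\partial_x^2 F(x,t)\bigr)=0$ for a.e.~$t$, \emph{not} the full limit. Absent monotonicity of $x\mapsto x\partial_x^2 F(x,t)$, $L^1([0,T])$-convergence gives no pointwise limit along the whole family $x\to 0^+$. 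And even if one could secure the a.e.~statement, your proposed upgrade to every $t$ via the ratio $\partial_{x_0}P/\partial_{x_0}X$ and a "linearized characteristic ODE" is left as a sketch; it is precisely the delicate part, and it is not at all clear that analyzing that ratio as $x_0\to l(t_0)^+$ behaves well near the grazing characteristic.

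The paper's proof sidesteps the a.e./everywhere dichotomy entirely. Instead of time-averaging, it differentiates the equation twice, sets $R(t)=\partial_x^2 F(X(t),t)$ along a characteristic, and — because $-1\leq x\partial_x^2 F\leq 0$ implies $-R^2-R/X\geq 0$ — derives the differential inequality $\dot R \geq 2(P-Z/X)/X^2$. Combined with $\abs{P-Z/X}\leq\omega(r)$ for $X\leq r$ (uniform $C^1$ continuity at the boundary) and the characteristic bound $\int_{s_0}^t X^{-2}\,ds\leq 2/X(t)$, this integrates to $\abs{X(t)R(t)}\leq CX(t)+4\omega(r)$ uniformly over all characteristics in the region $\{0<x\leq r,\ 0\leq t\leq T\}$. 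That uniform modulus — not merely the pointwise limit at each $t$ — is what Lemma~\ref{lem:bound of higher derivatives} actually uses. Your approach, even if repaired, would produce only a pointwise-in-$t$ statement and would require a separate argument to recover this uniformity. You should abandon the Fatou detour and derive the characteristic-wise differential inequality directly.
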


\begin{proof}
  Let $Q=\partial^2_x F$.
  Differentiate \eqref{e:main} with respect to $x$ twice, we get
  \begin{equation}\label{eq: Fxx}
  \partial_t Q  -\left(m+\frac{1}{2}-\partial_x F\right) \partial_x Q = - Q^2 - \frac{Q}{x} + 2\frac{x \partial_x F - F}{x^3}\,.
  \end{equation}
  A very important point here is that \eqref{eq: Fxx} has the same characteristics $X(x,t)$ as in Proposition \ref{prop: F smooth}.
 Recall that
 \[
 \dot X = -\left(m+\frac{1}{2}\right) + \partial_x F(X(t),t)\,,
 \]
  and \eqref{eq:X dot} holds.
  Let $R(t) = Q(X(t),t)$, then
  \[
  \dot R = - R^2 - \frac{R}{X} + 2\frac{X P - Z}{X^3}\,. 
  \]
  Since $-1\leq x \partial^2_x F \leq 0$, we infer that $R \leq 0$, $1+RX\geq 0$, and
  \begin{equation}\label{eq: important ineq R}
  \dot R = - R^2 - \frac{R}{X} + 2\frac{X P - Z}{X^3} \geq 2\frac{X P - Z}{X^3}
  =\frac{2}{X^2}\left(P - \frac{Z}{X}\right)\,.
  \end{equation}
  This differential inequality about $R$ will be used to give us the desired result.
  Note that $F \in C^1([0,\infty)^2)$, and for each $t\geq 0$,
  \[
  \lim_{x\to 0^+} \left(\partial_x F(x,t) - \frac{F(x,t)}{x}\right)=0\,.
  \]
  So, for fixed $T>0$, there exists a modulus of continuity $\omega:(0,\infty) \to [0,\infty)$ with $\lim_{r\to 0^+} \omega(r)=0$ such that for all $r>0$,
  \[
  \left| \partial_x F(x,t) - \frac{F(x,t)}{x}\right| \leq \omega(r) \quad \text{ for all } (x,t) \in (0,r]\times [0,T]\,.
  \]
  Fix $r>0$ and on each given characteristic $X(x,\cdot)$, which reaches $0$ in finite time, take $s_0 \geq 0$ such that $0<X(x,s_0) \leq r$. 
  For $s \geq s_0$, we use this in \eqref{eq: important ineq R} to get that
  \[
  \dot R(s) \geq -\frac{2 \omega(r)}{X(s)^2}\,.
  \]
  Integrate this and use \eqref{eq:X dot} to yield, for $t \geq s_0$,
  \begin{align*} 
  R(t) &\geq R(s_0) - 2 \omega(r)\int_{s_0}^t \frac{1}{X(s)^2}\,ds \geq R(s_0) - 2 \omega(r)\int_{s_0}^t \frac{-2 \dot X(s)}{X(s)^2}\,ds\\
  &= R(s_0) - 4\omega(r) \left(\frac{1}{X(t)} - \frac{1}{X(s_0)} \right)\,.
  \end{align*} 
  Thus,
  \[
  X(t) R(t) \geq X(t) R(s_0) - 4 \omega(r)\,.
  \]
  Besides, $X(t)R(t) \leq 0$ thanks to Theorem \ref{thm:regularity-m-less-than-quarter}.
  Combine the two inequalities, we get, for $X(t) \leq r$ and $t \in [0,T]$,
  \begin{equation}\label{another modulus}
  |X(t)R(t)| \leq CX(t) + 4\omega(r),
  \end{equation}
  where $C=\max_{x\in [0,r]} |F_0''(x)| + \max_{t\in [0,T]} |\partial_x^2 F(r,t)|$.
  Let $X(t) \to 0^+$ and $r \to 0^+$ in this order in the above to get the conclusion.
\end{proof}

\begin{lemma}\label{lem:bound of higher derivatives}
  Fix $n\in \N$ with $n \geq 2$, and $R>0$.
  Then, there exists a constant $C=C(n,R)>0$ such that
  \begin{equation}\label{bhd}
  \|x^{n-1} \partial^n_x F(x,t)\|_{L^\infty((0,R)^2)} \leq C\,.
  \end{equation}
\end{lemma}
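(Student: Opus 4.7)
The plan is to prove \eqref{bhd} by induction on $n$; the base case $n=2$ is Theorem \ref{thm:regularity-m-less-than-quarter} together with Lemma \ref{lem: finer on second derivative}. For $n \geq 3$, fix $R>0$, assume the bound at orders $2,\ldots,n-1$ on every square $(0,R')^2$, and work on $(0,R)^2$ using the fact that every characteristic reaching $(y,t)\in(0,R)^2$ starts in $(0,2R)$. Differentiating \eqref{e:main} $n$ times in $x$ and then tracing the resulting equation along the characteristics $X(x,t)$ of Proposition \ref{prop: F smooth} -- for which $\dot X = \partial_x F - m - \tfrac{1}{2} \in [-1, -\tfrac{1}{2}]$ -- while writing $W_k(t) = X(t)^{k-1} \partial_x^k F(X(t),t)$ so that each $\partial_x^k F = W_k/X^{k-1}$, careful bookkeeping of powers of $X$ produces the ODE
\[
\dot W_n(t) = -\frac{\widetilde\alpha(t)}{X(t)}\,W_n(t) + \frac{\gamma(t)}{X(t)},
\]
where $\widetilde\alpha = (n-1)(m + \tfrac{1}{2} - W_1) + n W_2 + 1$ and $\gamma$ collects the nonlinear products $W_jW_k$ with $j+k=n+2$ and $j,k\in\{3,\ldots,n-1\}$, bounded multiples of $W_k$ for $k\in\{2,\ldots,n-1\}$ coming from $\partial_x^n(F/x)$, and the combined $k\in\{0,1\}$ contribution, which simplifies to a bounded multiple of $W_1 - F/X$. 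By the inductive hypothesis and $F\in C^1([0,\infty)^2)$, $\gamma$ is uniformly bounded on $(0,R')\times(0,R)$.

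The crucial point is that $\widetilde\alpha$, while not sign-definite globally (it can reach $(3-n)/2 < 0$ for $n\geq 4$ when $(W_1,W_2) = (m,-1)$), becomes strictly positive near the boundary. Indeed, Proposition \ref{prop: F smooth} gives $W_1 \to m$ uniformly in $t$, and the proof of Lemma \ref{lem: finer on second derivative} delivers $|W_2(x,t)| \leq Cx + 4\omega(r)$ uniformly in $t\in[0,R]$, so $W_2 \to 0$ uniformly; hence there is $\delta = \delta(n,R) > 0$ such that $\widetilde\alpha \geq C_0 := (n-1)/4$ whenever $X(t) < \delta$ and $t \in [0,R]$. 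Splitting each characteristic at the time $t_\delta$ at which $X$ first crosses $\delta$ then closes the argument: on $[0,t_\delta]$ where $X\geq\delta$, all coefficients are bounded, so Gronwall applied with the initial bound $|W_n(0)| \leq ((n-1)/e)^{n-1}\,m_1(0)$ (which follows from $\partial_x^n F_0(x) = (-1)^{n+1}\int_0^\infty s^n e^{-sx} c_0(s)\,ds$ and $(sx)^{n-1}e^{-sx}\leq ((n-1)/e)^{n-1}$) gives $|W_n(t_\delta)|\leq C(n,R)$; on $[t_\delta,t]$ where $X\leq\delta$, the dissipation factor $E(s,t):=\exp\bigl(-\int_s^t \widetilde\alpha/X\,d\tau\bigr)$ satisfies $E(s,t)\leq(X(t)/X(s))^{C_0}$ via $1/X \geq -2(\log X)'$, and the direct estimate
\[
\int_{t_\delta}^t \frac{|\gamma(s)|}{X(s)}\,E(s,t)\,ds \leq C_1\,X(t)^{C_0}\int_{t_\delta}^t X(s)^{-1-C_0}\,ds \leq \frac{2C_1}{C_0}
\]
shows that the singular source $\gamma/X$ integrates to something bounded against $E$; the variation of constants formula then yields $|W_n(t)|\leq C(n,R)$.

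The principal obstacle is the absence of a uniform sign for $\widetilde\alpha$ once $n\geq 4$, which would cause a naive Gronwall bound to blow up polynomially as $y\to 0^+$. Overcoming this hinges on the two uniform boundary estimates from Proposition \ref{prop: F smooth} and Lemma \ref{lem: finer on second derivative}, which together force $\widetilde\alpha$ to be bounded below by a positive constant in a neighborhood of $X = 0$ independent of $t$; the dissipation available on this neighborhood is precisely what absorbs the singular source $\gamma/X$, so that the induction closes with a constant depending only on $n$ and $R$.
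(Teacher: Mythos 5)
Your proof is correct and takes essentially the same route as the paper: integrate along the characteristics from Proposition~\ref{prop: F smooth}, exploit the bound $\int_{s_0}^t X(s)^{-i}\,ds \le \tfrac{2}{i-1}X(t)^{1-i}$ coming from $\dot X\in[-1,-\tfrac12]$, and use the smallness of $x\partial_x^2 F$ near $x=0$ (Lemma~\ref{lem: finer on second derivative}) to absorb the dangerous $n(\partial_x^2 F)\partial_x^n F$ contribution. The main difference is bookkeeping: you derive the exact linear ODE for $W_n = X^{n-1}\partial_x^n F$ and read off the damping coefficient $\widetilde\alpha$, then use an integrating factor, whereas the paper works with $R = \partial_x^n F$, assumes WLOG that $R$ does not change sign, and runs a running-max/absorption estimate ($M\le C + M/2$); your formulation is arguably cleaner, makes explicit \emph{why} the scheme closes (positivity of $\widetilde\alpha$ near $x=0$, which only needs $W_1\le m$ and $W_2\to 0$, not $W_1\to m$), and dispenses with the sign case split. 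Two small arithmetic slips worth correcting: at $(W_1,W_2)=(m,-1)$ one gets $\widetilde\alpha = (1-n)/2$, not $(3-n)/2$, and this is already negative for $n\ge 2$; and since $\dot X\ge -1$ the correct comparison is $1/X \ge -(\log X)'$ (the factor $2$ goes the other way, $1/X \le -2(\log X)'$, which is what you actually use to change variables $ds\mapsto dX$ in the last display). Neither affects the validity of the argument.
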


\begin{proof}
  The proof is rather tedious with a lot of terms appearing in the differentiations.
  We prove by induction with respect to $j=n$ in \eqref{bhd}.
  The base case $j=2$ was already done by Theorem \ref{thm:regularity-m-less-than-quarter}.
  Assume that \eqref{bhd} holds true for $j=n-1 \geq 2$, and we now show that it is also true for $j=n$. 
  
  \emph{Step 1.}
  Differentiate \eqref{e:main} with respect to $x$ by $n$ times, we get
  \[
  \partial_t \partial^n_x F -\left(m+\frac{1}{2}\right)\partial^{n+1}_x F +\frac{1}{2}\partial^n_x \left((\partial_x F)^2\right) +\partial^n_x \left(\frac{F}{x}\right)=0\,.
  \]
  Let $Q=\partial^n_x F$. Then
  \begin{equation}\label{eq:Q}
      \partial_t Q  -\left(m+\frac{1}{2}-\partial_x F\right) \partial_x Q =f(x,t)\,,
  \end{equation}
  where the source term $f$ is 
  \begin{multline*}
      f(x,t)= -n (\partial^2_x F) Q - \frac{Q}{x} - \frac{1}{2}\sum_{k=2}^{n-2} \frac{n! (\partial^{k+1}_x F)  (\partial^{n+1-k}_x F)}{k!(n-k)!} 
      - \sum_{k=0}^{n-1} \frac{(-1)^{n-k}n! (\partial^k_x F)}{k! \ x^{n-k+1}}\,.
  \end{multline*}
 Recall that \eqref{eq:Q} has the same characteristics $X(x,t)$ as in Proposition \ref{prop: F smooth}
 \[
 \dot X = -\left(m+\frac{1}{2}\right) + \partial_x F(X(t),t)\,,
 \]
  and \eqref{eq:X dot} holds.
  Thanks to Lemma \ref{lem: finer on second derivative} and \eqref{another modulus}, for fixed $T>0$, we are able to find a modulus of continuity $\omega:(0,\infty) \to [0,\infty)$ with $\lim_{r\to 0^+} \omega(r)=0$ such that 
  \[
  \left| x \partial^2_x F(x,t) \right| \leq \omega(r) \quad \text{ for all } (x,t) \in (0,r]\times [0,T]\,.
  \]
  Let $R(t)=Q(X(t),t)$  and fix $r>0$.
  As $X(t)$ reaches 0 in finite time, we can pick $s_0 \geq 0$ to be the smallest constant such that $X(s_0) \leq r$.
  Surely, $s_0=0$ in case $X(0)=x \leq r$.
    Without loss of generality, we assume that for some $t \geq s_0$, $X(t)>0$, and
    \begin{equation}  \label{eq:definitionM}
      M \defeq X(t)^{n-1}|R(t)| = \max_{s\in [s_0,t]} X(s)^{n-1}|R(s)|>0\,.
    \end{equation}

    \emph{Step 2.}
      It is our goal to bound $X(t)^{n-1} R(t)$ uniformly in $x$.
     Again, without loss of generality, we may assume that $R(s)$ does not change sign for $s\in (s_0,t]$ (otherwise, change $s_0$ to be a bigger constant such that $R(s_0)=0$ and $R(s)$ does not change sign for $s\in (s_0,t]$).
      Let us note right away that $-\frac{Q}{X}=-\frac{R}{X}$ is a good term and needs not to be controlled.
    Indeed, if $R>0$ in $(s_0,t)$, then $-\frac{R}{X} \leq 0$ there, and so
  \begin{equation} \label{ine:R-est}
    \begin{aligned}
  \abs{R(t) }&= R(t)=R(s_0) + \int_{s_0}^t f(X(s),s)\,ds\\
  &\leq R(s_0) + \int_{s_0}^t -n(\partial^2_x F)R\,ds\\
  &\quad +\int_{s_0}^t \left(- \frac{1}{2}\sum_{k=2}^{n-2} \frac{n! (\partial^{k+1}_x F)  (\partial^{n+1-k}_x F)}{k!(n-k)!} 
      - \sum_{k=0}^{n-1} \frac{(-1)^{n-k}n! (\partial^k_x F)}{k!X^{n-k+1}}\right) \,ds\,.
    \end{aligned}
  \end{equation}
  A similar claim holds in case $R<0$ in $(s_0,t)$.
  A key point that we need here in order to bound the above complicated sum is that, for $i \geq 2$, by~\eqref{eq:X dot}
  \begin{equation}\label{eq: bound of power X}
      \int_{s_0}^t \frac{1}{X(s)^i}\,ds \leq \int_{s_0}^t \frac{-2\dot X(s)}{X(s)^i}\,ds
      \leq \frac{2}{i-1}\left(\frac{1} {X(t)^{i-1}} - \frac{1}{r^{i-1}} \right)\,.
  \end{equation}
  This, together with the induction hypothesis, gives us that
  \begin{equation}  \label{ine:Rbound1}
  X(t)^{n-1}\left| \int_{s_0}^t \left(- \frac{1}{2}\sum_{k=2}^{n-2} \frac{n! (\partial^{k+1}_x F)  (\partial^{n+1-k}_x F)}{k!(n-k)!} 
      - \sum_{k=0}^{n-1} \frac{(-1)^{n-k}n! (\partial^k_x F)}{k!X^{n-k+1}}\right) \,ds \right| \leq C\,.
  \end{equation}
  Let us next bound the remaining term containing $R$.
  As $-\omega(r) \leq x \partial^2_x F \leq 0$ in $(0,r] \times [0,T]$, one has
  \begin{equation} \label{ine:Rbound2}
    \begin{aligned} 
    n \left|\int_{s_0}^t (\partial^2_x F) R\,ds \right| &\leq \int_{s_0}^t \frac{n\omega(r)M}{X(s)^{n}}\,ds \leq \frac{2n\omega(r)M}{n-1}\left(\frac{1}{X(t)^{n-1}} - \frac{1}{r^{n-1}}\right)\\
    &\leq \frac{3 \omega(r)M}{X(t)^{n-1}} \leq \frac{M}{2 X(t)^{n-1}}
    \end{aligned}
  \end{equation}
  for $r>0$ small enough.
  
  Combining~\eqref{eq:definitionM},~\eqref{ine:R-est},~\eqref{ine:Rbound1} and~\eqref{ine:Rbound2}, we deduce that
  \[
  M \leq C +\frac{M}{2}\,,
  \]
which yields that $M \leq 2C$. By definition of $M$, $X(t)$ and $R(t)$, we reach the desired result. 
\end{proof}

We are now ready to prove Proposition \ref{prop:F changing signs derivatives} by induction.
Our idea here is to use the maximum principle for $x^{k-1} \partial^k_x F$ for $k \geq 3$ in the inductive argument.
However, as the behavior of $x^{k-1} \partial^k_x F$ is unclear as $x \to 0^+$,  we need to use localizations around characteristics to take care of this issue.

\begin{proof}[{\bf Proof of Proposition \ref{prop:F changing signs derivatives}}]
  Let us show that $(-1)^{n+1} \partial^n_xF \geq 0$ in $(0,\infty)^2$ by induction.
  By Theorem~\ref{thm:regularity-m-less-than-quarter}, this is true for $n=2$ already.
  Assume that this is true for all $n \leq k-1$ for some $k \geq 3$. 
  We now show that this is true for $n=k$.
  Let us just deal with the case that $k$ is even as the other case can be done analogously.
  
  \emph{Step 1.}
  Differentiate \eqref{e:main} with respect to $x$ by $k$ times, we get
  \begin{equation}\label{eq: diff k}
  \partial_t \partial^k_x F -\left(m+\frac{1}{2}\right)\partial^{k+1}_x F +\frac{1}{2}\partial^k_x \left((\partial_x F)^2\right) +\partial^k_x \left(\frac{F}{x}\right)=0\,.
  \end{equation}
 Let $W(x,t)= x^{k-1} \partial^k_x F$, and we aim at deriving a PDE for $W$.
 As always, the last term on the left hand side above is not so easy to deal with. 
 The following is a new insight to handle this term thanks to Lemma \ref{lem:bound of higher derivatives},
 \begin{align*}
    x^{k-1}\partial^k_x \left(\frac{F}{x}\right) 
    &=x^{k-1}\partial^k_x \left(\int_0^1 \partial_x F(rx,t)\,dr\right)  \\
    &= x^{k-1}\int_0^1 r^k \partial^{k+1}_x F(rx,t)\,dr
    =\frac{1}{x^2}\int_0^x z^k \partial^{k+1}_x F(z,t)\,dz\\
    &=\frac{W(x,t)}{x} - \frac{k}{x^2}\int_0^x W(z,t)\,dz\,.
 \end{align*}
 We used integration by parts in the last equality above.
 Multiply \eqref{eq: diff k} by $x^{k-1}$ and use the above identity, we arrive at
 \begin{multline}\label{eq: W}
     \partial_t W -\left(m+\frac{1}{2}-\partial_x F \right) \left( \partial_x W - (k-1)\frac{W}{x} \right) + \frac{W}{x} - \frac{k}{x^2}\int_0^x W(z,t)\,dz\\
     = - k (\partial^2_x F) W - x^{k-1} \sum_{i=2}^{k-2}\frac{k! (\partial^{i+1}_x F)(\partial^{k+1-i}_x F)}{i!(k-i)!}\,.
 \end{multline}
 Again, this equation has the same characteristics $X(x,t)$ as in Proposition~\ref{prop: F smooth},
 \[
 \dot X = -\left(m+\frac{1}{2}\right) + \partial_x F(X(t),t)
 \]
  and \eqref{eq:X dot} holds. 
  This clear localization of characteristics is very important.
  
  \emph{Step 2.} 
  We now need to show that $W \leq 0$ in $(0,\infty)^2$.
  Assume by contradiction that there exists $(x_0,T) \in (0,\infty)^2$ such that $W(x_0,T)>0$.
  Of course, $x_0=X(z,T)$ for some $z>x_0$.
  
  For the initial condition of $W$, it is not hard to see that $W(0,0)=0$ and $W(x,0) \leq 0$ for $x\in [0,\infty)$.
  Choose $z_1,z_2$ very close to $z$ such that $z_1<z<z_2$, and define a new initial condition $\widetilde W(\cdot,0)$, which is smooth on $[0,\infty)$, such that
  \[
  \begin{cases}
  \widetilde W(x,0) = W(x,0) \quad &\text{ for } x \in [z_1,z_2]\,,\\
  \widetilde W(x,0) \leq W(x,0) \quad &\text{ for } x \notin [z_1,z_2]\,.
  \end{cases}
  \]
  Let $\widetilde W$ be the solution to \eqref{eq: W} corresponding to this new initial condition $\widetilde W(\cdot,0)$.
  Because of the locality of the characteristics,
  we see that $\widetilde W(x_0,T)=W(x_0,T)$.
  In fact, we can choose $\widetilde W(\cdot,0)$ to be as negative as we wish outside of $[z_1,z_2]$.
  For our purpose, we choose $z_1,z_2$, and $\widetilde W(\cdot,0)$ so that
  \begin{equation}\label{eq: W restriction}
      \widetilde W(x,t) < \widetilde W(X(z,t),t) \quad \text{ for all } x \in \left(0,\frac{x_0}{2}\right] \cup [z+1,\infty), \, t \in [0,T]\,.
  \end{equation}

  Now, slightly abusing the notations, let us assume that $W$ satisfies \eqref{eq: W restriction} as well (in other words, write $W$ in place of $\widetilde W$ for simplicity).
  For each $t \in [0,T]$, by~\eqref{eq: W restriction}, there exists  $x_t \in \left(\frac{x_0}{2},z+1\right)$ so that
  \[
  \xi(t)\defeq \max_{x \in [0,\infty)} W(x,t) = W(x_t,t) \,.
  \]

  We use the maximum principle in \eqref{eq: W} to get an estimate for $\xi$.
  Notice that, as $k$ is even, $(\partial^{i+1}_x F)(\partial^{k+1-i}_x F)\geq 0$ for $2 \leq i \leq k-2$ always by the induction hypothesis.
  At $(x_t,t)$, we have $\partial_x W(x_t,t)=0$, and
  \[
  \frac{1}{x_t}\int_0^{x_t} W(z,t)\,dz \leq W(x_t,t)\,.
  \]
  Therefore,
  \[
  \xi'(t) + \frac{\xi(t)}{x_t} \left ((k-1)\left(m - \frac{1}{2}-\partial_x F \right) + k x_t \partial^2_x F(x_t,t) \right) \leq 0\,.
  \]
  Note that $x_t \in \left(\frac{x_0}{2},z+1\right)$, and
  \[
  \left |(k-1)\left(m - \frac{1}{2}-\partial_x F \right) + k x_t \partial^2_x F(x_t,t) \right| \leq 2k\,.
  \]
  As $\xi(0)\leq 0$, by the usual differential inequality, we get that $\xi(t) \leq 0$ for all $t\in [0,T]$.
  In particular, $0 \geq \xi(T) \geq W(x_0,T) > 0$, which is absurd.
  The proof is complete.
  \end{proof}

\begin{proof}[{\bf Proof of Theorem \ref{thm: existence of solution to C-F}}]
  The result follows immediately by combining Propositions \ref{prop: F smooth},  \ref{prop:F changing signs derivatives} and Theorem~\ref{t:bernsteinRep}.
\end{proof}

\section{Equilibria}
In this section, we study the equilibria of equation \eqref{e:viscosity} in the case $0<m\leq 1$. 
At equilibrium, the equation reads
\begin{equation} \label{e:equilibrium}
   \frac{1}{2}(\partial_x F - m)(\partial_x F - m -1) + \frac{F}{x} - m =  0 \,.
\end{equation}
Let us emphasize again that we search for Lipschitz, sublinear viscosity solution $F$ which satisfies $0\leq F(x) \leq mx$ for $x\in [0,\infty)$.
\begin{lemma} \label{l:minimizers}
  Suppose $0<m <1$. 
  Let $F$ be a Lipschitz, sublinear viscosity solution to equation~\eqref{e:equilibrium} which satisfies $0\leq F(x) \leq mx$ for $x\in [0,\infty)$. 
  Then, there exists a constant $C>0$ so that all the local minimums of $F$ belong to $[0,C]$. 
\end{lemma}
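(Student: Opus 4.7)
The plan is to exploit the viscosity supersolution property at a local minimum: if $x_0>0$ is a local minimum of $F$, then the constant function $\phi\equiv F(x_0)$ satisfies $F-\phi\geq 0 = F(x_0)-\phi(x_0)$ locally, so $F-\phi$ has a local minimum at $x_0$ and is therefore an admissible test function for the supersolution inequality. This will give an explicit pointwise lower bound on the ratio $F(x_0)/x_0$, which is incompatible with sublinearity of $F$ for large $x_0$. The whole argument only uses the structure of the Hamiltonian at $p=0$ and the sublinearity assumption.

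Concretely, first I would write down the supersolution inequality $H(0,F(x_0),x_0)\geq 0$, namely
\[
\tfrac{1}{2}(0-m)(0-m-1)+\frac{F(x_0)}{x_0}-m\;\geq\;0,
\]
which simplifies to
\[
\frac{F(x_0)}{x_0}\;\geq\; m-\frac{m(m+1)}{2}\;=\;\frac{m(1-m)}{2}\;=:\;\kappa_m.
\]
Since $0<m<1$, the constant $\kappa_m$ is strictly positive. Thus every interior local minimum $x_0$ of $F$ satisfies $F(x_0)\geq \kappa_m x_0$.

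Next I would invoke sublinearity: since $\lim_{x\to\infty}F(x)/x=0$, there exists $C>0$ (depending only on $m$ and $F$) such that $F(x)/x<\kappa_m$ for every $x>C$. Combining this with the lower bound obtained above, no interior local minimum can lie in $(C,\infty)$. Together with the trivial fact that local minima are in $[0,\infty)$, this proves the lemma.

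I do not expect any serious obstacle: the only delicate point is justifying that $\phi\equiv F(x_0)$ is a legitimate test function from below at a local minimum, which is immediate from the definition of viscosity supersolution (no regularity of $F$ beyond continuity is needed, and $F$ is Lipschitz by hypothesis). The singularity of $H$ in $x$ at $x=0$ plays no role here because we only evaluate at an interior point $x_0>0$, and we do not even need to verify the subsolution property.
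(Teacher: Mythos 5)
Your proposal is correct and follows essentially the same argument as the paper: you apply the viscosity supersolution test at a local minimum with the constant test function (so $p=0$), derive $F(x_0)/x_0 \geq \tfrac{1}{2}m(1-m)$, and contrast this with sublinearity. The paper phrases it as a contradiction with a sequence $x_n \to \infty$, but the content is identical.
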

\begin{proof}
  By contradiction, if there exists a sequence of local minimums $\{x_n\} \to \infty$ of $F$, then by the supersolution test, we have
  \begin{equation*}
    \frac{1}{2}m(m+1) + \frac{F(x_n)}{x_n} - m \geq 0\,.
  \end{equation*}
  This means, for $n\in \N$,
  \begin{equation*}
    \frac{F(x_n)}{x_n} \geq \frac{1}{2} m(1-m) >0\,,
  \end{equation*}
  which is a contradiction as $F(x_n)/x_n \to 0$ by the sublinearity assumption.
\end{proof}

\begin{proposition} \label{prop:nonExistenceSublinearSoln}
  Suppose $0<m <1$. 
  Then equation~\eqref{e:equilibrium} has no Lipschitz, sublinear viscosity solution $F$ which satisfies $0\leq F(x) \leq mx$ for $x\in [0,\infty)$.
\end{proposition}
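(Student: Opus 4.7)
Assume for contradiction that such an $F$ exists. My plan is to use Lemma~\ref{l:minimizers} to force $F$ to be eventually monotone on $[0,\infty)$, and then read off a contradiction from the two-branch structure of the equilibrium equation. For the first step, I would observe that if $C < y_1 < y_2 < y_3$ satisfied $F(y_2) < \min\{F(y_1), F(y_3)\}$, then the minimum of $F$ on $[y_1, y_3]$ would be attained at an interior point of $(C, \infty)$, producing a local minimum forbidden by Lemma~\ref{l:minimizers}. This ``no valley'' restriction, combined with an elementary argument (if $F(z_1) > F(z_2)$ for some $C < z_1 < z_2$, then the no-valley property first forces $F \le F(z_2)$ on $(z_2, \infty)$ and then forces $F$ to be non-increasing there), yields the dichotomy that $F$ is either non-decreasing on $(C, \infty)$ or non-increasing on some tail $[C', \infty)$.

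Next I would extract the branch structure of $F'$. Since $F$ is Lipschitz, it is differentiable almost everywhere, and at every such point the viscosity sub- and super-solution conditions upgrade~\eqref{e:equilibrium} to an identity, so $F'(x)$ must equal one of the two roots
\[
p_\pm(x) \defeq m + \frac{1 \pm \sqrt{1 + 8(m - F(x)/x)}}{2}.
\]
Sublinearity yields $F(x)/x \to 0$, hence $p_\pm(x) \to m + \frac{1 \pm \sqrt{1+8m}}{2}$ as $x \to \infty$. A short algebraic check, using the equivalence $m < 1 \Leftrightarrow \sqrt{1+8m} > 1 + 2m$, shows that these two limiting values have opposite signs: the upper branch tends to a strictly positive constant while the lower branch tends to a strictly negative one.

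With both ingredients in hand I would close each monotonicity case. If $F$ is non-decreasing on a tail, then $F' \ge 0$ almost everywhere, and since $p_-(x) < 0$ for all sufficiently large $x$, we are forced to have $F'(x) = p_+(x)$ a.e. on that tail; integrating and using the strictly positive limit gives $\liminf_{x\to\infty} F(x)/x > 0$, contradicting sublinearity. If $F$ is non-increasing on a tail, then $F' \le 0$ a.e. forces $F'(x) = p_-(x)$ a.e., and integrating the eventually negative slope drives $F(x) \to -\infty$, contradicting $F \ge 0$. I expect the main obstacle to be the first step: extracting eventual monotonicity from the purely topological statement ``no local minima on $(C,\infty)$'' for a merely Lipschitz (and a priori oscillating) function, without appealing to any regularity beyond continuity. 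The branch analysis in Steps two and three is then a direct consequence of Rademacher's theorem and the quadratic form of the Hamiltonian.
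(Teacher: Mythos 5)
Your proof is correct and takes a genuinely different route from the paper's in both branches of the monotonicity dichotomy.

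On the opening monotonicity step, the paper simply asserts that Lemma~\ref{l:minimizers} gives eventual monotonicity; your ``no valley'' argument supplies the missing details rigorously, which is a nice clarification. From there, the paper works directly with the viscosity formulation: in the non-decreasing case it rules out the ``upper'' branch $F'\ge m+1$ by choosing the affine test function $\phi(x)=(m+\tfrac12)x$ at an interior minimum of $F-\phi$, then bounds $\tfrac12(F'-m)(F'-m-1)$ by $\tfrac12 m(m+1)$ to force $F(x)/x\ge\tfrac12 m(1-m)>0$; in the non-increasing case it uses integrability of $F'$ to extract a sequence $y_n\to\infty$ with $F'(y_n)\to 0$ and plugs that into the equation. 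You instead solve the quadratic for $p=F'(x)$ at points of differentiability (where the viscosity conditions do indeed upgrade to an identity, since $D^\pm F(x)=\{F'(x)\}$ there) and analyze the two roots $p_\pm(x)$. Your key observation that the strict inequality $\sqrt{1+8m}>1+2m$, equivalent to $m<1$, forces $p_+\ge m+1>0$ uniformly while $p_-(x)$ tends to a strictly negative limit is clean and symmetric: the sign constraint from monotonicity picks a branch, and each branch is incompatible with the bounds $0\le F\le mx$ and sublinearity. Both proofs rely on the a.e.\ classical satisfaction of the equation by a Lipschitz viscosity solution; the paper's Case~1 additionally needs a separate sup/sub-solution test, whereas your route treats the two cases uniformly through the root formula, at the cost of carrying the explicit square-root expression. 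Either approach is valid, and the trade-off is essentially a matter of taste: the paper's argument generalizes more readily when the Hamiltonian is not an explicit quadratic, while yours is more transparent for this specific equation.
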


\begin{proof}
  Suppose by contradiction that $F$ is a Lipschitz, sublinear solution to equation~\eqref{e:equilibrium} and $0\leq F(x) \leq mx$ for $x\in [0,\infty)$.
  By Lemma~\ref{l:minimizers}, there exists a $C>0$ so that $F(x)$ is monotone on $[C,\infty)$, i.e., for a.e. $x\in [C,\infty)$, either
  \begin{equation*}
    F'(x) \geq 0 \quad \text{ or } \quad F'(x) \leq 0 \,.
  \end{equation*}
Let us consider two cases in the following.

\smallskip

  \emph{Case 1.} $F'(x) \geq 0$ for a.e. $x \geq C$.  
  Since $F(x) \leq mx$, we have
  \begin{equation*}
    \frac{1}{2}( F'(x) - m)( F'(x) - m -1) = m - \frac{F(x)}{x} \geq 0 \,.
  \end{equation*}
  Thus, either $F'(x) \leq m$ or $F'(x) \geq m+1$. 
  We claim that $F'(x) \leq m$ for a.e. $x \geq C$ by changing $C$ to be a bigger value if needed.
  Indeed, assume otherwise, that this is not the case.
  Since $F(x) \leq mx$, we cannot have that $F'(x) \geq m+1$ for a.e. $x>C$.
  Then, we can find $x_2>x_1>C$ such that $F'(x_1) \leq m$, and $F'(x_2) \geq m+1$.
  Let $\phi(x)=(m+\frac{1}{2})x$ for $x\in [x_1,x_2]$ be a test function, and let $x_3 \in [x_1,x_2]$ be a minimum point of $F-\phi$ on $[x_1,x_2]$.
  As
  \[
  F'(x_1) \leq m < \phi'(x_1)=m+\frac{1}{2} = \phi'(x_2) < m+1 \leq F'(x_2)\,,
  \]
it is clear that $x_3 \neq x_1$ and $x_3 \neq x_2$.
In other words, $x_3 \in (x_1,x_2)$, and one is able to use the viscosity supersolution test to yield that
\[
0 \leq \frac{1}{2}\left(m+\frac{1}{2}-m\right)\left(m+\frac{1}{2}-m-1\right) +\frac{F(x_3)}{x_3}-m \leq -\frac{1}{8}\,,
\]
 which is absurd. Therefore,
  \begin{equation*}
    0 \leq F'(x) \leq m \quad \text{ for a.e. } x \geq C \,.
  \end{equation*}
  In particular, for a.e. $x\geq C$,
  \begin{equation*}
    \frac{1}{2}( F'(x) - m)(F'(x) - m - 1) \leq \frac{1}{2}( 0 - m) (0 - m -1) = \frac{1}{2} m(m+1) \,,
  \end{equation*}
  which implies 
  \begin{equation*}
    \frac{F(x)}{x} \geq m - \frac{1}{2} m(m+1) = \frac{1}{2}m(1-m)>0\,.
  \end{equation*}
  But this means that $F$ is not sublinear.

  \smallskip

  \emph{Case 2.} $F'(x) \leq 0$ for a.e. $x \geq C$.  
   Then $F$ is decreasing on $[C,\infty)$ and there exists $\alpha \geq 0$ such that $\alpha=\lim_{x\to \infty} F(x)$. 
  Consequently, 
  \[
  \lim_{x \to \infty} \left( \frac{1}{2}(F'(x)-m)(F'(x)-m-1) -m\right) =0\,.
  \]
On the other hand, as $F \geq 0$ always, we can find a sequence $\{y_n\} \to \infty$ such that $F'(y_n) \to 0$.
Let $x=y_n$ in the above and let $n \to \infty$ to deduce that
  \begin{align*}
    0 = \frac{1}{2}(0-m)(0-m-1) -m =\frac{1}{2}m(m-1)<0 \,,
  \end{align*}
which is absurd.

  \smallskip

  Therefore, in all cases, we are led to contradictions.
  The proof is complete.
\end{proof}

\begin{proposition} \label{prop:equilibrium}
  Let $m = 1$. Then equation~\eqref{e:equilibrium} admits a Lipschitz, sublinear viscosity solution $F$ which satisfies $0\leq F(x) \leq mx$ for $x\in [0,\infty)$.
\end{proposition}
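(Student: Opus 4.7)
The plan is to exhibit an explicit smooth classical solution, which automatically qualifies as a viscosity solution. Setting $m=1$, equation~\eqref{e:equilibrium} rearranges to the quadratic $(F')^2 - 3F' + 2F/x = 0$ in $F'$, whose roots are $F' = \frac{3 \pm \sqrt{9 - 8F/x}}{2}$. Since $F(x)\leq x$ forces $9-8F/x \geq 1$, the square root is real. The side conditions $F(0)=0$ and $0\leq F \leq x$ pin $F'(0) = \lim_{x\to 0^+} F(x)/x \in [0,1]$, which selects the minus branch
\[
F'(x) = \tfrac{1}{2}\bigl(3 - \sqrt{9-8F(x)/x}\bigr).
\]

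To solve this, I use the self-similar substitution $G(x):=F(x)/x$, which converts the ODE into $xG' = \tfrac{1}{2}(3-2G-\sqrt{9-8G})$. The further substitution $u := \sqrt{9-8G} \in [1,3)$ produces the separable equation $x u\, u' = \tfrac{1}{2}(u-1)(3-u)$. Partial fractions give $\frac{2u}{(u-1)(3-u)} = \frac{1}{u-1}+\frac{3}{3-u}$, and integrating yields the implicit relation
\[
\frac{u-1}{(3-u)^3} = K x
\]
for an arbitrary constant $K>0$. Since the left-hand side is a smooth strictly increasing bijection of $[1,3)$ onto $[0,\infty)$, this defines a unique smooth $u:[0,\infty)\to[1,3)$ with $u(0)=1$ and $u(x)\to 3^-$ as $x\to\infty$. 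I then set $F(x) := x\bigl(9-u(x)^2\bigr)/8$.

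It remains to verify the four desired properties. By the inverse function theorem applied to $u \mapsto (u-1)/(3-u)^3$, $u$ is smooth on $[0,\infty)$, hence so is $F$; a local expansion gives $u(x) = 1 + 8Kx + O(x^2)$, so $F(x) = x - 2Kx^2 + O(x^3)$ near the origin, whence $F\in C^1([0,\infty))$ with $F'(0)=1$. Directly from the minus branch, $F'(x) = (3-u(x))/2 \in (0,1]$, so $F$ is globally Lipschitz with constant $1$. The ratio $F(x)/x = (9-u(x)^2)/8$ lies in $(0,1]$ and tends to $0$ as $x\to\infty$, giving both $0\leq F(x)\leq x$ and sublinearity (in fact $F(x)\sim C_K x^{2/3}$ as $x\to\infty$). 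Finally, the function solves the equilibrium equation classically by construction, hence in the viscosity sense. The only conceptual subtlety is the singularity of the ODE at $x=0$, which manifests as a one-parameter family of equilibria indexed by $K > 0$; the implicit representation sidesteps this by defining $F$ globally in closed form, and any single choice of $K$ proves the proposition.
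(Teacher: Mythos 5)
Your construction is correct, and up to a change of variable it produces the same transcendental equation as the paper: your implicit relation $\frac{u-1}{(3-u)^3}=Kx$ with $u=\sqrt{9-8F/x}$ is exactly the paper's $\frac{G(x)}{(1-G(x))^3}=Cx$ with $G=1-\partial_x F$, since $G=\frac{u-1}{2}$ and $1-G=\frac{3-u}{2}$ yield $\frac{G}{(1-G)^3}=\frac{4(u-1)}{(3-u)^3}$. The difference is in how each argument closes. The paper sets $G=1-\partial_x F$, reduces to the integro-differential equation $\tfrac12 G(G+1)-\tfrac1x\int_0^x G=0$ from Degond--Liu--Pego, and then \emph{imports} from that reference the representation of $G$ as a Bernstein transform $G(x)=\int_0^\infty(1-e^{-sx})\gamma(s)e^{-4s/27}\,ds$; the properties $0\leq\partial_x F\leq 1$, sublinearity, and (as a bonus not needed for the proposition) complete monotonicity of $\partial_x F$ then come from that representation. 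You instead treat $(F')^2-3F'+2F/x=0$ as a quadratic, select the branch $F'=\tfrac12(3-\sqrt{9-8F/x})$, and substitute $G=F/x$ and $u=\sqrt{9-8G}$ to obtain a separable ODE whose closed-form implicit solution lets you read off smoothness of $u$ by the inverse function theorem (note $\phi(u)=(u-1)/(3-u)^3$ has $\phi'(u)=2u/(3-u)^4>0$, nonzero even at $u=1$, so regularity at $x=0$ is automatic), the bound $F'=(3-u)/2\in(0,1]$, the envelope $0\leq F\leq x$, and sublinearity from $u\to 3^-$ (indeed $F\sim\text{const}\cdot x^{2/3}$). Since the resulting $F$ is smooth and classical on $(0,\infty)$, it is automatically a viscosity solution. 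Your route is more self-contained — it establishes everything the proposition asks for without invoking the Bernstein-function machinery — at the cost of not directly recording that $F$ is itself a Bernstein function, which the paper's version does give and which is thematically relevant elsewhere in the paper (though not required by the proposition). Your closing remark about the one-parameter family indexed by $K$ corresponds precisely to the scaling family $F_\lambda(x)=\lambda F(x/\lambda)$ in Remark~\ref{rem:family}.
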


\begin{proof}
  Let $G = 1- \partial_x F$. Then the equilibrium equation reads as
  \begin{equation}\label{eq:G}
    \frac{1}{2} G(G+1) - \frac{1}{x} \int_0^x G = 0 \,.
  \end{equation}
  This is the same equation studied in the work of Degond, Liu and Pego~\cite[Section 5]{DegondLiuEA17}, of which the solution must satisfy the transcendental equation
  \begin{equation}\label{eq:G-1}
    \frac{G(x)}{(1 - G(x))^3} = Cx\,, 
  \end{equation}
  for some constant $C>0$.
  Let us recall a quick proof of \eqref{eq:G-1}.
  Multiply \eqref{eq:G} by $x$, then differentiate the result with respect to $x$ to imply
  \[
  \frac{1}{2}G(G+1) + \frac{1}{2}x(2G \partial_x G + \partial_x G) - G=0\,,
  \]
  which means that
  \[
  \frac{1}{x}=\frac{3 \partial_x G}{1-G} + \frac{\partial_x G}{G}\,.
  \]
  Integrate the above to yield \eqref{eq:G-1}.
  Therefore, we can pick $C=1$ in \eqref{eq:G-1} and $G$ to be  a Bernstein function taking the form
  \begin{equation}\label{eq:formula G}
    G(x) = \int_0^\infty (1 - e^{-sx})\gamma(s) e^{-4s/27} \, ds\, , 
  \end{equation}
  where 
  \begin{equation*}
    \int_0^\infty \gamma(s) e^{-4s/27} \, ds = 1\,.
  \end{equation*}
  See \cite[Section 5]{DegondLiuEA17} for further details on the derivation of \eqref{eq:formula G}.
  This implies that 
  \begin{equation}
    \partial_x F (x)= 1 - \int_0^\infty (1 - e^{-sx})\gamma(s) e^{-4s/27} \, ds \geq 0\, ,
  \end{equation}
  and that
  \begin{equation*}
    \lim_{x\to \infty} \frac{F(x)}{x} =  0 \,.
  \end{equation*}
  Furthermore, by successive differentiations, we can also see that $\partial_x F$ is completely monotone, that is, $(-1)^{n+1}\partial^n_x F \geq 0$ for all $n\in \N$, which means that $F$ is a Bernstein function.
\end{proof}

\begin{remark} \label{rem:family}
From the above proposition, it is actually not hard to see that, for $m=1$, equation~\eqref{e:equilibrium} admits a family of Lipschitz, sublinear viscosity solution $\{F_\lambda\}_{\lambda>0}$ which satisfies $0\leq F_\lambda(x) \leq x$ for $x\in [0,\infty)$.
Indeed, take $F$ as in the above proof, and denote by
\[
F_\lambda(x) = \lambda F\left(\frac{x}{\lambda}\right) \quad \text{ for all } x \in [0,\infty)\,.
\]
Then,
\[
G_\lambda(x) = 1 - \partial_x F_\lambda(x) = 1 - \partial_x F \left(\frac{x}{\lambda}\right) = G \left(\frac{x}{\lambda}\right)\,,
\]
which means that
\[
\frac{G_\lambda(x)}{(1-G_\lambda(x))^3}= \frac{x}{\lambda}\,.
\]
This implies that \eqref{eq:G-1} is satisfied with $C=\frac{1}{\lambda}$.
Hence, $F_\lambda$ is a solution to \eqref{e:equilibrium} for each $\lambda>0$.

The existence of this family of solutions $\{F_\lambda\}_{\lambda>0}$ to \eqref{e:equilibrium} makes the study of large time behavior of the viscosity solution to \eqref{e:main} for $m=1$ quite difficult.
\end{remark}

\section{Large time behavior for \texorpdfstring{$0<m < 1$}{ m <1 }}
In this section, we study the large time behavior of the viscosity solution to equation~\eqref{e:main} for $0<m<1$.
Our goal is to prove Theorem \ref{t:largeTimeLessThan1}.

From Proposition~\ref{prop:nonExistenceSublinearSoln}, one cannot expect a sublinear equilibrium, that is, a Lipschitz sublinear solution to \eqref{e:equilibrium}. 
However, it is very interesting that the solution to equation~\eqref{e:main} still converges to the  linear function $mx$ locally uniformly as $t\to \infty$. 
This implies that, even if we have a mass-conserving solution at all time, the sizes of particles decrease until they become dust at time infinity.

To prove the theorem, we need the following results.

\begin{lemma}\label{l:characterStationaryLessThan1}
  Let $\bar F$ be a viscosity supersolution to  equation \eqref{e:equilibrium} that satisfies the following
  \begin{equation} \label{con:concave}
    \begin{cases}
      \text{ $\bar F$ is concave} \,, \\
      \liminf_{x\to\infty} \frac{\bar F(x)}{x} > 0 \,,  \\
      0\leq \bar F(x) \leq mx \,.
  \end{cases}
  \end{equation}

  Then, $\bar F(x) = mx$.
\end{lemma}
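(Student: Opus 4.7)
The plan is to exploit concavity to reduce the supersolution condition to a pointwise inequality almost everywhere, then pass to the limit $x\to\infty$ to identify the slope at infinity, and finally use concavity one more time to upgrade the slope-at-infinity information into a global identity.

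First, I observe that $0 \le \bar F(0)\le m\cdot 0 = 0$, so $\bar F(0)=0$. Since $\bar F$ is concave on $[0,\infty)$ with $\bar F(0)=0$ and is bounded between $0$ and $mx$, standard facts about concave functions give: $\bar F$ is locally Lipschitz and differentiable a.e.; the a.e. derivative $\bar F'$ is nonincreasing; the ratio $\bar F(x)/x$ is nonincreasing on $(0,\infty)$; and for every differentiability point $x>0$ one has $\bar F'(x)\le \bar F(x)/x\le m$ (the first inequality follows from the tangent line at $x$ lying above the graph, evaluated at $0$). Consequently both $\bar F'(x)$ and $\bar F(x)/x$ decrease to the same common limit
\[
L := \lim_{x\to\infty}\frac{\bar F(x)}{x} = \lim_{x\to\infty}\bar F'(x)\in (0,m],
\]
where positivity comes from the hypothesis $\liminf_{x\to\infty}\bar F(x)/x>0$.

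Next, at any point $x$ where $\bar F$ is differentiable, the only $C^1$ test function touching $\bar F$ from below at $x$ has slope $\bar F'(x)$ (all other admissible slopes from below are excluded by concavity); at nondifferentiable points, there are no $C^1$ functions touching from below at all. Therefore the viscosity supersolution condition on $\bar F$ is equivalent to the pointwise inequality
\[
\tfrac12(\bar F'(x)-m)(\bar F'(x)-m-1) + \frac{\bar F(x)}{x} - m \ge 0
\]
at every differentiability point $x>0$. Letting $x\to\infty$ along such points yields
\[
\tfrac12(L-m)(L-m-1) + (L-m) \ge 0,
\]
which I rewrite as $(L-m)(L-m+1)\ge 0$. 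Since $L\le m$ gives $L-m\le 0$ and $0<m<1$ gives $L-m+1 = L+(1-m)>0$, the only possibility is $L=m$.

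Finally I bootstrap. Because $\bar F'$ is nonincreasing with limit $m$ at infinity, $\bar F'(x)\ge m$ at every differentiability point; combined with the concavity inequality $\bar F'(x)\le \bar F(x)/x\le m$ established above, this forces $\bar F'(x)=m$ a.e., whence $\bar F(x)=mx$ by integration and $\bar F(0)=0$. The only step requiring genuine care, in my view, is the reduction of the viscosity supersolution condition to a pointwise a.e. inequality; this uses the structure of concave functions in an essential way (namely, that the subdifferential from below is empty at nondifferentiable points), but is otherwise routine. Everything else is elementary calculus with concave functions and the sign analysis of a quadratic.
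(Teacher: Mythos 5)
Your proof is correct and takes essentially the same route as the paper: concavity makes the viscosity supersolution condition reduce to the pointwise inequality at differentiability points, sublinearity identifies the common limit $\alpha$ of $\bar F'$ and $\bar F(x)/x$, and the sign analysis of $\tfrac12(\alpha-m)(\alpha-m+1)$ forces $\alpha=m$, from which $\bar F=mx$ follows by concavity. Your writeup is somewhat more detailed than the paper's — in particular you spell out explicitly why the supersolution test at nondifferentiability points of a concave function is vacuous, and you carry out the final bootstrap from $\alpha=m$ to $\bar F(x)=mx$ rather than asserting it — but the underlying argument is the same.
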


\begin{proof}
  First, observe that $x\mapsto \partial_x \bar F(x)$ is decreasing whenever $\partial_x \bar F(x)$ is defined. By the requirement that 
  \begin{equation*}
    \liminf_{x\to\infty} \frac{\bar F(x)}{x} >0 \,,
  \end{equation*}
  we have that $\partial_x \bar F(x) \geq 0$.
  As $\bar F$ is differentiable almost everywhere, pick $\{x_n\} \to \infty$  so that $F$ is differentiable at $x_n$ for all $n\in \N$. 
   Denote
  \begin{equation*}
    0 < \alpha \defeq \lim_{n\to\infty} \partial_x \bar F(x_n) = \lim_{x\to\infty} \frac{\bar F(x_n)}{x_n} \leq m \,.
  \end{equation*}
  Thus, letting $x_n\to\infty$ in the equation~\eqref{e:equilibrium}, we get 
  \begin{equation*}
    0 \leq \frac{1}{2} (\alpha - m)(\alpha - m+1) \leq 0 \,.
  \end{equation*}
  Therefore, it is necessary that $ \alpha = m$ and $\bar F(x) = mx$ for all $x\in [0,\infty)$.
\end{proof}

We immediately have the following consequence.
\begin{corollary}
  Let $\bar F$ be a viscosity solution to equation~\eqref{e:equilibrium} satisfying \eqref{con:concave}. Then $\bar F(x) = mx$ for $x\in [0,\infty)$.
\end{corollary}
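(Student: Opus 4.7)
The plan is essentially to invoke Lemma~\ref{l:characterStationaryLessThan1} directly, after observing that the hypothesis of the Corollary is strictly stronger than what that lemma requires. By the standard definition of viscosity solutions, any viscosity solution to \eqref{e:equilibrium} is in particular a viscosity supersolution. Hence if $\bar F$ is a viscosity solution to \eqref{e:equilibrium} satisfying \eqref{con:concave}, then $\bar F$ is automatically a viscosity supersolution satisfying \eqref{con:concave}.

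Applying Lemma~\ref{l:characterStationaryLessThan1} therefore yields $\bar F(x) = mx$ on $[0,\infty)$, which is exactly the assertion of the Corollary.

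There is no real obstacle here; the Corollary is a clean restatement of the Lemma in the smaller solution class, and the delicate information was already packed into the proof of the Lemma (extracting a limiting slope $\alpha \in (0,m]$ at infinity from concavity and sublinear growth, then feeding it into the equation to force $\alpha = m$). The reason to isolate it as a separate statement is that in the large-time analysis carried out in Section~5, the natural object is an accumulation point of the evolution, which one expects to be a genuine solution (not merely a supersolution) of \eqref{e:equilibrium}; having the Corollary on the shelf lets us conclude rigidity ($\bar F \equiv mx$) without re-invoking the full supersolution argument.
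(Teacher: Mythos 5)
Your argument is correct and matches the paper exactly: the paper presents the Corollary as an immediate consequence of Lemma~\ref{l:characterStationaryLessThan1}, with no separate proof, precisely because a viscosity solution is in particular a viscosity supersolution. Nothing further is needed.
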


\begin{lemma} \label{l:lowerBoundLessThan1}
  Let $F$ be the Lipschitz, sublinear viscosity solution to equation~\eqref{e:main}. Then, locally uniformly for $x\in [0,\infty)$,
  \begin{equation}\label{ine:lowerBoundLessThan1}
    \liminf_{t\to\infty} F(x,t) \geq \frac{1}{4} m(1-m) x \,.
  \end{equation}
\end{lemma}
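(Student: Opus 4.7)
The plan is to construct an explicit Lipschitz viscosity subsolution of \eqref{e:main} whose long-time limit captures the target bound, and then conclude via comparison. Set $\alpha \defeq \frac{m(1-m)}{4}$ and define
\[
\psi(x,t) \defeq \alpha \min\{x, t\} \qquad \text{on } [0,\infty)^2.
\]
Observe that $\psi$ is bounded and Lipschitz (hence sublinear in $x$), that $\psi(x, 0) = 0 \leq F_0(x)$, and that for each fixed $x \geq 0$ one has $\psi(x,t) = \alpha x$ as soon as $t \geq x$.

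The key step is verifying that $\psi$ is a viscosity subsolution of \eqref{e:main}. In the open region $\{0 < x < t\}$, where $\psi = \alpha x$ is smooth, a direct substitution gives
\[
\partial_t \psi + \tfrac{1}{2}(\partial_x \psi - m)(\partial_x \psi - m - 1) + \tfrac{\psi}{x} - m = \tfrac{(\alpha - m)(\alpha - m + 1)}{2} \leq 0,
\]
since $\alpha \in (0, m)$ and $\alpha - m + 1 > 0$. In the open region $\{x > t > 0\}$, where $\psi = \alpha t$,
\[
\partial_t \psi + \tfrac{1}{2}(\partial_x \psi - m)(\partial_x \psi - m - 1) + \tfrac{\psi}{x} - m = \alpha + \tfrac{m(m+1)}{2} + \tfrac{\alpha t}{x} - m,
\]
which is strictly bounded above by $2\alpha + \tfrac{m(m+1)}{2} - m = 0$, thanks to $t < x$ and the precise choice $\alpha = \tfrac{m(1-m)}{4}$. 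Along the kink $\{x = t > 0\}$, the superdifferential of $\psi$ at $(x_0, x_0)$ is the line segment $\{(p, q) : p + q = \alpha,\ p, q \in [0, \alpha]\}$; substituting $q = \alpha - p$ into the Hamilton--Jacobi expression and maximizing over $p \in [0, \alpha]$ gives a maximum value of $0$, attained at $p = 0$. Hence $\psi$ is a bona fide sublinear viscosity subsolution of \eqref{e:main}.

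The comparison argument underlying Lemma~\ref{lem:CP} adapts straightforwardly to the case of different initial data (since $\psi(\cdot, 0) \leq F(\cdot, 0)$, one compares against the cut-off equation~\eqref{e:cutoff} using classical parabolic comparison and then takes $n \to \infty$) and yields $\psi(x, t) \leq F(x, t)$ on $[0, \infty)^2$. Fixing $x \geq 0$ and taking $t \geq x$, we get $F(x, t) \geq \alpha x = \tfrac{m(1-m)}{4} x$, proving the pointwise lim inf bound. Local uniformity on compact subsets of $[0, \infty)$ follows because the bound holds uniformly in $x \in [0, R]$ for all $t \geq R$. The only delicate point is the subsolution verification at the kink; an alternative route, should one wish to avoid the superdifferential analysis, is to work with a smooth substitute such as $\psi_\varepsilon(x,t) = -\varepsilon \log(e^{-\alpha x/\varepsilon} + e^{-\alpha t/\varepsilon})$ (a smoothed $\min$) and pass to the limit $\varepsilon \to 0^+$.
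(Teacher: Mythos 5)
Your proof is correct and follows essentially the same route as the paper: construct the subsolution $\frac{m(1-m)}{4}\min\{x,t\}$, verify the subsolution property, and conclude by comparison. The only cosmetic difference is that you check the kink $\{x=t\}$ by an explicit superdifferential computation, whereas the paper invokes the standard fact that the minimum of two subsolutions is a subsolution when the Hamiltonian is convex in $p$ (which is exactly what your Jensen-type maximization over the segment $\{p+q=\alpha\}$ verifies by hand).
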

\begin{proof}
  We construct a sublinear subsolution to the equation~\eqref{e:main} so that the inequality~\eqref{ine:lowerBoundLessThan1} holds. 
  Define, for $(x,t) \in [0,\infty)^2$,
  \begin{equation*}
    \varphi(x,t) \defeq \min \left\{ \frac{1}{4} m(1- m)x, \frac{1}{4} m(1-m)t  \right\}\,.
  \end{equation*}
  To see that $\varphi$ is a subsolution to~\eqref{e:main}, we first note that $\frac{1}{4} m(1- m)x$ is a subsolution.
  Furthermore, 
  \begin{equation*}
    \varphi(x,t) = \begin{cases}
        \frac{1}{4} m(1-m) x \,, \quad  x <t \,, \\
        \frac{1}{4} m(1- m) t \,, \quad x\geq t  \,.
    \end{cases}
  \end{equation*}
  So, for $x > t$, 
  \begin{align*}
    & \partial_t \varphi + \frac{1}{2} ( \partial_x \varphi - m)(\partial_x \varphi - m - 1) + \frac{\varphi}{x} - m  \\
    \leq\, & \frac{1}{4} m(1-m) + \frac{1}{2} m(m-1) + \frac{1}{4} m(1-m) = 0 \,.
  \end{align*}
  Since equation~\eqref{e:main} has a convex Hamiltonian, minimum of two subsolutions is a subsolution (see Tran~\cite[Chapter 2]{Tran19} and the references therein). 
  Note that this property is not true for general Hamiltonians.
  
  By the comparison principle, we have that $F \geq \varphi$. 
  Letting $t \to \infty$, we obtain \eqref{ine:lowerBoundLessThan1} locally uniformly for $x\in [0,\infty)$.
\end{proof}

\begin{proof}[{\bf Proof of Theorem~\ref{t:largeTimeLessThan1}}]
  By Lemma~\ref{l:lowerBoundLessThan1}, locally uniformly for $x\in (0,\infty)$,
  \begin{equation*}
    m \geq \liminf_{t\to \infty} \frac{F(x,t)}{x}  \geq \frac{1}{4} m(1 - m) >0 \,.
  \end{equation*}
  Let 
  \begin{equation*}
    G(x) \defeq \liminf_{t\to \infty} F(x,t) \quad \text{ for all } x \in [0,\infty) \,.
  \end{equation*}
  This function is well-defined since $F$ is globally Lipschitz on $[0,\infty)^2$ and $0\leq F(x,t) \leq mx$.
  By stability of viscosity solutions, $G$ is a supersolution to equation
  \begin{equation*}
    \frac{1}{2} (\partial_x G - m) ( \partial_x G - m - 1) + \frac{G}{x} - m \geq 0 
  \end{equation*}
  in $(0,\infty)$.
  As $x \mapsto F(x,t)$ is concave for every $t \geq 0$, $G$ is concave. Moreover, $0 \leq G \leq mx$ and 
  \begin{equation*}
    G(x) \geq \frac{1}{4} m(1- m) x \quad \text{ for all } x\in [0,\infty) \,.
  \end{equation*}
  By Lemma~\ref{l:characterStationaryLessThan1}, $G(x) = mx$ for $x\in [0,\infty)$. 
  We use this and the fact that $F(x,t) \leq mx$ for all $(x,t) \in [0,\infty)^2$ to conclude that, locally uniformly for $x\in [0,\infty)$,
  \begin{equation*}
    \lim_{t\to\infty} F (x,t) = G(x) = mx \,,
  \end{equation*}
  as desired.
\end{proof}

It is worth noting that \eqref{ine:lowerBoundLessThan1} is only useful for $0<m<1$, and is meaningless when $m=1$.
Large time behavior of $F$ in case $m=1$ remains an open problem.

\appendix
\section{Bernstein functions and transform} \label{A:bernstein}
In this appendix, we record a representation theorem of Bernstein functions, which is important for the inference of existence of solutions to equation~\eqref{eq:CF} from smooth solution of equation~\eqref{e:main}. 
\begin{definition}
    A function $f: (0,\infty) \to [0,\infty)$ is a Bernstein function if $f \in C^\infty((0,\infty))$ and, for $n\in \N$,
    \begin{equation*}
        (-1)^{n+1} \frac{d^n}{dx^n} f \geq 0 \,.
    \end{equation*}
\end{definition}

\begin{theorem}\label{t:bernsteinRep}
    A function $f:(0,\infty) \to [0,\infty)$ is a Bernstein function if and only if it can be written as
    \begin{equation}\label{eq: rep f}
        f(x) = a_0 x + a_\infty + \int_{(0,\infty)} (1- e^{-sx}) \mu (ds), \qquad x\in (0,\infty) \,,
    \end{equation}
    where $a_0, a_\infty \geq 0$ and $\mu$ is a measure such that
    \begin{equation*}
        \int_{(0,\infty)} \min\set{ 1, s} \, \mu(ds) <\infty \,.
    \end{equation*}
\end{theorem}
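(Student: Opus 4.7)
The plan is to prove the two directions separately, with the easy direction being direct verification and the hard direction relying on the classical Bernstein--Hausdorff--Widder theorem for completely monotone functions.

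For the ``if'' direction, I would start with the representation \eqref{eq: rep f} and differentiate under the integral sign. The integrability hypothesis $\int_{(0,\infty)} \min\{1,s\}\,\mu(ds) < \infty$ together with the estimate $s^n e^{-sx} \leq C_{n,x}\min\{1,s\}$ (for each fixed $x>0$, since $s^n e^{-sx}$ behaves like $s^n$ near $0$ with $n\geq 1$ and decays exponentially at infinity) justifies differentiation and yields $f \in C^\infty((0,\infty))$ with $f'(x) = a_0 + \int_{(0,\infty)} s e^{-sx}\,\mu(ds)$ and, for $n\geq 2$, $f^{(n)}(x) = (-1)^{n+1}\int_{(0,\infty)} s^n e^{-sx}\,\mu(ds)$. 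All sign conditions $(-1)^{n+1} f^{(n)} \geq 0$ follow immediately, and $f \geq 0$ is clear from $a_0, a_\infty \geq 0$ and $1-e^{-sx}\geq 0$.

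For the ``only if'' direction, the central observation is that if $f$ is Bernstein, then $g := f'$ is \emph{completely monotone}: indeed $(-1)^n g^{(n)} = (-1)^n f^{(n+1)} = (-1)^{(n+1)+1} f^{(n+1)} \cdot (-1)^{2n+2} \geq 0$. By the Bernstein--Hausdorff--Widder theorem, there exists a unique Radon measure $\nu$ on $[0,\infty)$ such that
\begin{equation*}
f'(x) = \int_{[0,\infty)} e^{-sx}\,\nu(ds) \qquad \text{for all } x>0.
\end{equation*}
Since $f \geq 0$ and $f' \geq 0$, the monotone limit $a_\infty \defeq \lim_{x\to 0^+} f(x) \in [0,\infty)$ exists. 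By Fubini--Tonelli, for $x>0$,
\begin{equation*}
f(x) - a_\infty = \int_0^x f'(y)\,dy = \nu(\{0\})\,x + \int_{(0,\infty)} \frac{1-e^{-sx}}{s}\,\nu(ds).
\end{equation*}
Setting $a_0 \defeq \nu(\{0\})$ and $\mu(ds) \defeq \tfrac{1}{s}\,\nu|_{(0,\infty)}(ds)$ gives the desired representation \eqref{eq: rep f}.

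It remains to verify $\int_{(0,\infty)}\min\{1,s\}\,\mu(ds) < \infty$. On $(0,1]$ this reduces to $\nu((0,1]) < \infty$, which holds because $\nu$ is a Radon measure, finite on compacts of $[0,\infty)$. On $(1,\infty)$ we need $\int_{(1,\infty)} s^{-1}\,\nu(ds) < \infty$; since $(1-e^{-s})/s \geq (1-e^{-1})/s$ for $s \geq 1$, the finiteness of $f(1) - a_\infty = a_0 + \int_{(0,\infty)} \tfrac{1-e^{-s}}{s}\,\nu(ds)$ yields the required bound. The main obstacle in this proof is really the invocation of the Bernstein--Hausdorff--Widder theorem itself, which is a deep classical representation result; everything else amounts to identifying the pieces of $\nu$ (its atom at $0$ giving $a_0$, its restriction to $(0,\infty)$ giving $\mu$) and keeping careful track of the integrability near $s=0$ and $s=\infty$ via the single quantity $\min\{1,s\}$.
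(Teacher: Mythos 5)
Your proof is correct. The paper does not supply its own argument for this theorem but instead cites Schilling, Song, and Vondra\v{c}ek, and the route you take — observing that $f'$ is completely monotone, invoking Bernstein--Hausdorff--Widder to obtain $f'(x)=\int_{[0,\infty)}e^{-sx}\,\nu(ds)$, integrating in $x$ via Tonelli to split off the atom $a_0=\nu(\{0\})$, and extracting $\mu(ds)=s^{-1}\nu|_{(0,\infty)}(ds)$ with the integrability check near $0$ (Radon property of $\nu$) and near $\infty$ (comparison with the finite value $f(1)-a_\infty$) — is precisely the standard textbook proof found in that reference.
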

In other words, a Bernstein function is a Bernstein transform on the extended real line $[0,\infty]$.
The proof of this theorem and more beautiful properties of Bernstein functions and transform could be found in the book by Schilling, Song, and Vondra\v{c}ek~\cite{SchillingSong12}.

\medskip

Next, consider $f:[0,\infty) \to [0,\infty)$ which is a Bernstein function such that $f(0)=0$ and $f$ is sublinear.  
By Theorem \ref{t:bernsteinRep}, $f$ has the representation formula \eqref{eq: rep f}.
Firstly, let $x\to 0^+$ to get that
\[
a_\infty = \lim_{x \to 0^+} f(x) =0\,.
\]
Secondly, divide \eqref{eq: rep f} by $x$, let $x \to \infty$ and use the sublinearity of $F$ to yield further that
\[
a_0=\lim_{x \to \infty} \frac{f(x)}{x}=0\,.
\]
Thus, under two additional conditions that $f(0)=0$ and $f$ is sublinear, we get that $a_0=a_\infty=0$, and therefore,
\[
f(x) = \int_{(0,\infty)} (1- e^{-sx}) \mu (ds), \qquad x\in (0,\infty) \,.
\]

\section*{Acknowledgement}
TSV thanks Bob Pego and Philippe Lauren\c{c}ot for the introduction to the C-F as well as many helpful discussions. 
Proposition~\ref{prop:equilibrium} and Remark~\ref{rem:family} are based on an observation of Lauren\c{c}ot through private communications.
TSV also thanks Hausdorff Research Institute for Mathematics
(Bonn), through the Junior Trimester Program on Kinetic Theory, for their welcoming environment during the period that he learned most about the questions about C-F addressed in this paper.
HT thanks Tuoc Phan for a discussion on regularity of degenerate parabolic equations, and Hiroyoshi Mitake for some useful suggestions.
We also would like to thank the anonymous referee very much for carefully reading our manuscript and giving very helpful comments to improve its presentation.
HT is supported in part by NSF grant DMS-1664424 and NSF CAREER grant DMS-1843320. 

\printbibliography

\end{document}